\newtheorem{Th}{Theorem}[section]
\newtheorem{Rem}[Th]{Remark}
\newtheorem{Ex}[Th]{Example}
\newtheorem{Lemma}[Th]{Lemma}
\newtheorem{Def}[Th]{Definition}
\newtheorem{Prop}[Th]{Proposition}
\newtheorem{Cor}[Th]{Corollary}
\renewcommand{\section}%
   {\setcounter{equation}{0}\@startsection {section}{1}{\z@}{-3.5ex plus -1ex
  minus -.2ex}{2.3ex plus .2ex}{\Large\bf}}
\def\supp{\mathop{\rm supp}\nolimits}
\def\singsupp{\mathop{\rm sing\, supp}\nolimits}
\def\Re{\mathop{\rm Re}\nolimits}
\def\Im{\mathop{\rm Im}\nolimits}
\def\grad{\mathop{\rm grad}\nolimits}
\def\ds{\displaystyle}
\def\R{\mathbb R}
\def\C{\mathbb C}
\def\N{\mathbb N}
\def\Q{\mathbb Q}
\def\WF{\mathop{\rm WF}\nolimits}
\def\proj{\mathop{\mbox{\rm proj}}}
\def\ind{\mathop{\mbox{\rm ind}}}
\newcommand{\D}{\mathcal{D}}
\newcommand{\E}{\mathcal{E}}
\newcommand{\F}{\mathcal{F}}
\newcommand{\afrac}[2]{\genfrac{}{}{0pt}{1}{#1}{#2}}
\newcommand{\beqsn}{\arraycolsep1.5pt\begin{eqnarray*}}
\newcommand{\eeqsn}{\end{eqnarray*}\arraycolsep5pt}
\newcommand{\beqs}{\arraycolsep1.5pt\begin{eqnarray}}
\newcommand{\eeqs}{\end{eqnarray}\arraycolsep5pt}
\title{A characterization of the wave front set defined by the iterates
of an operator with constant coefficients}
\author{Chiara Boiti}
\address{
Dipartimento di Matematica e Informatica \\Universit\`a di Ferrara\\
Via Ma\-chia\-vel\-li n.~30\\
44121 Ferrara\\
Italy}
\email{chiara.boiti@unife.it}
\author{David Jornet}
\address{
Instituto Universitario de Matem\'atica Pura y Aplicada IUMPA\\
Universitat Po\-li\-t\`ecni\-ca de Val\`encia\\
C/Camino de Vera, s/n\\
E-46071 Valencia\\
Spain}
\email{djornet@mat.upv.es}
\begin{document}

\begin{abstract}
We characterize the wave front set
$\WF^P_\ast(u)$ with respect to the iterates of a linear partial differential
operator with constant coefficients of a classical distribution
$u\in\D'(\Omega)$, $\Omega$ an open subset in $\R^n$. We use recent Paley-Wiener theorems
for generalized ultradifferentiable classes in the sense of Braun, Meise and
Taylor. We also give several examples and applications to the regularity of
operators with variable coefficients and constant strength. Finally,
we construct a distribution with prescribed wave front set of this type.
\end{abstract}

\maketitle
% computing hour and minutes
%  \newcount\minutes
%  \newcount\hour
%  \minutes=\time
%  \divide\minutes by 60
%  \hour=\minutes
%  \minutes=\time
%  \multiply \hour by 60
%  \advance \minutes by -\hour
%  \divide \hour by 60
%  \markboth{\today\space\number\hour :\number\minutes}%
%  {\today\space\number\hour :\number\minutes}
\markboth{\sc A characterization of the wave front set\ldots}
{\sc C.~Boiti and D.~Jornet}

\vspace{3mm}
\noindent {\em keywords\,}: {Iterates of an operator, wave front set,
ultradifferentiable functions.}

\noindent {\em 2010 Mathematics Subject Classification\,}:
{35A18, 35A20, 35A21.}

\section{Introduction}

We introduced in \cite{BJJ} the wave front set $\WF^{P}_{*}(u)$, for a classical distribution $u\in\D'(\Omega)$  defined on a open set $\Omega$ of $\R^{n}$, with respect to the
iterates of a hypoelliptic linear partial differential operator $P$
with constant coefficients for ultradifferentiable classes in the sense of
Braun, Meise and Taylor \cite{BMT}.  We established
in \cite{BJJ}
a microlocal regularty theorem for this wave front set and we studied the
product of ultradifferentiable functions defined in the usual way with the
ones defined by iterates. As a consequence we obtained a partial result
related to the construction of distributions with prescribed wave front sets.
Here, we describe more precisely the behaviour of the set $\WF^{P}_{*}(u)$ and
complete the previous result about prescribed singularities. Finally,
we give some applications to the $\omega$-micro-regularity of linear partial
differential operators with variable coefficients and constant strength.

The problem of iterates begins mainly when Komatsu~\cite{K1} in 1960s
characterized
analytic functions $f$ in
terms of the behaviour  of
successive iterates $P(D)^jf$ of a partial differential elliptic
operator $P(D)$ with constant coefficients,
proving that a $C^\infty$ function $f$ is real analytic
in $\Omega$ if and only if for every compact set $K\subset\subset\Omega$
there is a constant $C>0$ such that
\beqsn
\|P(D)^jf\|_{2,K}\leq C^{j+1}(j!)^m,
\eeqsn
where $m$ is the order of the operator and $\|\cdot\|_{2,K}$ is the $L^2$ norm
on $K$. This result was generalized for elliptic operators with variable
analytic
coefficients by Kotake and Narasimhan~\cite[Theorem 1]{KN}. Later this
result was
extended to the setting of Gevrey functions by Newberger and Zielezny~\cite{NZ}
and completely characterized by M\'etivier~\cite{metivier1978propiete}
(see also \cite{Z1}). Spaces of Gevrey type given by the iterates of a
differential operator are called {\em generalized Gevrey classes} and were used
by Langenbruch~\cite{langenbruch1979P,langenbruch1979Fortsetzung,langenbruch1985on,langenbruch1987bases} for different purposes. For more references about
generalized Gevrey classes and the microlocal version of the problem see
\cite{BJJ}.

More recently, Juan Huguet~\cite{juan-huguet2010iterates} extended the
results of Komatsu~\cite{K1}, Newberger and Zielezny~\cite{NZ} and
M\'etivier~\cite{metivier1978propiete} to the setting of non-quasianalytic
classes in the sense of Braun, Meise and Taylor~\cite{BMT}. In
\cite{juan-huguet2010iterates}, Juan Huguet introduced the generalized
spaces of ultradifferentiable functions $\E_{*}^P(\Omega)$ on an open subset
$\Omega$ of $\R^{n}$ for a fixed linear partial differential operator $P$
with constant coefficients, and proved that these spaces are complete if
and only if  $P$ is hypoelliptic. Moreover Juan Huguet showed that, in this
case, the spaces are nuclear. Later, the same author in \cite{J} established
a Paley-Wiener theorem for the classes $\E^{P}_{*}(\Omega)$, again under
the hypothesis of the hypoellipticity of $P$.

In order to remove the assumption on the hypoellipticity of the operator,
we considered in \cite{BJ} a different setting of ultradifferentiable functions,
following the ideas of \cite{BCM}.

The structure of the paper is as follows. We begin in Section 2 with some
notation and preliminaries. First, we introduce the classes of ultradifferentiable functions. Then we complete some partially known results on linear
partial differential operators with constant coefficients regarding
$\omega$-regularity that we will use in the last section. In Section 3 we
use Paley-Wiener theorems in \cite{J} to characterize the wave front set
$\WF_*^P(u)$ introduced in \cite{BJJ} (see Corollaries \ref{corWFB} and
\ref{corWFR}). The main tools to establish this characterization are
\cite[Proposition 17]{BJJ}, in which we proved that the product of a suitable
Gevrey function and a function in $\E^P_*(\Omega)$ is still in
$\E^P_*(\Omega)$ (observe that $\E^P_*(\Omega)$ is not an algebra
for pointwise multiplication in general), and the application of  pseudodifferential
operators defined by symbols supported in a given cone to the description
of the wave front set in Theorem~\ref{thm1} (see \cite[Proposition 3.4.4]{R}
for the corresponding result in Gevrey classes).  In the last section,
Section 4,
we give some applications and examples, in particular to operators with
variable
coefficients and constant strength. For this purpose we employ
some known results on
$\omega$-micro-regularity of operators with constant strength
(see \cite{FGJ1,FGJ}).

\section{Notation and preliminaries}

Let us recall from \cite{BMT} the definitions of weight functions $\omega$
and of the spaces of ultradifferentiable functions
of Beurling and Roumieu type:

\begin{Def}
\label{defweight}
A non-quasianalytic {\em weight function} is a continuous increasing function
$\omega:\ [0,+\infty[\to[0,+\infty[$ with the following properties:
\begin{itemize}
\item[$(\alpha)$]
$\exists\ L>0$ s.t. $\omega(2t)\leq L(\omega(t)+1)\quad\forall t\geq0$;
\item[$(\beta)$] $\int_1^{+\infty}\frac{\omega(t)}{t^2}dt<+\infty,$

\item[$(\gamma)$]
$\log(t)=o(\omega(t))$ as $t\to+\infty$;
\item[$(\delta)$]
$\varphi_\omega:\ t\mapsto\omega(e^t)$ is convex.
\end{itemize}

\end{Def}

Normally, we will denote $\varphi_{\omega}$ simply by $\varphi$.

For a weight function $\omega$ we define
$\overline{\omega}:\C^n\rightarrow[0,+\infty[$ by
$\overline{\omega}(z):=\omega(|z|)$ and again we denote this function by
$\omega$.

The {\em Young conjugate} $\varphi^*:\ [0,+\infty[\to
[0,+\infty[$ is defined by
$$
\varphi^*(s):=\sup_{t\geq0}\{st-\varphi(t)\}.
$$
There is no  loss of generality to assume that $\omega$ vanishes on $[0,1]$.
Then $\varphi^*$ has only non-negative values, it is convex, $\varphi^*(t)/t$
is increasing and tends to $\infty$ as $t\rightarrow \infty$, and
$\varphi^{**}=\varphi$.

\begin{Ex}{\rm The following functions are, after a
change
in some interval $[0,M]$,  examples of weight functions:\\
\noindent (i) $\omega (t)=t^d$ for $0<d<1.$ \\ \noindent (ii)
$\omega (t)=\left(\log(1+t)\right)^s$, $s>1.$ \\ \noindent (iii)
$\omega(t)=t(\log(e+t))^{-\beta}$, $\beta>1.$ \\ \noindent (iv)
$\omega(t)=\exp(\beta (\log(1+t))^{\alpha})$, $0<\alpha<1.$}
\end{Ex}
In what follows, $\Omega$ denotes an arbitrary subset of $\R^n$ and
$K\subset\subset\Omega$ means that $K$ is a compact subset in $\Omega$.

\begin{Def}\label{ultradifclases}
{\rm Let $\omega$  be a weight function.\\

(a) For a compact subset $K$ in $\R^n$ which coincides with the closure of
its interior and $\lambda>0$, we define the seminorm
$$
p_{K,\lambda}(f):=\displaystyle\sup_{x\in K}\sup_{\alpha\in
    \N_0^n}\left|f^{(\alpha)}(x)\right|
\exp\left(-\lambda\varphi^*\left(\frac{|\alpha|}{\lambda}\right)\right),
$$
where $\N_0:=\N\cup\{0\}$,
and set
$$\mathcal{E}_{\omega}^{\lambda}(K):= \{ f \in C^\infty(K):
p_{K ,\lambda}(f)<\infty\},$$
which is a Banach space endowed with the $p_{K,\lambda}(\cdot)$-topology.\\

(b) For an open subset $\Omega$ in $\R^n$, the class of
\emph{$\omega$-ultradifferentiable functions of Beurling type} is defined by
$$\mathcal{E}_{(\omega)}(\Omega):= \{ f \in C^\infty(\Omega):
p_{K ,\lambda}(f)<\infty , \, \mbox{for every } \, \,
K\subset \subset \Omega \, \, \mbox{and every}\,  \lambda
>0\}.$$
The topology of this space is
$$ \mathcal{E}_{(\omega)}(\Omega)=
\proj_{\stackrel{\longleftarrow}{K\subset\subset\Omega}}
\proj_{\stackrel{\longleftarrow}{\lambda>0}}
\mathcal{E}_{\omega}^{\lambda}(K),$$
and one can show that $\mathcal{E}_{(\omega)}(\Omega)$ is a Fr\'{e}chet space.\\

(c) For a compact subset $K$ in $\R^n$ which coincides with the closure of
its interior and $\lambda>0$, set
$$\mathcal{E}_{\{\omega\}}(K)=\{ f \in C^\infty(K):\mbox{ there exists }m\in\mathbb{N}\mbox{ such that }
p_{K ,\frac{1}{m}}(f)<\infty\}, $$
This space is the strong dual of a nuclear Fr\'{e}chet space (i.e.,
a (DFN)-space) if it is endowed with its natural inductive limit topology,
that is,
$$\mathcal{E}_{\{\omega\}}(K)=
\ind_{\stackrel{\longrightarrow}{m\in\mathbb{N}}}\mathcal
{E}_{\omega}^{\frac{1}{m}}(K).$$

(d) For an open subset $\Omega$ in $\R^n$, the class of
{\it $\omega$-ultradifferentiable functions of Roumieu type} is defined by:
$$\mathcal{E}_{\{\omega\}}(\Omega):= \{ f \in
C^\infty(\Omega):\,\forall  K\subset \subset \Omega\mbox{ } \exists \lambda
>0\mbox{ such that }  p_{K,\lambda}(f)<\infty\}.$$
Its topology is the following
$$
\mathcal{E}_{\{\omega\}}(\Omega)=
\proj_{\stackrel{\longleftarrow}{K\subset\subset\Omega}}\mathcal{E}_{\{\omega\}}(K),
$$
that is, it is endowed with the topology of the projective limit of the
spaces $\mathcal{E}_{\{\omega\}}(K)$ when $K$ runs the compact subsets of
$\Omega$. This is a complete PLS-space, that is, a complete space which is
a projective limit of LB-spaces (i.e., a countable inductive limit of
Banach spaces) with compact linking maps in the (LB)-steps. Moreover,
$\mathcal{E}_{\{\omega\}}(\Omega)$ is also a nuclear and reflexive locally
convex space. In particular, $\mathcal{E}_{\{\omega\}}(\Omega)$ is an
ultrabornological (hence barrelled and bornological) space. }
\end{Def}

The elements of
${\mathcal E}_{(\omega )}(\Omega)$ {\rm (}resp. ${\mathcal
E}_{\{\omega\}}(\Omega)${\rm )} are called ultradifferentiable functions
of Beurling type {\rm (}resp. Roumieu type{\rm )} in $\Omega.$

In the case that $\omega (t):=
t^d$ ($0<d<1$), the corresponding Roumieu class is the Gevrey class with
exponent $1/d.$ In the limit case $d=1$, not included in our setting, the
corresponding Roumieu class ${\mathcal
E}_{\{\omega\}}(\Omega)$ is the space of real analytic functions on $\Omega$
whereas the Beurling class ${\mathcal
E}_{(\omega)}({\mathbb R}^n)$ gives the entire functions.

 If a statement holds in the Beurling and the Roumieu case then we will use
the notation $\mathcal{E}_{*}(\Omega)$. It means that in all
cases * can be replaced either by $(\omega)$ or $\{\omega\}$.

 For a compact set $K$ in $\R^n$, define
$$
\mathcal{D}_{*}(K):=\{f\in\mathcal{E}_{*}(\R^n):\mbox{supp}f \subset K\},
$$
endowed with the induced topology.  For an open set $\Omega$ in $\R^n$, define
$$
\mathcal{D}_{*}(\Omega):=\ind_{\stackrel{\longrightarrow}{K\subset\subset\Omega}}
\mathcal{D}_{*}(K).
$$

Following \cite{juan-huguet2010iterates}, we consider smooth functions in an
open set $\Omega$ such that there exists $C>0$ verifying for each
$j \in {\N}_{0}:=\N\cup \{0\},$
$$
\|P^j(D)f\|_{2,K}\leq C\exp\left(\lambda\varphi^*(\frac{jm}{\lambda})\right),
$$
where $K$ is a compact subset in $\Omega$, $\|\cdot\|_{2,K}$ denotes the
${L}^2$-norm on $K$ and $P^j(D)$ is the $j$-th iterate of the partial
differential operator $P(D)$ of order $m$, i.e.,
 $$P^j(D)=P(D)\underbrace{\circ\cdot\cdot\cdot\circ}_{j}P(D).$$
 If $j=0$, then $P^0(D)f=f.$

Given a polynomial $P\in\mathbb{C}[z_1,\ldots,z_n]$ of degree $m$,
$P(z)=\sum\limits_{|\alpha|\leq m}a_{\alpha}z^{\alpha},$
the partial  differential operator $P(D)$ is defined as
$
P(D)=\sum_{|\alpha|\leq m}a_{\alpha}D^{\alpha}$, where $D=\frac{1}{i}\partial.$

The spaces of ultradifferentiable functions with respect to the successive
iterates of $P$ are defined as follows.

 Let $\omega$ be a weight function. Given a polynomial $P$, an open set
$\Omega$ of
$\R^n$, a compact subset $K\subset\subset\Omega$ and $\lambda>0$,
we define the seminorm
\begin{equation}
\|f\|_{K,\lambda}:=\sup_{j\in\mathbb{N}_{0}}\|P^j(D)f\|_{2,K}\exp\left(-\lambda
\varphi^*(\frac{jm}{\lambda})\right)\label{generalized-seminorm}
\end{equation}
and set
$$
\mathcal{E}_{P,\omega}^{\lambda}(K)=\{ f\in\mathcal{C}^{\infty}(K):\mbox{ }
\|f\|_{K,\lambda}<+\infty\}.
$$

It is a normed space endowed with the $\|\cdot\|_{K,\lambda}$-norm.

The space of {\it ultradifferentiable functions of Beurling type with
respect to the iterates of $P$} is:
$$
\mathcal{E}^P_{(\omega)}(\Omega)=\{ f\in\mathcal{C}^{\infty}(\Omega):
\mbox{ }\|f\|_{K,\lambda}<+\infty\mbox{ for each }K\subset\subset\Omega
\mbox{ and }\lambda>0\},
$$
{\rm endowed with the topology given by}
 $$\mathcal{E}^P_{(\omega)}(\Omega):=
\proj_{\stackrel{\longleftarrow}{K\subset\subset\Omega}}
\proj_{\stackrel{\longleftarrow}{\lambda>0}}\mathcal
{E}_{P,\omega}^{\lambda}(K). $$

{\rm If $\{K_n\}_{n\in\mathbb{N}}$ is a compact
exhaustion of $\Omega$ we have}
 $$ \mathcal{E}^P_{(\omega)}(\Omega)=\proj_{\stackrel{\longleftarrow
}{n\in\mathbb{N}} }\proj_{ \stackrel{\longleftarrow}
 {k\in\mathbb{N}}}\mathcal{E}_{P,\omega}^{k}(K_n)=
\proj_{\stackrel{\longleftarrow}{n\in\mathbb{N}}}\mathcal
{E}_{P,\omega}^{n}(K_n).$$

This is a metrizable locally convex topology defined by the
fundamental system of seminorms
$\left\{\|\cdot\|_{K_{n},n}\right\}_{n\in\mathbb{N}}$. 

The space of {\it ultradifferentiable functions of Roumieu type
with respect to the iterates of $P$} is defined by:
$$
\mathcal{E}^P_{\{\omega\}}(\Omega)=\{ f\in\mathcal{C}^{\infty}(\Omega):
\mbox{ }\forall K\subset\subset\Omega\mbox{ }\exists\lambda>0
\mbox{ such that }
\|f\|_{K,\lambda}<+\infty\}.
$$
{\rm Its topology is defined by}
 $$\mathcal{E}^P_{\{\omega\}}(\Omega):=
\proj_{\stackrel{\longleftarrow}{K\subset\subset\Omega}}
\ind_{\stackrel{\longrightarrow}{\lambda>0}}\mathcal
{E}_{P,\omega}^{\lambda}(K). $$

As in the Gevrey case, we call these classes
{\it generalized non-quasianalytic classes.} We observe that
in comparison with the spaces of generalized non-quasianalytic
classes as defined in \cite{juan-huguet2010iterates} we add here $m$
as a factor inside $\varphi^*$ in \eqref{generalized-seminorm}, where
$m$ is the order of the operator $P$, which does not change the properties
of the classes and will simplify the notation in the following.

The inclusion map $\E_\ast(\Omega)\hookrightarrow \E^P_\ast(\Omega)$ is
continuous (see \cite[Theorem 4.1]{juan-huguet2010iterates}). The space
$\mathcal{E}^P_{*}(\Omega)$ is complete if and only if $P$ is hypoelliptic
(see \cite[Theorem 3.3]{juan-huguet2010iterates}). Moreover, under a mild
condition on $\omega$ introduced by Bonet, Meise and Melikhov
\cite{bonet_meise_melikhov2007a}, $\mathcal{E}^P_{*}(\Omega)$  coincides
with the class of ultradifferentiable functions $\mathcal{E}_{*}(\Omega)$ if
and only if $P$ is elliptic (see \cite[Theorem 4.12]{juan-huguet2010iterates}).

We denote by
\beqsn
\widehat{f}(\xi):=\int e^{-i\langle x,\xi\rangle}f(x)dx
\eeqsn
the classical Fourier transform $\F(f)$.

Now, let $P(D)=\sum_{|\alpha|\leq m}c_\alpha D^\alpha$
 be a linear partial differential operator with constant
coefficients, where $D=-i\partial$. We recall the notion of
hypoellipticity in the $C^\infty$ class:
$P(D)$ is hypoelliptic in $\Omega\subseteq\R^n$ if $P(D)u\in C^\infty(\Omega)$
implies $u\in C^\infty(\Omega)$. In this case we also say that the polynomial
$P(\xi)=\sum_{|\alpha|\leq m}c_\alpha \xi^\alpha$ is hypoelliptic.

We set
\beqsn
V=V(P):=\{\zeta\in\C^n:\ P(\zeta)=0\}
\eeqsn
and consider the distance from $\xi\in\R^n$ to $V$:
\beqsn
d(\xi):=\inf_{\zeta\in V}|\xi-\zeta|,\qquad\xi\in\R^n.
\eeqsn
From \cite[Thms 11.1.1 and 11.1.3]{H} (see also
\cite[Prop. 2.2.1]{R}) we recall the following characterization of
hypoellipticity, that will be useful in the following:
\begin{Th}
\label{thH}
Let $P(D)$ be a linear partial differential operator with constant
coefficients.
The following properties are equivalent for $P$ to be hypoelliptic:
\begin{itemize}
\item[(1)]
For every open set $\Omega\subseteq\R^n$ and $u\in\D'(\Omega)$
\beqsn
\WF(u)=\WF(P(D)u).
\eeqsn
\item[(2)]
For every open set $\Omega\subseteq\R^n$ and $u\in\D'(\Omega)$
\beqsn
\singsupp u=\singsupp P(D)u.
\eeqsn
\item[(3)]
$P$ is homogeneous hypoelliptic, i.e. if $\Omega$ is open in $\R^n$
and $u\in\D'(\Omega)$ then $P(D)u=0$ implies $u\in C^\infty(\Omega)$.
\item[(4)]
$\ds \lim_{\afrac{\xi\in\R^n}{|\xi|\to+\infty}}\frac{D^\alpha P(\xi)}{P(\xi)}=0
\qquad\forall\alpha\neq0.$
\item[(5)]
$P(D)$ has a fundamental solution $E$ with $\singsupp E=\{0\}$.
\item[(6)]
$\ds\lim_{\afrac{\xi\in\R^n}{|\xi|\to+\infty}}d(\xi)=+\infty$.
\item[(7)]
$\ds\lim_{\afrac{\zeta\in V}{|\zeta|\to+\infty}}|\Im\zeta|=+\infty.$
\item[(8)]
There exist $C>0$ and a largest $0<c\leq 1$, with $c\in\Q$, such that
for all $\alpha\neq0$:
\beqsn
\left|\frac{D^\alpha P(\xi)}{P(\xi)}\right|\leq C|\xi|^{-|\alpha|c}
\qquad\xi\in\R^n,\ |\xi|\gg1.
\eeqsn
\item[(9)]
There exist $C>0$ and a largest $0<c\leq 1$, with $c\in\Q$, such that
\beqs
\label{defc}
d(\xi)\geq C|\xi|^c,\qquad\xi\in\R^n,\ |\xi|\gg1.
\eeqs
\end{itemize}

Here $\WF(u)$ and $\singsupp u$ denote the classical wave front set
and singular support of $u\in\D'(\Omega)$, as defined in \cite{H1}.
\end{Th}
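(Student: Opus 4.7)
The plan is to close the long list of equivalences via a cycle, grouping the statements into three types: microlocal/distributional conditions on solutions of $P(D)u=0$ (items (1)--(3), (5)), geometric/analytic conditions on the zero variety $V$ (items (6), (7), (9)), and differential conditions on the symbol $P(\xi)$ (items (4), (8)). My strategy is to prove the chain
$$(1)\Rightarrow (2)\Rightarrow (3)\Rightarrow (7)\Leftrightarrow (6)\Rightarrow (9)\Rightarrow (8)\Rightarrow (4)\Rightarrow (5)\Rightarrow (1),$$
which makes it unnecessary to treat most conditions individually and keeps the argument close to Hörmander's exposition.

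The easy links should be disposed of first. The implications $(1)\Rightarrow(2)\Rightarrow(3)$ follow from the projection of $\WF$ onto $\singsupp$ and the fact that $\singsupp(P(D)u)=\emptyset$ when $P(D)u=0$. The equivalence $(6)\Leftrightarrow(7)$ is immediate from the definition $d(\xi)=\inf_{\zeta\in V}|\xi-\zeta|$ together with the observation that $\Re\zeta$ may be taken close to $\xi\in\R^n$ up to lower order terms in the height of $\Im\zeta$. Similarly $(9)\Rightarrow(6)$ is trivial, $(8)\Rightarrow(4)$ is trivial, and $(8)\Leftrightarrow(9)$ is the elementary manipulation relating the distance to $V$ with derivative quotients of $P$, which holds once one notices that $\sum_\alpha|D^\alpha P(\xi)/P(\xi)|^{2/|\alpha|}$ is comparable to $d(\xi)^{-2}$ by Taylor expansion around $\xi$.

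The genuinely hard step is $(3)\Rightarrow(7)$. Here my idea is to argue by contrapositive: if there is a sequence $\zeta_k\in V$ with $|\zeta_k|\to\infty$ but $|\Im\zeta_k|$ bounded, then the exponential solutions $e^{i\langle x,\zeta_k\rangle}$ of $P(D)u=0$ can be superposed (after suitable normalization and a compactness argument on the imaginary parts) to produce a non-smooth distributional solution, contradicting homogeneous hypoellipticity. The passage $(6)\Rightarrow(9)$ with the best rational $c$ is the Tarski–Seidenberg–Paley–Wiener argument: the semi-algebraic function $\xi\mapsto d(\xi)^{-1}$ admits an asymptotic rational exponent, which yields both the rationality of $c$ and its maximality. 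I expect this algebraic step, together with the refinement $(6)\Rightarrow(8)$ via the Malgrange–Hörmander inequalities for $|D^\alpha P(\xi)|/|P(\xi)|$, to be the main technical obstacle; it is the place where one really uses the polynomial structure of $P$ rather than some abstract growth condition.

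Finally, to close the cycle I would use Hörmander's construction of a fundamental solution with singular support at the origin to obtain $(4)\Rightarrow(5)$: condition (4) provides the decay of $1/P(\xi+i\eta)$ on a suitable staircase contour in $\C^n$, from which one builds $E$ with $\singsupp E=\{0\}$. The last step $(5)\Rightarrow(1)$ is then straightforward: writing $u=E\ast P(D)u$ modulo a smooth term on any relatively compact open set, a microlocal inspection of the convolution with $E$ (whose wave front set is concentrated at the origin in $x$) gives $\WF(u)\subseteq \WF(P(D)u)$; the reverse inclusion is automatic, since $P(D)$ is a differential operator and therefore does not enlarge the wave front set. This completes the cycle and hence the theorem.
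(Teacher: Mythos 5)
The paper does not prove this theorem at all: it is recalled verbatim from H\"ormander (Theorems~11.1.1 and 11.1.3 of \cite{H}) together with \cite[Prop.~2.2.1]{R}, so there is no ``paper's proof'' to compare against. Your plan is nevertheless worth assessing against the source it tacitly reconstructs, and it is essentially the H\"ormander route: a single cycle through the nine conditions, with $(3)\Rightarrow(7)$ by constructing a non-smooth null solution from low-lying zeros of $P$, the semi-algebraic/Tarski--Seidenberg input feeding the rationality and optimality of $c$ in $(8)$ and $(9)$, a parametrix argument for $(4)\Rightarrow(5)$, and the convolution argument for $(5)\Rightarrow(1)$. This matches the cited sources in structure.

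Two points in your sketch are looser than they should be. First, you describe $(8)\Leftrightarrow(9)$ as an ``elementary manipulation'' and write that $\sum_{\alpha\neq 0}\bigl|D^\alpha P(\xi)/P(\xi)\bigr|^{2/|\alpha|}$ is comparable to $d(\xi)^{-2}$ ``by Taylor expansion.'' The correct exponent is $1/|\alpha|$ and the comparison is with $d(\xi)^{-1}$, and only one half of it (the lower bound on the sum) follows from Taylor expanding $P$ around $\xi$ at a nearest zero. The reverse inequality, that the sum is also $O(d(\xi)^{-1})$, is H\"ormander's Lemma~11.1.4 and is not a triviality: it requires controlling $|D^\alpha P(\xi)|$ by $|P|$ on a ball where $P$ does not vanish, i.e.\ a separate quantitative estimate of Malgrange type. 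The paper itself flags this lemma explicitly in Remark~\ref{rem22} and records the inequality \eqref{2210R}, so this is the right point at which to cite rather than to assert elementarity. Second, in $(3)\Rightarrow(7)$ the phrase ``superposed after suitable normalization and a compactness argument on the imaginary parts'' elides the actual construction: one must choose a sufficiently lacunary subsequence $\zeta_k$ and coefficients so that the series converges in $\D'$ but the resulting $u$ fails to be $C^\infty$ at a point (or has compact, nonempty singular support). The idea is right; the execution is where the work is, and it is exactly \cite[Thm.~11.1.1]{H}. With those two caveats acknowledged, the plan is sound and compiles into the standard argument that the paper references.
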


\begin{Rem}
\label{rem22}
\begin{em}
By \cite[Lemma 11.1.4]{H} there exists a constant $C>0$ such that for
all polynomials $P$ of degree $\leq m$
\beqs
\label{2210R}
C^{-1}\leq d(\xi)\sum_{\alpha\neq0}
\left|\frac{D^\alpha P(\xi)}{P(\xi)}\right|^{\frac{1}{|\alpha|}}
\leq C\qquad\mbox{for}\ \xi\in\R^n\ \mbox{with}\ P(\xi)\neq0.
\eeqs
Therefore the constant $c$ at the exponent in $(8)$ and $(9)$ of
Theorem \ref{thH} coincide.

In particular, from $(8)$ with $|\alpha|=m$ we have that if $P$ is
hypoelliptic and of order $m$ then there exist $\delta,d>0$ such that
\beqs
\label{H1}
|P(\xi)|\geq\delta|\xi|^d\qquad\xi\in\R^n,\ |\xi|\gg1,
\eeqs
with $mc=d\leq m$.

Moreover, by \cite[Thm 3.1]{Hint}, there exists a smallest constant $\gamma:=\gamma(P)>0$, which depends on $P$ and will be relevant later
such that
\beqs
\label{gamma}
|D^\alpha P(\xi)|^2\leq C(1+|P(\xi)|^2)^{1-\frac{|\alpha|}{\gamma}}\qquad\forall
\xi\in\R^n\qquad\forall\alpha\in\N_0^n,
\eeqs
for some $C>0$. Note that $m\leq\gamma\leq m/c$, since $b:=1/\gamma$ has been
determined in Theorem 3.1 of \cite{Hint} applying the Tarski-Seidenberg
theorem to
\beqs
\label{b1}
M(\lambda):=\sup_{|P(\xi)|=\lambda}|\grad P(\xi)|=A\lambda^{1-b}(1+o(1)),
\eeqs
and $(8)$ of Theorem \ref{thH} implies
\beqs
\label{b2}
M(\lambda)\leq A'\lambda^{1-\frac cm},\qquad\lambda\gg1
\eeqs
for some $A,A'>0$, if $P$ has order $m$;
\eqref{b1} and \eqref{b2} imply $b\geq c/m$ and hence $m\leq\gamma\leq m/c$
($\gamma\geq m$ by \eqref{gamma} with $|\alpha|=m$).
\end{em}
\end{Rem}

We want to generalize Theorem \ref{thH} to $\omega$-hypoellipticity,
for a weight function $\omega$.
\begin{Def}
A linear partial differential operator $P(D)$ with constant
coefficients is said to be $(\omega)$-hypoelliptic
($\{\omega\}$-hypoellpitic) if every solution $u\in\D'(\R^n)$
of $P(D)u=f$ is in $\E_{(\omega)}(\R^n)$ whenever $f\in\E_{(\omega)}(\R^n)$
($u\in\E_{\{\omega\}}(\R^n)$ whenever $f\in\E_{\{\omega\}}(\R^n)$).
\end{Def}

We have the following characterization of $(\omega)$-hypoellipticity
(Beurling case):
\begin{Th}
\label{thhypoB}
Let $P(D)$ be a linear partial differential operator with
constant coefficients and $\omega$ a non-quasianalytic weight function.
The following conditions are equivalent:
\begin{itemize}
\item[(1)]
$P(D)$ is $(\omega)$-hypoelliptic.
\item[(2)]
$\ds\lim_{\afrac{\zeta\in V}{|\zeta|\to+\infty}}\frac{|\Im\zeta|}{\omega(\zeta)}
=+\infty.$
\item[(3)]
$\ds\lim_{\afrac{\xi\in\R^n}{|\xi|\to+\infty}}\frac{\omega(\xi)}{d(\xi)}=0.$
\item[(4)]
$\ds\lim_{\afrac{\xi\in\R^n}{|\xi|\to+\infty}}
\frac{\omega(\xi)^{|\alpha|}|D^\alpha P(\xi)|}{|P(\xi)|}=0
\qquad\forall\alpha\neq0$.
\end{itemize}
\end{Th}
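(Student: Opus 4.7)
The plan is to follow the scheme of the classical Theorem \ref{thH}, replacing the power $|\xi|^c$ by the weight $\omega(\xi)$. Conditions $(2)$, $(3)$ and $(4)$ all describe the asymptotic behaviour of the characteristic variety $V$ at infinity from three different viewpoints, so I would first prove $(2)\Leftrightarrow(3)\Leftrightarrow(4)$ by purely geometric/algebraic arguments, and then close the loop via $(1)\Rightarrow(2)$ and $(4)\Rightarrow(1)$ using Fourier analysis.

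For $(2)\Leftrightarrow(3)$ I would proceed as in the classical proof of $(6)\Leftrightarrow(7)$ in Theorem \ref{thH}. Given $\xi\in\R^n$ with $|\xi|$ large, pick $\zeta=\zeta(\xi)\in V$ with $|\xi-\zeta|\leq d(\xi)+1$; then $|\Im\zeta|\leq d(\xi)+1$ and $|\zeta|\leq |\xi|+d(\xi)+1$, so by the doubling property $(\alpha)$, $\omega(\zeta)$ is controlled by a constant multiple of $\omega(\xi)+\omega(d(\xi))+1$. Combined with $(2)$ this forces $d(\xi)/\omega(\xi)\to\infty$, i.e.\ $(3)$. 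Conversely, given $\zeta\in V$ with $|\zeta|\to\infty$, set $\xi:=\Re\zeta$ and use $d(\xi)\leq |\Im\zeta|$ together with an analogous comparison of $\omega(\xi)$ and $\omega(\zeta)$ to derive $(2)$ from $(3)$. For $(3)\Leftrightarrow(4)$ I invoke Remark \ref{rem22}: the two-sided estimate \eqref{2210R} gives
\[
\frac{1}{d(\xi)}\sim\max_{\alpha\neq 0}\left|\frac{D^\alpha P(\xi)}{P(\xi)}\right|^{1/|\alpha|},
\]
so after multiplication by $\omega(\xi)$ and raising to the $|\alpha|$-th power, $\omega(\xi)/d(\xi)\to 0$ translates exactly into $\omega(\xi)^{|\alpha|}|D^\alpha P(\xi)/P(\xi)|\to 0$ for every $\alpha\neq 0$.

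For $(1)\Rightarrow(2)$ I argue by contraposition. If $(2)$ fails, there exist $M>0$ and a sequence $\zeta_k\in V$ with $|\zeta_k|\to\infty$ and $|\Im\zeta_k|\leq M\omega(\zeta_k)$; the plane waves $e_k(x):=e^{i\langle x,\zeta_k\rangle}$ solve $P(D)u=0$, and a Hörmander-style lacunary series $u=\sum_k c_k e_k$ with coefficients chosen so that $u\in\D'(\R^n)$ must fail the Beurling bound on derivatives, because Young's inequality gives $|\zeta_k|^{|\alpha|}\leq e^{\lambda\varphi^*(|\alpha|/\lambda)+\lambda\omega(\zeta_k)}$ and $|\Im\zeta_k|\leq M\omega(\zeta_k)$ prevents the factor $e^{\lambda\omega(\zeta_k)}$ from being absorbed uniformly in $\lambda$. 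For $(4)\Rightarrow(1)$ I would construct a fundamental solution $E\in\E_{(\omega)}(\R^n\setminus\{0\})$ of $P(D)$ via the Hörmander--Malgrange contour-shift representation $E(x)=(2\pi)^{-n}\int_\Gamma e^{i\langle x,\zeta\rangle}P(\zeta)^{-1}\,d\zeta$, where condition $(4)$ supplies precisely the control on $D^\alpha P/P$ needed to deform $\Gamma$ so that $|\Im\zeta|$ grows like $\omega(\Re\zeta)$ while $|P(\zeta)|$ stays bounded below; convolution with a compactly supported $f\in\E_{(\omega)}$ and a standard localisation then give $(\omega)$-regularity of $u$. The main obstacle I anticipate is this last step, namely executing the contour deformation precisely enough to recover the Braun--Meise--Taylor seminorms defining $\E_{(\omega)}$, with a closely related subtlety being the choice of coefficients $c_k$ in the $(1)\Rightarrow(2)$ counterexample so that convergence in $\D'$ and violation of $(\omega)$-regularity occur simultaneously, which relies on $\omega(t)=o(t)$ from $(\gamma)$.
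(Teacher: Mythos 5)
Your treatment of $(2)\Leftrightarrow(3)\Leftrightarrow(4)$ coincides with the paper's: pick $\zeta\in V$ nearest to $\xi$ (or $\xi=\Re\zeta$ in the reverse direction), split into cases according to whether $|\Im\zeta|\lessgtr|\Re\zeta|$ and whether $\xi$ is closer to $\zeta$ or to the origin, and use property $(\alpha)$ together with $\omega(t)=o(t)$; then pass from $(3)$ to $(4)$ via Hörmander's two-sided estimate \eqref{2210R}. Where your route genuinely diverges is in closing the cycle. The paper does no analysis at all for $(1)\Leftrightarrow(2)$: it is quoted verbatim from Bonet--Fern\'andez--Meise \cite{BFM}, which is precisely the theorem characterizing $(\omega)$-hypoellipticity of constant-coefficient (more generally convolution) operators by the geometry of $V$. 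You instead propose a self-contained argument: a lacunary plane-wave series for the contrapositive of $(1)\Rightarrow(2)$, and a contour-shifted Malgrange--H\"ormander parametrix for $(4)\Rightarrow(1)$.

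Both of your constructions are the morally correct ideas — they are what lies behind \cite{BFM} — but as written they are not yet proofs, and the gaps you flag are the substantive ones. For $(1)\Rightarrow(2)$: since $\Im\zeta_k\neq 0$, each $e^{i\langle x,\zeta_k\rangle}$ grows exponentially in $|x|$, so convergence of $\sum_k c_k e^{i\langle x,\zeta_k\rangle}$ in $\D'(\R^n)$ is not automatic and requires the $c_k$ to decay faster than $e^{-R|\Im\zeta_k|}$ for every $R$; at the same time the $c_k$ must decay slowly enough that the resulting $u$ violates some $p_{K,\lambda}$-seminorm, and reconciling these two demands is exactly the delicate point (H\"ormander's classical version constructs $u$ only on a bounded set). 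For $(4)\Rightarrow(1)$: deforming the integration contour by $O(\omega(\Re\zeta))$ and extracting the $\E_{(\omega)}$-seminorms $p_{K,\lambda}$ for $E$ away from the origin is nontrivial, and recovering $(\omega)$-regularity of $u$ from $E\in\E_{(\omega)}(\R^n\setminus\{0\})$ also requires a convolution/localization argument with $\E_{(\omega)}$-estimates. None of this is wrong, but it is essentially the content of the cited theorem, not something that can be dispatched in a paragraph. In short: the paper buys brevity by quoting \cite{BFM}; your version would be self-contained but needs the two heavy analytic constructions carried out in full before it counts as a complete proof.
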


\begin{proof}
$(1)\Leftrightarrow(2)$ is proved in \cite{BFM}.

$(3)\Rightarrow(2)$: If $\zeta\in V$ and $|\Im\zeta|\leq|\Re\zeta|$, then from
property $(\alpha)$ of $\omega$ we have that
\beqsn
\frac{|\Im\zeta|}{\omega(\zeta)}\geq\frac{d(\Re\zeta)}{L(\omega(\Re\zeta)+1)}
\longrightarrow+\infty
\eeqsn
by $(3)$.

If $\zeta\in V$ and $|\Re\zeta|\leq|\Im\zeta|$ then
\beqsn
\frac{|\Im\zeta|}{\omega(\zeta)}\geq
\frac12\frac{|\Re\zeta|+|\Im\zeta|}{\omega(\zeta)}\longrightarrow+\infty
\eeqsn
since $\omega(t)=o(t)$.

$(2)\Rightarrow(3)$: For every fixed $\xi\in\R^n$ take $\zeta\in V$ with
$|\zeta-\xi|\leq 2d(\xi)$. Take $|\xi|$ large enough.

If $|\zeta-\xi|\leq|\zeta|$, then $|\xi|\leq2|\zeta|$ and
\beqsn
\frac{d(\xi)}{\omega(\xi)}\geq\frac12\frac{|\zeta-\xi|}{\omega(\xi)}
\geq\frac12\frac{|\Im\zeta|}{L(\omega(\zeta)+1)}\longrightarrow+\infty
\eeqsn
because of $(2)$.

In $|\zeta-\xi|\geq|\zeta|$, then $(3)$ follows from the inequality
\beqsn
\frac{d(\xi)}{\omega(\xi)}\geq\frac12
\max\left\{\frac{|\xi|}{\omega(\xi)}-\frac{|\zeta|}{\omega(\xi)},
\frac{|\zeta|}{\omega(\xi)}\right\},
\eeqsn
since $\omega(t)=o(t)$.

$(3)\Leftrightarrow(4)$: follows from \eqref{2210R}.
\end{proof}

\begin{Ex}
\begin{em}
Theorem \ref{thhypoB} shows, for example, that the heat operator
$P=\partial_t-\Delta_x$ is not $(t^{1/2})$-hypoelliptic, since
\beqsn
\frac{|(\tau,\xi)|^{2a}|D_{x_j}^2P(\tau,\xi)|}{|P(\tau,\xi)|}
=\frac{(\tau^2+|\xi|^2)^a\cdot2}{|i\tau+|\xi|^2|}
=\frac{2(\tau^2+|\xi|^2)^a}{\sqrt{\tau^2+|\xi|^4}}\not\!\!\longrightarrow0
\eeqsn
for $\tau=|\xi|\to+\infty$ if $2a\geq1$.
On the other hand, it is well known that the heat operator is
$\{t^{1/2}\}$-hypoelliptic (as can be seen also by Theorem \ref{thhypoR} below).
\end{em}
\end{Ex}

In the Roumieu case we have the following theorem
of characterization of $\{\omega\}$-hypoellipticity, which generalizes
Proposition 2.2.1 of \cite{R} (where a similar result is given for the
Gevrey classes):
\begin{Th}
\label{thhypoR}
Let $P(D)$ be a linear partial differential operator with
constant coefficients and $\omega$ a non-quasianalytic weight function.
The following conditions are equivalent:
\begin{itemize}
\item[(1)]
$P(D)$ is $\{\omega\}$-hypoelliptic.
\item[(2)]
$\ds\liminf_{\zeta\in V,|\zeta|\to+\infty}\frac{|\Im\zeta|}{\omega(\zeta)}>0.$
\item[(3)]
There exists $c>0$ such that
\beqsn
\omega(\zeta)\leq c(1+|\Im\zeta|)\qquad\forall\zeta\in V.
\eeqsn
\item[(4)]
There exist $c,C>0$ such that
\beqsn
\omega(\xi)\leq Cd(\xi)\qquad\mbox{for}\ \xi\in\R^n,\ |\xi|>c.
\eeqsn
\item[(5)]
There exist $c,C>0$ such that
\beqsn
|D^\alpha P(\xi)|\leq C|P(\xi)|\omega(\xi)^{-|\alpha|}
\qquad\mbox{for}\ \xi\in\R^n,\ |\xi|>c,\ \alpha\in\N_0^n.
\eeqsn
\end{itemize}
\end{Th}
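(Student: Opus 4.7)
The plan is to establish the cycle $(1)\Leftrightarrow(2)\Leftrightarrow(3)\Leftrightarrow(4)\Leftrightarrow(5)$, paralleling the Beurling argument in Theorem \ref{thhypoB} but with the $\liminf$-type estimate replacing the $\lim=+\infty$ condition, as befits the Roumieu setting.

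For $(1)\Leftrightarrow(2)$ I would quote the Roumieu Paley--Wiener characterization from \cite{BFM}, which is the exact analogue of the Beurling case used in Theorem \ref{thhypoB}. For $(2)\Leftrightarrow(3)$ I would argue by reformulation: if $\liminf_{|\zeta|\to\infty,\,\zeta\in V}|\Im\zeta|/\omega(\zeta)\ge\delta>0$, then $\omega(\zeta)\leq\delta^{-1}|\Im\zeta|$ outside a large ball in $V$, while on the remaining compact piece $\omega(\zeta)$ is bounded, so adjusting the constant yields $(3)$; conversely $(3)$ combined with the fact that $V$ is non-compact (hence $\omega(\zeta)\to\infty$ along $V$ by $(\gamma)$) forces $|\Im\zeta|/\omega(\zeta)\to 1/c$.

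The heart of the proof is the real/complex translation $(3)\Leftrightarrow(4)$, done exactly in the style of Theorem \ref{thhypoB}. For $(3)\Rightarrow(4)$, given $\xi\in\R^n$ pick $\zeta\in V$ with $|\xi-\zeta|\leq 2d(\xi)$, so $|\Im\zeta|\leq 2d(\xi)$; if $|\xi|\leq 2|\zeta|$, property $(\alpha)$ plus $(3)$ gives $\omega(\xi)\leq L(\omega(\zeta)+1)\leq L(c(1+2d(\xi))+1)$, and otherwise $d(\xi)\geq|\xi|/4$ and $\omega(t)=o(t)$ close the bound; note that $(3)$ already forces $P$ to be hypoelliptic (via Theorem \ref{thH}(7)), so $d(\xi)\to\infty$ and the constants can be absorbed. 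For $(4)\Rightarrow(3)$, split $\zeta\in V$ according to whether $|\Im\zeta|\geq|\Re\zeta|$ (directly use $|\zeta|\leq\sqrt2|\Im\zeta|$ with $(\alpha)$ and $\omega(t)=o(t)$) or $|\Im\zeta|<|\Re\zeta|$ (then $d(\Re\zeta)\leq|\Im\zeta|$, so $(4)$ yields $\omega(\Re\zeta)\leq C|\Im\zeta|$, and $(\alpha)$ compares $\omega(\zeta)$ with $\omega(\Re\zeta)$).

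Finally $(4)\Leftrightarrow(5)$ is a direct consequence of the Hörmander estimate \eqref{2210R}: the inequality gives $|D^\alpha P(\xi)/P(\xi)|\leq (C/d(\xi))^{|\alpha|}$, and since $P$ has degree $m$ only finitely many multi-indices yield a nonzero $D^\alpha P$, so after substituting $d(\xi)\geq C^{-1}\omega(\xi)$ the constants $C^{|\alpha|}$ for $|\alpha|\leq m$ can be consolidated into a single $C$; the converse implication is immediate from \eqref{2210R} read in the opposite direction. The main obstacle I anticipate is the book-keeping in $(3)\Leftrightarrow(4)$, specifically ensuring that the various case splits (nearest point $\zeta$ with $|\zeta|$ large or small relative to $|\xi|$, and the split $|\Im\zeta|$ versus $|\Re\zeta|$) are covered with uniform constants; this is routine but must be done with care, and it is here that one genuinely uses all the axioms $(\alpha)$, $(\gamma)$ of the weight.
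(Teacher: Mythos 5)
Your proposal is correct and follows essentially the same route as the paper: quote \cite{BFM} for $(1)\Leftrightarrow(2)$, dispatch $(2)\Leftrightarrow(3)$ by an elementary liminf/constant reformulation, run the real/complex translation $(3)\Leftrightarrow(4)$ by picking a near-minimizer $\zeta\in V$ and case-splitting (a split equivalent to the paper's $|\zeta|\lessgtr|\xi-\zeta|$, resp. $|\Im\zeta|\lessgtr|\Re\zeta|$), invoking $(\alpha)$, $\omega(t)=o(t)$, and $d(\xi)\to+\infty$ which comes from (3) via Theorem~\ref{thH}(7), and finally read $(4)\Leftrightarrow(5)$ off \eqref{2210R} after consolidating the finitely many constants $C^{|\alpha|}$, $|\alpha|\le m$. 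The only cosmetic slip is in $(3)\Rightarrow(2)$, where you write that $|\Im\zeta|/\omega(\zeta)\to 1/c$; the bound $\omega(\zeta)\le c(1+|\Im\zeta|)$ only yields $\liminf|\Im\zeta|/\omega(\zeta)\ge 1/c>0$ (the ratio need not converge), which is all that condition $(2)$ asks for.
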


\begin{proof}
$(1)\Leftrightarrow(2)$ is proved in \cite{BFM}.

$(2)\Leftrightarrow(3)$: it is easy to check.

$(3)\Rightarrow(4)$:
For every fixed $\xi\in\R^n$ take $\zeta\in V$ with $|\zeta-\xi|\leq 2d(\xi)$.

If $|\zeta|\leq|\xi-\zeta|$ then $|\xi|\leq 2|\xi-\zeta|$ and for $|\xi|$
large enough
\beqsn
\omega(\xi)\leq\omega(2|\xi-\zeta|)\leq c|\xi-\zeta|\leq 2cd(\xi)
\eeqsn
for some $c>0$ since $\omega(t)=o(t)$.

If $|\xi-\zeta|\leq|\zeta|$ then $|\xi|\leq2|\zeta|$ and, by
property $(\alpha)$ of $\omega$ and $(3)$,
\beqsn
\omega(\xi)\leq L(\omega(\zeta)+1)\leq L'(|\Im\zeta|+1)
\leq L'(|\zeta-\xi|+1)
\leq L''d(\xi),
\eeqsn
for some $L',L''>0$, since $(3)$ implies that $d(\xi)\to+\infty$
(see \cite[Prop. 2.2.1]{R}).

$(4)\Rightarrow(3)$:
If $|\Im\zeta|\leq|\Re\zeta|$, then property $(\alpha)$ of $\omega$ and $(4)$
imply that
\beqsn
\omega(\zeta)\leq L(\omega(\Re\zeta)+1)\leq L'(d(\Re\zeta)+1)\leq
L'(|\Im\zeta|+1)
\eeqsn
for some $L'>0$.

If $|\Re\zeta|\leq|\Im\zeta|$ then, by property $(\alpha)$ of $\omega$,
\beqsn
\omega(\zeta)\leq L(\omega(|\Im\zeta|)+1)\leq L'(|\Im\zeta|+1)
\eeqsn
for some $L'>0$ since $\omega(t)=o(t)$.

$(4)\Leftrightarrow(5)$: It is straightforward because of \eqref{2210R}.
\end{proof}

\begin{Rem}
\label{rem27}
\begin{em}
From Theorems \ref{thhypoB} and \ref{thhypoR} we immediately get the
well-known result that $(\omega)$-hypoellipticity implies
$\{\omega\}$-hypoellipticity and they both imply hypoellipticity by
Theorem~\ref{thH}.
\end{em}
\end{Rem}

\section{Characterization of $\omega$-micro-hypoellipticity with
respect to the iterates of an operator}

Let $\omega$ be a non-quasianalytic weight function and $P(D)$ a linear partial
differential operator with constant coefficients. We want to characterize
the functions in the class $\E_*^P$, for
$*=(\omega)$ or $\{\omega\}$, by means of the Fourier transform.
To this aim we use a Paley-Wiener theorem for these spaces
of functions, that we borrow from \cite{J}.
Since our spaces $\E^P_*(\Omega)$ are slightly different from the
analogous ones defined in \cite{J}, our Paley-Wiener theorem
is only sligthly different from the one of \cite{J}, and therefore
we shall present the suitable statement here, omiting the proof  (see \cite[Lemma 3.1]{J}).

\begin{Lemma}
\label{lemmaPW}
Let $\omega$ be a non-quasianalytic
weight function, $P$ a polynomial of degree $m$, $K$ a
compact convex subset of $\R^n$ and $f\in\D(\R^n)$ with $\supp f\subset K$.
Then the following statements are equivalent:
\begin{itemize}
\item[(1)]
there exists $\lambda>0$ such that
\beqsn
\int_{\R^n}|\widehat{f}(\xi)|^2e^{\lambda\omega(|P(\xi)|^{1/m})}d\xi<+\infty;
\eeqsn
\item[(2)]
there exists $\lambda,C>0$ such that
\beqsn
\|P^j(D)f\|_{2,\R^n}\leq Ce^{\lambda\varphi^*\left(\frac{jm}{\lambda}\right)}
\qquad\forall j\in\N_0.
\eeqsn
\end{itemize}

To be more precise, if $(1)$ holds for some $\lambda_0>0$, then
$(2)$ holds for $\lambda=\lambda_0/2$ and for
$C:=(2\pi)^{-n/2}\left(\int_{\R^n}|\widehat{f}(\xi)|^2
e^{\lambda_0\omega(|P(\xi)|^{1/m})}d\xi\right)^{1/2}$.

Vice versa, if $(2)$ holds for some $\lambda>0$, then
\beqs
\label{PW3}
|\widehat{f}(\zeta)|\leq m(K)^{1/2}CD_{\lambda,\omega}
e^{H_K(\Im\zeta)-\frac\lambda2\omega(|P(\zeta)|^{1/m})}
\qquad\forall\zeta\in\C^n,
\eeqs
for some $D_{\lambda,\omega}>0$ depending on $\lambda$ and $\omega$,
where $m(K)$ is the Lebesgue measure of $K$ and $H_K(\cdot)$ the
supporting function of $K$; therefore $(1)$ holds for any
$\lambda'<\lambda$.
\end{Lemma}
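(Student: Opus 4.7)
My plan is to use Plancherel's theorem together with the Young inequality associated with the weight $\omega$, following the scheme of \cite[Lemma 3.1]{J}.

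For $(1)\Rightarrow(2)$, Plancherel gives
\[
\|P^j(D)f\|_{2,\R^n}^2=(2\pi)^{-n}\int_{\R^n}|P(\xi)|^{2j}|\widehat f(\xi)|^2\,d\xi.
\]
Applying the Young inequality $st\le\varphi(t)+\varphi^*(s)$ with $s=jm/\lambda$ and $t=\log|P(\xi)|^{1/m}$ (on the region $|P(\xi)|\ge 1$; the complement is trivial) and multiplying by $\lambda$ yields the pointwise bound
\[
|P(\xi)|^{2j}\le e^{2\lambda\omega(|P(\xi)|^{1/m})}\,e^{2\lambda\varphi^*(jm/\lambda)}.
\]
Substituting into the Plancherel identity and choosing $2\lambda=\lambda_0$ gives, after taking square roots, exactly $(2)$ with $\lambda=\lambda_0/2$ and the announced constant $C=(2\pi)^{-n/2}\bigl(\int|\widehat f(\xi)|^2 e^{\lambda_0\omega(|P(\xi)|^{1/m})}\,d\xi\bigr)^{1/2}$. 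The factor $1/2$ here reflects the fact that Plancherel forces us to work with $|P(\xi)|^{2j}$ rather than $|P(\xi)|^j$.

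For the converse, I would establish \eqref{PW3} first. Since $\supp f\subset K$ is compact, $\widehat f$ extends to an entire function on $\C^n$ satisfying $\widehat{P^j(D)f}(\zeta)=P(\zeta)^j\widehat f(\zeta)$, and a Cauchy--Schwarz estimate on $K$ gives
\[
|P(\zeta)|^j|\widehat f(\zeta)|\le m(K)^{1/2}\,e^{H_K(\Im\zeta)}\,\|P^j(D)f\|_{2,K}.
\]
Inserting $(2)$ produces, for every $j\in\N_0$,
\[
|\widehat f(\zeta)|\le m(K)^{1/2}\,C\,e^{H_K(\Im\zeta)}\,\exp\bigl(\lambda\varphi^*(jm/\lambda)-j\log|P(\zeta)|\bigr).
\]
The central step is to take the infimum over $j$ and invoke the biconjugation identity $\varphi^{**}=\varphi$ at the point $\log|P(\zeta)|^{1/m}$: the \emph{continuous} infimum over $s=jm/\lambda\ge 0$ equals $-\lambda\omega(|P(\zeta)|^{1/m})$, and passing to the discrete infimum along the grid $\{jm/\lambda:j\in\N_0\}$ costs a bounded perturbation of order $\log|P(\zeta)|$, which is $o(\omega(|P(\zeta)|^{1/m}))$ by property $(\gamma)$ and is therefore absorbed by replacing $\lambda$ with $\lambda/2$, the excess being packaged into the constant $D_{\lambda,\omega}$. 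In my view this discretization argument, where one must uniformly compare the continuous Legendre transform with its discrete analogue along the grid of step $m/\lambda$, is the main technical point; it is standard for Braun--Meise--Taylor weights thanks to property~$(\alpha)$.

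To conclude $(1)$ for any $\lambda'<\lambda$, I would evaluate \eqref{PW3} at real $\zeta=\xi\in\R^n$, where $H_K(0)=0$, obtaining
\[
|\widehat f(\xi)|^2\le m(K)\,C^2\,D_{\lambda,\omega}^2\,e^{-\lambda\omega(|P(\xi)|^{1/m})}.
\]
To integrate $|\widehat f(\xi)|^2 e^{\lambda'\omega(|P(\xi)|^{1/m})}$ over $\R^n$, I would split according to whether $|P(\xi)|$ is large or small: on $\{|P(\xi)|\ge 1\}$ the PW-type bound combined with $\lambda'<\lambda$ and $\log t=o(\omega(t))$ forces decay faster than any polynomial as $|\xi|\to\infty$, ensuring integrability there; on $\{|P(\xi)|<1\}$ the weight $e^{\lambda'\omega(|P(\xi)|^{1/m})}$ is bounded, and the Schwartz decay of $\widehat f$ (guaranteed by $f\in\D(\R^n)$) supplies integrability even where the zero set of $P$ extends to infinity. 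This yields $(1)$ for every $\lambda'<\lambda$.
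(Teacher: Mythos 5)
The paper omits a proof of this lemma, referring instead to \cite[Lemma 3.1]{J}; your Plancherel--Young/Legendre scheme is indeed the approach taken there, and your implications $(1)\Rightarrow(2)$ and $(2)\Rightarrow\eqref{PW3}$ are correct. The discretization step in the second implication is, as you say, the crux: if one writes $s_j:=jm/\lambda$ and, for given $s\ge0$, picks $j=\lfloor \lambda s/m\rfloor$, then monotonicity of $\varphi^*$ gives
\[
j\log|P(\zeta)|-\lambda\varphi^*(s_j)\ \ge\ \lambda\bigl(s\log|P(\zeta)|^{1/m}-\varphi^*(s)\bigr)-\log|P(\zeta)|,
\]
so taking the supremum in $s$ and using $\varphi^{**}=\varphi$ the discrete Legendre transform is within $\log|P(\zeta)|$ of $\lambda\omega(|P(\zeta)|^{1/m})$; since $\log t=o(\omega(t))$ this error is absorbed by halving $\lambda$. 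Note that the property doing the work here is $(\gamma)$, not $(\alpha)$ as you write in your final aside.

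There is, however, a genuine gap in your last step, \eqref{PW3}$\Rightarrow(1)$. The lemma does not assume $P$ hypoelliptic, so the set $\{|P(\xi)|\ge1\}$ can be unbounded with $|P(\xi)|$ staying \emph{bounded} on it (take, e.g., $P(\xi)=\xi_1$ on $\R^2$ and the slab $1\le\xi_1\le2$). On such a set $\omega(|P(\xi)|^{1/m})$ is bounded, so the PW-type bound $|\widehat f(\xi)|^2\le m(K)C^2D^2\,e^{-\lambda\omega(|P(\xi)|^{1/m})}$ gives no decay at all, and in particular no ``decay faster than any polynomial as $|\xi|\to\infty$'': integrability fails from this bound alone. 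Your split $\{|P(\xi)|\ge1\}$ versus $\{|P(\xi)|<1\}$, with the PW bound used on the first piece and the Schwartz decay on the second, therefore does not close the argument. The fix is to use \emph{both} bounds on $|\widehat f(\xi)|$ simultaneously, via the multiplicative interpolation
\[
|\widehat f(\xi)|^2\ \le\ \bigl(C_N(1+|\xi|)^{-N}\bigr)^{2\theta}\bigl(m(K)^{1/2}CD_{\lambda,\omega}\,e^{-\frac\lambda2\omega(|P(\xi)|^{1/m})}\bigr)^{2(1-\theta)},
\]
valid for any $\theta\in(0,1)$. Multiplying by $e^{\lambda'\omega(|P(\xi)|^{1/m})}$ and choosing $0<\theta\le1-\lambda'/\lambda$ kills the exponential factor, and then choosing $N$ with $2N\theta>n$ yields an integrable majorant, which gives $(1)$ for every $\lambda'<\lambda$. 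Once this correction is made, your proof is sound.
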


We present also, similarly as in  \cite[Thms 3.3 and 3.4]{J}, the following
Paley-Wiener type theorem for functions in
\beqsn
\D^P_*(\R^n)=\{f\in\E^P_*(\R^n): f\in\D(\R^n)\},
\eeqsn
for $*=(\omega)$ or $\{\omega\}$:
\begin{Th}
\label{thPW}
Let $P(\xi)$ be a hypoelliptic polynomial of degree $m$ and
$\omega$ a non-qua\-si\-ana\-ly\-tic weight function.
If $f\in\D^P_{(\omega)}(\R^n)$ (resp. $f\in\D^P_{\{\omega\}}(\R^n)$)
then its Fourier-Laplace trans\-form $F(\zeta)=\widehat{f}(\zeta)$
is an entire function satisfying the following two conditions:
\begin{itemize}
\item[(i)]
there exist $C,A>0$ such that
\beqsn
|F(\zeta)|\leq Ce^{A|\zeta|}\qquad\forall\zeta\in\C^n;
\eeqsn
\item[(ii)]
for every $\lambda>0$ (resp. there exists $\lambda>0$):
\beqsn
\int_{\R^n}|F(\xi)|^2e^{\lambda\omega(|P(\xi)|^{\frac1m})}d\xi<+\infty.
\eeqsn
\end{itemize}
Vice versa, if $F$ is an entire function satisfying $(i)$ and $(ii)$, then
$F(\zeta)=\widehat{f}(\zeta)$ for some
$f\in\D^P_{(\omega)}(\R^n)$ (resp. $f\in\D^P_{\{\omega\}}(\R^n)$).
\end{Th}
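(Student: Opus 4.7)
The plan is to deduce the theorem from Lemma~\ref{lemmaPW}, which already identifies the iterate $L^2$-seminorm (condition~(2) there) with the weighted integrability condition~(1) --- our~(ii) --- combined with the classical Paley-Wiener-Schwartz theorem to handle~(i) and the compact support of $f$. For the \emph{forward direction}, suppose $f \in \D^P_*(\R^n)$ and fix a convex compact $K$ containing $\supp f$; since $f \in \D(K)$, Paley-Wiener-Schwartz yields $|F(\zeta)| \leq C_N(1+|\zeta|)^{-N}e^{H_K(\Im\zeta)}$ for every $N$, and in particular~(i) with $A := \sup_{x\in K}|x|$. For~(ii), the equality $\|P^j(D)f\|_{2,K} = \|P^j(D)f\|_{2,\R^n}$ (forced by $\supp f \subseteq K$) shows that the finiteness of the seminorm $\|f\|_{K,\lambda}$ is exactly condition~(2) of Lemma~\ref{lemmaPW}, so $(2)\Rightarrow(1)$ of that lemma delivers~(ii), for every $\lambda>0$ in the Beurling case and for some $\lambda>0$ in the Roumieu case.

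For the \emph{reverse direction}, assume $F$ entire satisfies (i) and~(ii). By property~$(\gamma)$ of $\omega$ and the hypoellipticity bound $|P(\xi)| \geq \delta|\xi|^d$ for $|\xi|\gg 1$ (see~\eqref{H1}), the factor $e^{\lambda\omega(|P(\xi)|^{1/m})}$ dominates any polynomial in $|\xi|$, so~(ii) yields $(1+|\xi|)^N F(\xi) \in L^2(\R^n)$ for every $N\in\N$. Combined with~(i), the multidimensional $L^2$-Paley-Wiener theorem produces $f \in L^2(\R^n)$ with $\supp f \subseteq \overline{B(0,A)}$ and $\widehat{f}=F$, while the rapid $L^2$-decay of $F$ places all distributional derivatives of $f$ in $L^2$, so Sobolev embedding gives $f \in \D(\R^n)$. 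Setting $K := \overline{B(0,A)}$ and invoking $(1)\Rightarrow(2)$ of Lemma~\ref{lemmaPW} converts~(ii) into the iterate $L^2$-bound on $K$ with the correct quantifier on $\lambda$; since $\|P^j(D)f\|_{2,K'} \leq \|P^j(D)f\|_{2,K}$ for any compact $K' \subset \R^n$ (because $P^j(D)f$ is supported in $K$), the seminorms $\|f\|_{K',\lambda}$ are finite with the same quantifier, i.e.\ $f \in \D^P_*(\R^n)$.

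The \emph{main obstacle} is obtaining compact support of $f$ in the reverse direction: the bound~(i) involves $|\zeta|$ rather than $|\Im\zeta|$ and so is not by itself a Paley-Wiener estimate. The way out is that~(ii) supplies enough $L^2$-decay on $\R^n$ to invoke the multidimensional Plancherel-Polya/$L^2$-Paley-Wiener theorem for entire functions of exponential type with $L^2$ restriction to the real axis, which recovers the genuine $e^{A|\Im\zeta|}$ behaviour and hence the compact support of $f$.
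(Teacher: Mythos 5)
Your proof is correct and follows the natural route; the paper itself omits the proof and refers to \cite[Thms 3.3 and 3.4]{J}, but the strategy there is precisely the one you reconstruct: reduce the iterate estimate to the weighted $L^2$-Fourier bound via Lemma~\ref{lemmaPW} and handle the compact support by the classical Paley-Wiener machinery. You also correctly identify and resolve the one genuine subtlety, namely that condition~(i) is an exponential-type bound in $|\zeta|$ rather than $|\Im\zeta|$, so that in the reverse direction the compact support of $f$ cannot be read off from~(i) alone but requires upgrading to the bound $Ce^{A|\Im\zeta|}$ via Plancherel-P\'olya/Phragm\'en-Lindel\"of, using the $L^2$-decay on $\R^n$ that~(ii) provides. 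Two small points worth making explicit if you wrote this out fully: in the Beurling forward direction you need the quantitative part of Lemma~\ref{lemmaPW} (namely that~(2) with parameter $\lambda$ yields~(1) for every $\lambda'<\lambda$) to pass from ``for all $\lambda$'' in~(2) to ``for all $\lambda$'' in~(1), rather than just the bare equivalence; and in the reverse direction, after obtaining $f\in L^2$ with compact support one should observe that the rapid $L^2$-decay of $F$ (which you correctly derive from~(ii) and property~$(\gamma)$ together with~\eqref{H1}) puts $f$ in every Sobolev space $H^N$, so Sobolev embedding gives $f\in C^\infty$ and hence $f\in\D(\R^n)$ with $\supp f\subseteq\overline{B(0,A)}$, at which point Lemma~\ref{lemmaPW} applies with $K=\overline{B(0,A)}$ and the monotonicity $\|P^j(D)f\|_{2,K'}\leq\|P^j(D)f\|_{2,\R^n}$ finishes the job. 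You stated all of these correctly, so the proof is sound.
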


Now, we have all the tools to prove the following theorems of
characterization of $\E_{(\omega)}^P$ and $\E_{\{\omega\}}^P$:
\begin{Th}
\label{thB}
Let $P(D)$ be a hypoelliptic linear partial differential
operator with constant coefficients, $\Omega$ an open subset of $\R^n$, $u\in\D'(\Omega)$
and $x_0\in\Omega$.
Let $\omega$ be a non-quasianalytic weight function such that
$\omega(t^\gamma)=o(\sigma(t))$, as $t\to+\infty$, where $\gamma$ is the
constant defined in \eqref{gamma} and $\sigma(t)=t^{1/s}$ for some
$s>1$.

The following conditions are equivalent:
\begin{itemize}
\item[(1)]
$u\in\E_{(\omega)}^P(U)$ for some neighborhood $U$ of $x_0$.
\item[(2)]
There exists $\{f_N\}_{N\in\N}\subset\E'(\Omega)$ such that
$f_N=P(D)^Nu$ in a neighborhood of $x_0$ and:
\beqs
\nonumber
&&\forall k\in\N, M\in\R\ \exists C_{k,M}>0 :\\
\label{B0}
&&|\widehat{f}_N(\xi)|\leq C_{k,M}e^{k\varphi^*(Nm/k)}(1+|\xi|)^M
\qquad\forall N\in\N,\,\xi\in\R^n.
\eeqs
\item[(3)]
There exists $\psi\in\D_{\{\sigma\}}(\Omega)$, with $\psi\equiv1$ in a
neighborhood of $x_0$, such that:
\beqs
\nonumber
&&\forall k\in\N\ \exists C_k>0 :\\
\label{B1}
&&|\widehat{\psi u}(\xi)|\leq C_ke^{-k\omega(|P(\xi)|^{1/m})}
\qquad\forall\xi\in\R^n.
\eeqs
\item[(4)]
There exists $\psi\in\D_{\{\sigma\}}(\Omega)$, with $\psi\equiv1$ in a
neighborhood of $x_0$, such that:
\beqs
\nonumber
&&\forall k\in\N,\ell>0\ \exists C_{k,\ell}>0 :\\
\label{B2}
&&|P(\xi)|^N|\widehat{\psi u}(\xi)|\leq C_{k,\ell}e^{k\varphi^*(Nm/k)}
(1+|\xi|)^{-\ell}
\qquad\forall N\in\N_0,\,\xi\in\R^n.
\eeqs
\end{itemize}
\end{Th}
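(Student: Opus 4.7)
The plan is to establish the cyclic chain $(1)\Rightarrow(3)\Rightarrow(4)\Rightarrow(2)\Rightarrow(1)$. Three of the four implications are routine applications of the Paley--Wiener Lemma~\ref{lemmaPW}, its pointwise refinement \eqref{PW3}, Plancherel, and Young duality; the content lies in $(1)\Rightarrow(3)$, which localizes $u$ via a Gevrey cutoff.

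For $(1)\Rightarrow(3)$, I would choose $\psi\in\D_{\{\sigma\}}(\Omega)$ with $\psi\equiv 1$ on a neighborhood of $x_0$ and $\supp\psi\subset U$. The standing hypothesis $\omega(t^\gamma)=o(\sigma(t))$ is exactly the condition that triggers \cite[Proposition~17]{BJJ}: multiplication by the Gevrey cutoff $\psi$ does not leave the class, so $\psi u\in\E^P_{(\omega)}(\R^n)$, and having compact support it belongs to $\D^P_{(\omega)}(\R^n)$. Lemma~\ref{lemmaPW} applied in the Beurling regime then yields $\int_{\R^n}|\widehat{\psi u}(\xi)|^2 e^{\lambda\omega(|P(\xi)|^{1/m})}\,d\xi<+\infty$ for every $\lambda>0$, and the pointwise estimate \eqref{PW3} evaluated on the real axis (so $H_K(\Im\zeta)=0$) produces $|\widehat{\psi u}(\xi)|\leq C_k\, e^{-k\omega(|P(\xi)|^{1/m})}$ for every $k>0$, which is exactly (3).

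For $(3)\Rightarrow(4)$, I would split the exponential weight as $e^{-(k+k_0)\omega(|P(\xi)|^{1/m})}$ and apply the Young inequality $\varphi(t)+\varphi^*(s)\geq st$ with $t=\log|P(\xi)|^{1/m}$ and $s=Nm/k$, obtaining
\[
|P(\xi)|^N e^{-k\omega(|P(\xi)|^{1/m})}\leq e^{k\varphi^*(Nm/k)}.
\]
The leftover factor $e^{-k_0\omega(|P(\xi)|^{1/m})}$ absorbs into $(1+|\xi|)^{-\ell}$ for every $\ell$, by combining the hypoelliptic lower bound $|P(\xi)|\geq\delta|\xi|^d$ in \eqref{H1} with property $(\gamma)$ of $\omega$. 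For $(4)\Rightarrow(2)$, I would set $f_N:=P(D)^N(\psi u)\in\E'(\Omega)$; since $\psi\equiv 1$ near $x_0$, $f_N=P(D)^Nu$ there, and the identity $\widehat{f_N}(\xi)=P(\xi)^N\widehat{\psi u}(\xi)$ converts (4) directly into (2) upon taking $\ell=\max(0,-M)$. Finally, $(2)\Rightarrow(1)$ is a pure Plancherel estimate: choosing $M<-n/2$ in (2) makes $\|\widehat{f_N}\|_{L^2(\R^n)}\leq C_k\, e^{k\varphi^*(Nm/k)}$, and for any compact $K$ inside the neighborhood where $f_N=P(D)^Nu$ one has $\|P(D)^Nu\|_{2,K}\leq\|f_N\|_{L^2(\R^n)}$, which is (1).

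The main obstacle is the localization step $(1)\Rightarrow(3)$: since $\E^P_{(\omega)}(\Omega)$ is not in general an algebra, cutting $u$ off cannot be done for free, and one must invoke precisely the product theorem \cite[Proposition~17]{BJJ}, whose hypothesis is the very growth condition $\omega(t^\gamma)=o(\sigma(t))$ assumed in the statement. Once this localization is in place, the remaining implications are manipulations on the Fourier side, combining Paley--Wiener, Young duality and the hypoelliptic polynomial lower bound.
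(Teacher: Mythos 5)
Your cycle $(1)\Rightarrow(3)\Rightarrow(4)\Rightarrow(2)\Rightarrow(1)$ is correct, and the key step $(1)\Rightarrow(3)$ coincides exactly with the paper's: both invoke \cite[Prop.~17]{BJJ} under $\omega(t^\gamma)=o(\sigma(t))$ to conclude $\psi u\in\D^P_{(\omega)}$, then Theorem~\ref{thPW} together with the pointwise bound \eqref{PW3} of Lemma~\ref{lemmaPW}. Your $(3)\Rightarrow(4)$ by Young duality plus \eqref{H1} and property $(\gamma)$ is precisely the content of Lemma~\ref{lemmaBR}$(1.a)\Rightarrow(1.b)$ specialized to $\Gamma=\R^n$, which is also what the paper uses for $(3)\Leftrightarrow(4)$. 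Where you diverge is in closing the loop: the paper quotes $(1)\Leftrightarrow(2)$ wholesale from \cite[Prop.~6]{BJJ} and proves $(3)\Rightarrow(1)$ directly via the converse direction of Lemma~\ref{lemmaPW} (going from \eqref{B1} to the $L^2$ integrability condition, then to the seminorm estimate $\|P^j(D)(\psi u)\|_{2,\R^n}\leq C_\lambda e^{\lambda\varphi^*(jm/\lambda)}$), whereas you bypass the external citation by proving $(4)\Rightarrow(2)$ (trivial, via $\widehat{f_N}=P(\xi)^N\widehat{\psi u}$) and $(2)\Rightarrow(1)$ by Plancherel with $M<-n/2$, recovering the $L^2$ seminorms on compacts directly from the Fourier-side bound. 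The Plancherel route gives a self-contained chain at the cost of a slightly longer cycle; the paper's route is shorter on the page but opaque without BJJ in hand. One small point worth making explicit in your $(2)\Rightarrow(1)$: the estimate is obtained for each $k\in\N$, and passing to all $\lambda>0$ uses that $\lambda\mapsto\lambda\varphi^*(t/\lambda)=\sup_s\{ts-\lambda\varphi(s)\}$ is decreasing, so the discrete family of bounds suffices; and the neighborhood on which $f_N=P(D)^Nu$ must be taken uniform in $N$, which your construction in $(4)\Rightarrow(2)$ ensures.
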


\begin{Th}
\label{thR}
Let $P(D)$ be a hypoelliptic linear partial differential
operator with constant coefficients, $\Omega$ an open subset of $\R^n$, $u\in\D'(\Omega)$
and $x_0\in\Omega$.
Let $\omega$ be a non-quasianalytic weight function such that
$\omega(t^\gamma)=o(\sigma(t))$, as $t\to+\infty$, where $\gamma$ is the
constant defined in \eqref{gamma} and $\sigma(t)=t^{1/s}$ for some
$s>1$.

Then the following conditions are equivalent:
\begin{itemize}
\item[\{1\}]
$u\in\E_{\{\omega\}}^P(U)$ for some neighborhood $U$ of $x_0$.
\item[\{2\}]
There exists $\{f_N\}_{N\in\N}\subset\E'(\Omega)$ such that
$f_N=P(D)^Nu$ in a neighborhood of $x_0$ and:
\beqs
\nonumber
&&\exists k\in\N, \forall M\in\R\ \exists C_M>0:\\
\label{R0}
&&|\widehat{f}_N(\xi)|\leq C_Me^{\frac1k\varphi^*(Nmk)}(1+|\xi|)^M
\qquad\forall N\in\N,\,\xi\in\R^n.
\eeqs
\item[\{3\}]
There exists $\psi\in\D_{\{\sigma\}}(\Omega)$, with $\psi\equiv1$ in a
neighborhood of $x_0$, such that:
\beqs
\nonumber
&&\exists k\in\N, C>0:\\
\label{R1}
&&|\widehat{\psi u}(\xi)|\leq Ce^{-\frac1k\omega(|P(\xi)|^{1/m})}
\qquad\forall\xi\in\R^n.
\eeqs
\item[\{4\}]
There exists $\psi\in\D_{\{\sigma\}}(\Omega)$, with $\psi\equiv1$ in a
neighborhood of $x_0$, such that:
\beqs
\nonumber
&&\exists k\in\N,\forall\ell>0\ \exists C_\ell>0:\\
\label{R2}
&&|P(\xi)|^N|\widehat{\psi u}(\xi)|\leq C_\ell e^{\frac1k\varphi^*(Nmk)}
(1+|\xi|)^{-\ell}
\qquad\forall N\in\N_0,\,\xi\in\R^n.
\eeqs
\end{itemize}
\end{Th}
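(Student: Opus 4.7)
\emph{Strategy.} My plan is to establish the cyclic chain $\{1\}\Rightarrow\{3\}\Leftrightarrow\{4\}\Rightarrow\{2\}\Rightarrow\{1\}$, mirroring the proof of Theorem~\ref{thB} but with the Roumieu quantifier pattern $\exists k$ in place of $\forall k$. The central tools are Proposition~17 of \cite{BJJ}, which, under the hypothesis $\omega(t^\gamma)=o(\sigma(t))$, guarantees $\psi u\in\E^P_{\{\omega\}}(\R^n)$ whenever $\psi\in\D_{\{\sigma\}}(\Omega)$ and $u\in\E^P_{\{\omega\}}(U)$ with $\supp\psi\subset U$, together with the Paley--Wiener Lemma~\ref{lemmaPW} and Theorem~\ref{thPW}. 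Legendre duality ($\varphi^{**}=\varphi$) and the hypoellipticity bound \eqref{H1} glue the pieces together.

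\emph{Proof of $\{1\}\Rightarrow\{3\}$.} Choose $\psi\in\D_{\{\sigma\}}(\Omega)$ with $\psi\equiv 1$ on a neighborhood $V$ of $x_0$ and $\supp\psi\subset U$. By Proposition~17 of \cite{BJJ} we get $\psi u\in\D^P_{\{\omega\}}(\R^n)$, so there exists $k\in\N$ with $\|P^N(D)(\psi u)\|_{2,\R^n}\leq Ce^{(1/k)\varphi^*(Nmk)}$ for all $N\in\N_0$. Since the support of $P^N(D)(\psi u)$ is contained in the fixed compact set $\supp\psi$, Cauchy--Schwarz gives $\|P^N(D)(\psi u)\|_{1,\R^n}\leq C'e^{(1/k)\varphi^*(Nmk)}$, hence
\[
|P(\xi)|^N|\widehat{\psi u}(\xi)|=\bigl|\widehat{P^N(D)(\psi u)}(\xi)\bigr|\leq C'e^{(1/k)\varphi^*(Nmk)}\qquad\forall N\in\N_0,\ \xi\in\R^n.
\]
Taking the infimum over $N\in\N_0$ and invoking $\varphi^{**}=\varphi$ yields $|\widehat{\psi u}(\xi)|\leq C''e^{-(1/k')\omega(|P(\xi)|^{1/m})}$ for some $k'$ (the passage from continuous to discrete $N$ in the Legendre transform costs only a multiplicative constant), giving $\{3\}$.

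\emph{Remaining implications.} For $\{3\}\Rightarrow\{4\}$, Young's inequality gives $|P(\xi)|^N\leq\exp\bigl(\tfrac{1}{2k}\varphi^*(2Nmk)+\tfrac{1}{2k}\omega(|P(\xi)|^{1/m})\bigr)$; multiplying by $\{3\}$ cancels half of the $\omega$-exponent, and the remaining half, combined with \eqref{H1} and property $(\gamma)$ of $\omega$ (so that $\omega(|P(\xi)|^{1/m})\gg\log(1+|\xi|)$ as $|\xi|\to\infty$), absorbs any factor $(1+|\xi|)^\ell$ for $|\xi|$ large, while bounded $\xi$ are handled trivially since $\psi u$ is a compactly supported distribution of finite order. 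The reverse $\{4\}\Rightarrow\{3\}$ takes $\ell=0$ and applies the same Legendre-duality minimization over $N$ as above. For $\{4\}\Rightarrow\{2\}$ set $f_N:=P^N(D)(\psi u)\in\E'(\Omega)$; then $f_N=P^N(D)u$ on $V=\{\psi\equiv 1\}$ and $\widehat{f}_N(\xi)=P(\xi)^N\widehat{\psi u}(\xi)$, so $\{4\}$ with $\ell=-M$ (and trivially for $M\geq 0$) yields $\{2\}$. Finally, $\{2\}\Rightarrow\{1\}$ is Plancherel: choose $M<-n/2$ so that $\int(1+|\xi|)^{2M}d\xi<\infty$; this forces $\|f_N\|_{2,\R^n}\leq\tilde C\,e^{(1/k)\varphi^*(Nmk)}$, and on the common neighborhood of $x_0$ where $f_N=P^N(D)u$ this gives the required Roumieu seminorm bound, i.e., $u\in\E^P_{\{\omega\}}(V)$.

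\emph{Main obstacle.} The delicate point is the bookkeeping of the single parameter $k$: each transition can enlarge it ($k\mapsto 2k$ in the Young splitting, another multiplicative factor in the discrete Legendre step), and one must verify that a finite $k$ serves all the derived bounds simultaneously while preserving the universal quantifier on $M$ in $\{2\}$ and on $\ell$ in $\{4\}$. A secondary nuisance is reading ``$f_N=P(D)^Nu$ in a neighborhood of $x_0$'' in $\{2\}$ as a \emph{common} neighborhood, which is automatic in the construction $f_N=P^N(D)(\psi u)$ coming from $\{4\}$ and is the natural reading for $\{2\}\Rightarrow\{1\}$. Once these issues are handled, the argument is a direct transcription of the Beurling proof (Theorem~\ref{thB}) into the Roumieu quantifier pattern.
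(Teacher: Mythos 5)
Your proof is correct, and it takes a genuinely different route from the paper's. The paper disposes of Theorem~\ref{thR} with one sentence (``analogous to Theorem~\ref{thB} for $\lambda=1/k$''), and Theorem~\ref{thB} itself is proved via a star structure: $(1)\Leftrightarrow(2)$ is cited from \cite[Prop.~6]{BJJ}, $(1)\Leftrightarrow(3)$ goes through the full Paley--Wiener machinery (Theorem~\ref{thPW} producing the weighted $L^2$ bound, then Lemma~\ref{lemmaPW} furnishing the pointwise bound on $\C^n$ with the supporting function $H_K$, restricted to $\R^n$), and $(3)\Leftrightarrow(4)$ is Lemma~\ref{lemmaBR}. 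You instead run a cyclic chain $\{1\}\Rightarrow\{3\}\Leftrightarrow\{4\}\Rightarrow\{2\}\Rightarrow\{1\}$, replacing the analytic continuation argument of Lemma~\ref{lemmaPW} by the elementary observation that $P^N(D)(\psi u)$ has fixed compact support, so Cauchy--Schwarz converts the $L^2$ seminorm bound into an $L^1$ bound and hence a uniform bound on $P(\xi)^N\widehat{\psi u}(\xi)$; the exponential decay then drops out of a discrete Legendre maximization over $N$. You also replace the black-box citation \cite[Prop.~6]{BJJ} with a direct Plancherel argument for $\{2\}\Rightarrow\{1\}$, and you re-derive the content of Lemma~\ref{lemmaBR} inline via Young's inequality $Nm\log(|P(\xi)|^{1/m})\le\frac{1}{2k}\omega(|P(\xi)|^{1/m})+\frac{1}{2k}\varphi^*(2Nmk)$. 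What this buys: a more self-contained and elementary proof that never leaves $\R^n$. What it costs: slightly heavier bookkeeping of the Roumieu parameter $k$, and in particular the discrete-to-continuous Legendre step loses a polynomial factor $|P(\xi)|$ (not merely a multiplicative constant, as your parenthetical suggests), which must be absorbed using property~$(\gamma)$ of $\omega$ at the price of enlarging $k$ --- the paper's Lemma~\ref{lemmaBR} avoids this by taking $\ell=m$ so that the exponent $(N+1)m$ covers the continuous supremum exactly. You flag this issue honestly in your ``Main obstacle'' paragraph, and since the Roumieu condition only asserts existence of \emph{some} $k$, the finitely many enlargements along the cycle are harmless.
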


Before proving Theorems \ref{thB} and \ref{thR} we need the following
lemma:
\begin{Lemma}
\label{lemmaBR}
Let $\Gamma\subseteq \R^n$ be a cone, $P(D)$ a linear partial
differential operator of order $m$ with constant coefficients,
$\Omega$ an open subset of $\R^n$, $u\in\D'(\Omega)$,
$\psi\in \D(\Omega)$. Then:
\begin{itemize}
\item[(1)]\underline{Beurling case}. The following two conditions
are equivalent:
\begin{itemize}
\item[(1.a)]
for every $k\in\N$ there exists $C_k>0$ such that
\beqsn
|\widehat{\psi u}(\xi)|\leq C_ke^{-k\omega(|P(\xi)|^{1/m})}
\qquad\forall\xi\in \Gamma;
\eeqsn
\item[(1.b)]
for every $k\in\N$, $\ell>0$ there exists $C_{k,\ell}>0$ such that
\beqsn
|P(\xi)|^N|\widehat{\psi u}(\xi)|\leq C_{k,\ell}e^{k\varphi^*(Nm/k)}
(1+|\xi|)^{-\ell}
\qquad\forall N\in\N_0,\,\xi\in \Gamma.
\eeqsn
\end{itemize}
\item[(2)]\underline{Roumieu case}. The following two conditions
are equivalent:
\begin{itemize}
\item[(2.a)]
there exist $k\in\N$ and $C>0$ such that
\beqsn
|\widehat{\psi u}(\xi)|\leq Ce^{-\frac1k\omega(|P(\xi)|^{1/m})}
\qquad\forall\xi\in \Gamma;
\eeqsn
\item[(2.b)]
there exists $k\in\N$ such that for every $\ell>0$ there is
$C_\ell>0$ s.t.
\beqsn
|P(\xi)|^N|\widehat{\psi u}(\xi)|\leq C_\ell e^{\frac1k\varphi^*(Nmk)}
(1+|\xi|)^{-\ell}
\qquad\forall N\in\N_0,\,\xi\in \Gamma.
\eeqsn
\end{itemize}
\end{itemize}
\end{Lemma}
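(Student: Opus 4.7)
The plan is to prove both (1) and (2) via the Young conjugate duality between $\omega$ and $\varphi^{*}$, the Beurling and Roumieu cases being structurally identical and differing only in how the parameter is quantified. The two driving estimates, valid for every $r\geq 0$, $N\in\N_{0}$ and every $\lambda>0$, are
\[
r^{N}\leq e^{\lambda\varphi^{*}(Nm/\lambda)+\lambda\omega(r^{1/m})}\quad\text{and}\quad \inf_{N\in\N_{0}}\bigl(e^{\lambda\varphi^{*}(Nm/\lambda)}r^{-N}\bigr)\leq C\,e^{-\lambda\omega(r^{1/m})}.
\]
The first is Young's inequality $st\leq\varphi^{*}(s)+\varphi(t)$ applied at $s=Nm/\lambda$ and $t=(1/m)\log r$, using $\varphi(t)=\omega(e^{t})$. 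The second expresses its sharpness via $\varphi^{**}=\varphi$: the infimum over a continuous variable gives exactly $e^{-\lambda\omega(r^{1/m})}$, and restriction to integer $N$ costs only a bounded multiplicative constant, absorbed thanks to property $(\alpha)$ of $\omega$ and the convexity of $\varphi^{*}$.

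For the implication (b)$\Rightarrow$(a), I would fix the target parameter $\lambda$ in (a), apply (b) with a suitably larger parameter $\lambda_{1}$ and some auxiliary $\ell>0$, and take the infimum over $N\in\N_{0}$ in the bound of (b). For $\xi\in\Gamma$ with $P(\xi)\neq 0$, the second displayed inequality converts the infimum into $e^{-\lambda_{1}\omega(|P(\xi)|^{1/m})}\leq e^{-\lambda\omega(|P(\xi)|^{1/m})}$, and the residual $(1+|\xi|)^{-\ell}\leq 1$ is simply dropped. For $\xi\in\Gamma$ with $P(\xi)=0$, the $N=0$ instance of (b) already bounds $|\widehat{\psi u}(\xi)|$ by a constant, which matches (a) since $e^{-\lambda\omega(0)}=1$.

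For the reverse implication (a)$\Rightarrow$(b), I would fix $\lambda$ and $\ell$, apply (a) with a larger parameter $\lambda_{2}=\lambda+\lambda_{3}$, and multiply through by $|P(\xi)|^{N}$, bounding this via the first (Young) inequality to extract exactly the factor $e^{\lambda\varphi^{*}(Nm/\lambda)}$. This leaves a residual $e^{-\lambda_{3}\omega(|P(\xi)|^{1/m})}$ that has to dominate the required $(1+|\xi|)^{-\ell}$, and this is the main obstacle: one needs $\lambda_{3}\omega(|P(\xi)|^{1/m})\geq\ell\log(1+|\xi|)+O(1)$ for $|\xi|\to\infty$ in $\Gamma$. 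In the contexts where this lemma is applied (Theorems \ref{thB} and \ref{thR}) the polynomial $P$ is hypoelliptic, so Theorem \ref{thH}(8) gives $|P(\xi)|\geq\delta|\xi|^{d}$ for $|\xi|\gg 1$, and property $(\gamma)$ of $\omega$ ($\log t=o(\omega(t))$) then supplies exactly the required lower bound; for the bounded range of $|\xi|$ the inequality is absorbed in the constant, using that $\psi u\in\E'(\Omega)$ has a Fourier transform of polynomial growth. Tracking the quantifier on $\lambda$ yields the Beurling version as stated, and the Roumieu version follows verbatim with the substitution $\lambda\leftrightarrow 1/k$ as dictated by the seminorms.
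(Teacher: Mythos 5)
Your overall strategy is the same as the paper's: both directions reduce to Young conjugate duality between $\omega$ and $\varphi^{*}$, and your (a)$\Rightarrow$(b) argument is essentially what the paper does, including the correct observation that hypoellipticity of $P$ (not stated in the Lemma, but implicit from its context of use) is needed there to pass from $|\xi|^{\ell}$ to a power of $|P(\xi)|$ via $|P(\xi)|\geq\delta|\xi|^{d}$.

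However, your second ``driving estimate'' is false as written. The continuous infimum $\inf_{s\geq 0} e^{\lambda\varphi^{*}(sm/\lambda)}r^{-s}$ equals $e^{-\lambda\omega(r^{1/m})}$ exactly, but restricting $s$ to $\N_{0}$ inflates it by a factor that grows like $r$ itself (take $N=\lfloor s^{*}\rfloor$: $r^{-N}\leq r\cdot r^{-s^{*}}$), so no constant $C$ uniform in $r$ can appear on the right-hand side. Your remark that the loss is ``a bounded multiplicative constant, absorbed thanks to property $(\alpha)$ of $\omega$ and the convexity of $\varphi^{*}$'' is therefore not correct; the unbounded factor $r=|P(\xi)|$ must be absorbed by taking a strictly larger parameter $\lambda_{1}$ in (b) and invoking property $(\gamma)$ ($\log t=o(\omega(t))$), which you do gesture at but do not cleanly implement. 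The paper sidesteps this discretization loss entirely by choosing $\ell=m$ in (b): the bound $|P(\xi)|\leq c|\xi|^{m}$ (valid for any degree-$m$ polynomial, no hypoellipticity needed) converts $|\xi|^{m}|P(\xi)|^{N}$ into $c^{-1}|P(\xi)|^{N+1}$, and the extra power $(|P(\xi)|^{1/m})^{(N+1)m}$ then dominates $(|P(\xi)|^{1/m})^{s}$ for $Nm\leq s<(N+1)m$ while $\varphi^{*}(Nm/k)\leq\varphi^{*}(s/k)$, so the supremum over $N\in\N_{0}$ already bounds the continuous supremum with no loss at all. This is cleaner than absorbing the factor via $(\gamma)$, and it also keeps (b)$\Rightarrow$(a) free of any hypoellipticity hypothesis; you should replace your second displayed inequality by this device, or at minimum correct the absorption argument to rely on $(\gamma)$ and a strict increase of the parameter.
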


\begin{proof}[Proof of Lemma \ref{lemmaBR}]
\underline{Beurling case}. $(1.a)\Rightarrow(1.b)$:
Since $P$ is hypoelliptic, by \eqref{H1}, $(1.a)$,
\cite[Lemma 16(i)]{BJJ} and the convexity of
$\varphi^*$, we have, for all $\ell>0$ and $N\in\N_0$,
\beqsn
|\xi|^\ell|P(\xi)|^N|\widehat{\psi u}(\xi)|\leq&&
\delta^{-\ell/d}|P(\xi)|^{\frac{\ell}{d}+N}|\widehat{\psi u}(\xi)|\\
\leq&&\delta^{-\ell/d}(|P(\xi)|^{\frac1m})^{\frac{\ell m}{d}+mN}C_{2k}
e^{-2k\omega(|P(\xi)|^{\frac1m})}\\
\leq&&C_{k,\ell}e^{k\varphi^*\left(\frac{Nm}{k}\right)}.
\eeqsn
$(1.b)\Rightarrow(1.a)$:
Assume first that $|P(\xi)|>1$. By $(1.b)$ with $\ell=m$ we have that
for every $k$ there exists a constant $C_{k,m}=C_k>0$ such that
\beqsn
|\xi|^m|P(\xi)|^Ne^{-k\varphi^*\left(\frac{Nm}{k}\right)}
|\widehat{\psi u}(\xi)|\leq C_k\qquad
\forall N\in\N,\,\xi\in\Gamma.
\eeqsn
Since $|P(\xi)|\leq c|\xi|^m$ for some $c>0$, we thus have that
\beqs
\label{L1}
\sup_{N\in\N_0}\left\{(|P(\xi)|^{1/m})^{(N+1)m}
e^{-k\varphi^*\left(\frac{Nm}{k}\right)}\right\}
|\widehat{\psi u}(\xi)|\leq C'_k
\eeqs
for some $C'_k>0$. But for all $s>0$ there exists $N\in\N_0$ such that
$Nm\leq s<(N+1)m,$ so that
\beqsn
\sup_{N\in\N_0}\left\{(|P(\xi)|^{1/m})^{(N+1)m}
e^{-k\varphi^*\left(\frac{Nm}{k}\right)}\right\}
\geq&&\sup_{s>0}\left\{(|P(\xi)|^{1/m})^s
e^{-k\varphi^*\left(\frac sk\right)}\right\}\\
=&&\exp\{k\varphi(\log|P(\xi)|^{1/m})\}
=e^{k\omega(|P(\xi)|^{1/m})}.
\eeqsn
Substituting in \eqref{L1} we have that
\beqsn
e^{k\omega(|P(\xi)|^{1/m})}|\widehat{\psi u}(\xi)|\leq C'_k
\qquad\forall\xi\in\Gamma \ \mbox{with}\ |P(\xi)|>1.
\eeqsn

If $|P(\xi)|\leq1$ then $\omega(|P(\xi)|^{1/m})\equiv0$ and the thesis
is trivial.

\underline{Roumieu case}. It is similar to the Beurling case.
\end{proof}

\begin{proof}[Proof of Theorem \ref{thB}]
$(1)\Leftrightarrow(2)$ was proved in
\cite[Prop. 6]{BJJ}.

$(1)\Rightarrow(3)$: Let $u\in\E_{(\omega)}^P(U)$ and take $\psi\in
\D_{\{\sigma\}}(U)$ with $\psi\equiv1$ in a neighborhood $V\subset U$ of
$x_0$.
Since $\omega(t^\gamma)=o(\sigma(t))$, by Proposition 17 of \cite{BJJ}
we have that $\psi u\in\D_{(\omega)}^P(U)$.

By the Paley-Wiener Theorem \ref{thPW}, for all $\lambda>0$
there exists $C_\lambda>0$ such that
\beqsn
\int_{\R^n}|\widehat{\psi u}(\xi)|^2e^{4\lambda\omega(|P(\xi)|^{1/m})}d\xi
\leq C_\lambda
\eeqsn
and hence, by Lemma \ref{lemmaPW} for $K=\supp\psi$,
\beqsn
|\widehat{\psi u}(\zeta)|\leq m(K)^{1/2}C_\lambda D_{\lambda,\omega}
e^{H_K(\Im\zeta)-\lambda\omega(|P(\zeta)|^{1/m})}
\qquad\forall\zeta\in\C^n.
\eeqsn
For $\lambda=k\in\N$ and $\zeta=\xi\in\R^n$ we thus obtain \eqref{B1}.

$(3)\Rightarrow(1)$:
From \eqref{B1} it follows that for every $\lambda>0$
\beqsn
C_\lambda:=\left(\int_{\R^n}|\widehat{\psi u}(\xi)|^2
e^{2\lambda\omega(|P(\xi)|^{1/m})}d\xi\right)^{1/2}<+\infty.
\eeqsn
Therefore, by Lemma \ref{lemmaPW}, we obtain
\beqsn
\|P^j(D)(\psi u)\|_{2,\R^n}
\leq(2\pi)^{-n/2}C_\lambda e^{\lambda\varphi^*\left(\frac{jm}{\lambda}\right)}
\qquad\forall j\in\N_0.
\eeqsn
Since $\psi\equiv1$ in a neighborhood $U$ of $x_0$, then for every
compact $K\subset\subset U$
\beqsn
\|P^j(D)u\|_{2,K}=\|P^j(D)(\psi u)\|_{2,K}
\leq\|P^j(D)(\psi u)\|_{2,\R^n}
\leq C'_\lambda e^{\lambda\varphi^*\left(\frac{jm}{\lambda}\right)}
\qquad\forall j\in\N_0,
\eeqsn
for some $C'_\lambda>0$, i.e. $u\in\E_{(\omega)}^P(U)$.

$(3)\Leftrightarrow(4)$ follows from Lemma \ref{lemmaBR} for
$\Gamma=\R^n$.
\end{proof}

\begin{proof}[Proof of Theorem \ref{thR}]
It is analogous to that of Theorem \ref{thB} for $\lambda=1/k$.
\end{proof}

Let us now consider the wave front set with respect to the iterates of an
operator. We recall the following definition from \cite{BJJ}:
\begin{Def}
\label{defWF1}
Let $P(D)$ be a hypoelliptic linear partial differential operator with
constant coefficients, $\omega$ a non-quasinanlytic weight function,
$\Omega$ an open subset of $\R^n$, $u\in\D'(\Omega)$.
We say that a point $(x_0,\xi_0)\in\Omega\times(\R^n\setminus\{0\})$ is not
in the $(\omega)$-wave front set $\WF_{(\omega)}^P(u)$ (resp.
$\{\omega\}$-wave front set $\WF_{\{\omega\}}^P(u)$) with respect to
the iterates of $P$, if there are a neighborhood $U$ of $x_0$, an open
conic neighborhood $\Gamma$ of $\xi_0$ and a sequence $\{f_N\}_{N\in\N}
\subset\E'(\Omega)$ such that $(i)$ and $(ii)$ (resp. $(i)$ and $(iii)$)
of the following conditions hold:
\begin{itemize}
\item[(i)] $f_N=P(D)^Nu$ in $U$.
\item[(ii)]
\underline{Beurling case}:
\begin{itemize}
\item[(a)]
there exist $M,C>0$ such that for all $k\in\N$ there is $C_k>0$:
\beqs
\label{B4}
|\widehat{f}_N(\xi)|\leq C_k C^N(e^{\frac{k}{Nm}\varphi^*\left(\frac{Nm}{k}\right)}
+|\xi|)^{Nm}(1+|\xi|)^M
\qquad\forall N\in\N,\, \xi\in\R^n;
\eeqs
\item[(b)]
for every $\ell\in\N_0$, $k\in\N$ there exists $C_{k,\ell}>0$ such that
\beqs
\label{B5}
|\widehat{f}_N(\xi)|\leq C_{k,\ell}e^{k\varphi^*(Nm/k)}
(1+|\xi|)^{-\ell}
\qquad\forall N\in\N,\,\xi\in\Gamma.
\eeqs
\end{itemize}
\item[(iii)]
\underline{Roumieu case}:
\begin{itemize}
\item[(a)]
there exist $k\in\N$, $M,C>0$ such that
\beqs
\label{R4}
|\widehat{f}_N(\xi)|\leq C^N(e^{\frac{1}{Nmk}\varphi^*(Nmk)}
+|\xi|)^{Nm}(1+|\xi|)^M
\qquad\forall N\in\N,\, \xi\in\R^n;
\eeqs
\item[(b)]
there exists $k\in\N$ such that
for every $\ell\in\N_0$ there is $C_\ell>0$ s.t.
\beqs
\label{R5}
|\widehat{f}_N(\xi)|\leq C_\ell e^{\frac1k\varphi^*(Nmk)}
(1+|\xi|)^{-\ell}
\qquad\forall N\in\N,\,\xi\in\Gamma.
\eeqs
\end{itemize}
\end{itemize}
\end{Def}

Now we want to characterize the $\omega$-wave front set in terms of condition
\eqref{B1} for the Beurling case, and \eqref{R1} for the Roumieu case. To do this, we first give the following definition of wave front set
and we prove in Theorem \ref{propWF} below its equivalence to
the one of Definition \ref{defWF1}:
\begin{Def}
Let $P(D)$ be a hypoelliptic linear partial differential
operator with constant coefficients; let $\omega$ be a non-quasianalytic
weight function with
$\omega(t^\gamma)=o(\sigma(t))$, as $t\to+\infty$, where $\gamma$ is the
constant defined in \eqref{gamma} and $\sigma(t)=t^{1/s}$ for some
$s>1$; let $\Omega$ be an open subset of $\R^n$ and $u\in\D'(\Omega)$.

We say that $(x_0,\xi_0)\in\Omega\times(\R^n\setminus\{0\})$ is not
in the $(\omega)$-wave front set $\WF_{(\omega),P}(u)$ (resp.
$\{\omega\}$-wave front set $\WF_{\{\omega\},P}(u)$) with respect
to the iterates of $P$, if there exist a neighborhood $U$ of $x_0$,
an open conic neighborhood $\Gamma$ of $\xi_0$ and
$\psi\in\D_{\{\sigma\}}(\Omega)$ with $\psi\equiv1$ in $U$ such that
the following condition $(i)$ (resp. $(ii)$) holds:
\begin{itemize}
\item[(i)]
\underline{Beurling case}:
For every $k\in\N$ there exists $C_k>0$ such that
\beqs
\label{B3}
|\widehat{\psi u}(\xi)|\leq C_ke^{-k\omega(|P(\xi)|^{1/m})}
\qquad\forall\xi\in\Gamma.
\eeqs
\item[(ii)]
\underline{Roumieu case}:
There exist $k\in\N$, $C>0$ such that
\beqs
\label{R3}
|\widehat{\psi u}(\xi)|\leq Ce^{-\frac1k\omega(|P(\xi)|^{1/m})}
\qquad\forall\xi\in\Gamma.
\eeqs
\end{itemize}
\end{Def}

In order to prove that $\WF_*^P(u)=\WF_{*,P}(u)$, for $*=(\omega)$ or
$\{\omega\}$, let's start by the following:

\begin{Lemma}
\label{prop39}
Let $P(D)$ be a hypoelliptic linear partial differential operator of order $m$
with constant coefficients;
let $\omega$ be a non-quasianalytic weight function such that
$\omega(t^\gamma)=o(\sigma(t))$, as $t\to+\infty$, where $\gamma$ is the
constant defined in \eqref{gamma} and $\sigma(t)=t^{1/s}$ for some
$s>1$; let $\Omega$ be an open subset of $\R^n$ and $u\in\D'(\Omega)$.
Then
\beqsn
\WF_*^P(u)\subseteq\WF_{*,P}(u),
\eeqsn
for $*=(\omega)$ or $\{\omega\}$,
\end{Lemma}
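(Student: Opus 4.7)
The plan is to start from a point $(x_0,\xi_0)\notin\WF_{*,P}(u)$ and construct a sequence $\{f_N\}\subset\E'(\Omega)$ witnessing $(x_0,\xi_0)\notin\WF_*^P(u)$ in the sense of Definition \ref{defWF1}. By hypothesis there exist a neighborhood $U$ of $x_0$, a conic neighborhood $\Gamma$ of $\xi_0$ and $\psi\in\D_{\{\sigma\}}(\Omega)$ with $\psi\equiv 1$ on $U$ satisfying \eqref{B3} (Beurling) or \eqref{R3} (Roumieu). The natural choice is
\[
f_N:=P(D)^N(\psi u),\qquad N\in\N_0,
\]
which lies in $\E'(\Omega)$ because $\psi u$ has compact support, and coincides with $P(D)^N u$ on $U$ since $\psi\equiv 1$ there; this yields condition $(i)$.

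For the conic estimate $(ii.b)$/$(iii.b)$ I would use the identity
\[
\widehat{f}_N(\xi)=P(\xi)^N\,\widehat{\psi u}(\xi),
\]
and invoke Lemma \ref{lemmaBR} applied to the cone $\Gamma$. In the Beurling case the hypothesis \eqref{B3} is precisely $(1.a)$ of the lemma, so $(1.b)$ furnishes the bound
\[
|P(\xi)|^N|\widehat{\psi u}(\xi)|\leq C_{k,\ell}e^{k\varphi^*(Nm/k)}(1+|\xi|)^{-\ell}\qquad\forall N\in\N_0,\ \xi\in\Gamma,
\]
which is exactly \eqref{B5}. The Roumieu case is identical using $(2.a)\Rightarrow(2.b)$ of the same lemma.

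For the global estimate $(ii.a)$/$(iii.a)$ I would exploit that $\psi u\in\E'(\Omega)$, so its Fourier transform has polynomial growth: there exist $C,M>0$ with $|\widehat{\psi u}(\xi)|\leq C(1+|\xi|)^M$ for all $\xi\in\R^n$. Combined with the trivial estimate $|P(\xi)|\leq C'(1+|\xi|)^m$, this gives
\[
|\widehat{f}_N(\xi)|\leq C(C')^N(1+|\xi|)^{Nm+M}\qquad\forall N\in\N,\ \xi\in\R^n.
\]
Since $\varphi^*\geq 0$, we have $e^{\frac{k}{Nm}\varphi^*(Nm/k)}\geq 1$, hence $(1+|\xi|)^{Nm}\leq (e^{\frac{k}{Nm}\varphi^*(Nm/k)}+|\xi|)^{Nm}$, which converts the bound above into the form \eqref{B4} (with constants $M,C'$ independent of $k$, and $C_k:=C$). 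The same elementary step with $k$ replaced by $1/k$ handles \eqref{R4}.

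The verification is essentially mechanical once Lemma \ref{lemmaBR} is in hand, so there is no serious obstacle; the only subtle point is noticing that $e^{\frac{k}{Nm}\varphi^*(Nm/k)}\geq 1$ is exactly what makes the $|\xi|$-polynomial control in $(ii.a)$/$(iii.a)$ follow for free from the standard order estimate on $\widehat{\psi u}$, and that the definition of $\WF_{*,P}(u)$ is formulated with $\psi\in\D_{\{\sigma\}}$ (rather than merely smooth) so as to guarantee the compact support needed to define $f_N$.
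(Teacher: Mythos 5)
Your proposal is correct and follows essentially the same route as the paper: you take $f_N=P(D)^N(\psi u)$, use the compact support and the order bound for $\widehat{\psi u}$ to get the global condition $(ii.a)$/$(iii.a)$ (via the elementary remark $e^{\frac{k}{Nm}\varphi^*(Nm/k)}\ge 1$, which the paper states but does not spell out), and then obtain the conic estimate $(ii.b)$/$(iii.b)$ from the Paley--Wiener-type bound \eqref{B3}/\eqref{R3}. The only cosmetic difference is that you invoke Lemma \ref{lemmaBR} for the conic estimate whereas the paper reproduces that same calculation directly using \eqref{H1} and \cite[Lemma 16(i)]{BJJ} together with the convexity of $\varphi^*$; the underlying argument is identical.
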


\begin{proof}
\underline{Beurling case}.
Let $(x_0,\xi_0)\notin\WF_{(\omega),P}(u)$.
There exist then a neighborhood $U$ of $x_0$, an open conic neighborhood
neighborhood $\Gamma$ of $\xi_0$ and $\psi\in\D_{\{\sigma\}}(\Omega)$
with $\psi\equiv1$ in $U$ satisfying \eqref{B3}.
Setting $f_N=P(D)^N(\psi u)$ we have that $f_N\in\E'(\Omega)$,
$f_N=P(D)^Nu$ in $U$ and
\beqs
\label{B6}
|\widehat{f}_N(\xi)|\leq|P(\xi)|^N|\widehat{\psi u}(\xi)|
\leq C^N(1+|\xi|)^{Nm}(1+|\xi|)^M
\qquad\forall\xi\in\R^n
\eeqs
for some $M>0$, since $\psi u\in\E'(\R^n)$.
Clearly \eqref{B6} implies \eqref{B4}.

To prove \eqref{B5} take $k\in\N$, $\ell>0$ and $\xi\in\Gamma$.
By \eqref{H1} and \eqref{B3} we have
\beqsn
|\xi|^\ell|\widehat{f}_N(\xi)|\leq&&\delta^{-\ell/d}|P(\xi)|^{\ell/d}|P(\xi)|^N
|\widehat{\psi u}(\xi)|\\
\leq&&\delta^{-\ell/d} C_{2k}(|P(\xi)|^{1/m})^{\frac{\ell m}{d}+Nm}
e^{-2k\omega(|P(\xi)|^{1/m})}\\
\leq&&C_{k,\ell}e^{k\varphi^*\left(\frac{Nm}{k}\right)},
\eeqsn
by \cite[Lemma 16(i)]{BJJ} and the convexity of
$\varphi^*$.

This proves \eqref{B5} and hence  $(x_0,\xi_0)\notin\WF_{(\omega)}^P(u)$.

\underline{Roumieu case}. It's similar to the Beurling case.
\end{proof}

We recall, from \cite[Lemma 4]{FGJ} (see also \cite[Proposition 3.4.4]{R}),
the following lemma that we
shall need later:
\begin{Lemma}
\label{lemma4FGJ}
Let $\Gamma$ and $\Gamma'$ be two cones in $\R^n$ such that
$\Gamma'\subset\subset\Gamma$ in the sense that $\Gamma'\cap S^{n-1}
\subset\subset\Gamma\cap S^{n-1}$, where $S^{n-1}$ is the unit sphere
in $\R^n$.

Then there exists a bounded $\phi\in\E_{(\omega)}(\R^n)\subset
\E_{\{\omega\}}(\R^n)$ with $\supp\phi\subset\Gamma$, $\phi\equiv1$ on
$\Gamma'$ (for large $|\xi|$), which is the symbol of a pseudo-differential
operator $\phi(D)$ satisfying
\beqs
\label{phihat}
\widehat{\phi(D)u}(\xi)=\phi(\xi)\widehat{u}(\xi)\qquad
 u\in\D_{(\omega)}(\R^n),\ \xi\in\R^n.
\eeqs
\end{Lemma}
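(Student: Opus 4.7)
The plan is to construct $\phi$ explicitly as a smoothed homogeneous extension of a BMT cutoff on the sphere. Since $\Gamma'\cap S^{n-1}\subset\subset\Gamma\cap S^{n-1}$, I would first choose an intermediate open set $V$ on the sphere with $\Gamma'\cap S^{n-1}\subset\subset V\subset\subset\Gamma\cap S^{n-1}$. By the non-quasianalyticity of $\omega$ (property $(\beta)$) and the standard Braun--Meise--Taylor construction of $\omega$-ultradifferentiable partitions of unity on compact manifolds (\cite{BMT}), there exists $\tilde\chi\in\E_{(\omega)}(S^{n-1})$ with $\supp\tilde\chi\subset\Gamma\cap S^{n-1}$ and $\tilde\chi\equiv1$ on $\overline{V}$. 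In parallel, I would pick a one-variable cutoff $g\in\E_{(\omega)}(\R)$ with $g(r)=0$ for $r\leq 1/2$ and $g(r)=1$ for $r\geq 1$, which again exists by non-quasianalyticity.

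Next I would define
\[
\phi(\xi):= g(|\xi|)\,\tilde\chi\!\left(\frac{\xi}{|\xi|}\right)\quad(\xi\neq 0),\qquad \phi(0):=0.
\]
Because $g(|\xi|)$ vanishes on a neighborhood of the origin, the singularity of the degree-$0$ homogeneous extension $\xi\mapsto\tilde\chi(\xi/|\xi|)$ at $0$ is absorbed, so $\phi$ is well-defined and of class $C^\infty$ on all of $\R^n$. It is bounded, $\supp\phi\subset\Gamma$ by construction, and $\phi(\xi)=1$ whenever $\xi\in\Gamma'$ and $|\xi|\geq 1$, giving the claimed geometric properties.

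To check $\phi\in\E_{(\omega)}(\R^n)$ it suffices to bound the BMT seminorms $p_{K,\lambda}(\phi)$ on an arbitrary compact $K\subset\R^n$. Near $0$ the function is identically zero, so those estimates are trivial; on a compact set bounded away from $0$, applying the chain rule (or Fa\`a di Bruno's formula) to $\tilde\chi(\xi/|\xi|)$ yields, for each multi-index $\alpha$, a bound by a constant times $|\xi|^{-|\alpha|}$ times a sum of products of sup-norms of derivatives of $\tilde\chi$ of total order at most $|\alpha|$. Since $\tilde\chi\in\E_{(\omega)}(S^{n-1})$, these sup-norms satisfy the BMT-type bounds $\exp(\lambda\varphi^*(|\alpha|/\lambda))$ for every $\lambda>0$, and the factor $|\xi|^{-|\alpha|}$ contributes only a harmless constant on the chosen compact set; the factor $g(|\xi|)$ and its derivatives bring in only factors of the same BMT type. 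Combined, this gives $p_{K,\lambda}(\phi)<\infty$ for every $\lambda>0$, which is the Beurling condition, and it automatically implies membership in the larger Roumieu class.

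Finally, for the pseudo-differential operator identity \eqref{phihat}: given $u\in\D_{(\omega)}(\R^n)$, the Paley--Wiener theorem for $\D_{(\omega)}$ (see e.g.~\cite{BMT}) ensures that $\widehat{u}(\xi)$ decays faster than $e^{-\lambda\omega(\xi)}$ for every $\lambda>0$. Since $\phi$ is bounded, the product $\phi\widehat{u}$ is integrable (in fact rapidly decreasing), so one defines $\phi(D)u:=\F^{-1}(\phi\widehat{u})$, after which \eqref{phihat} holds by construction. The only genuinely technical step is the Fa\`a di Bruno verification of the BMT seminorm bounds for the homogeneous extension; controlling all combinatorial factors against $\varphi^*$ there, using property $(\alpha)$ of $\omega$, is the main point that requires care.
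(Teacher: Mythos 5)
The paper does not actually prove this lemma; it is cited from \cite[Lemma 4]{FGJ} and \cite[Proposition 3.4.4]{R}, so there is no ``paper's proof'' to compare against. Your construction (multiply a $0$-homogeneous extension of an $\E_{(\omega)}$ cutoff on $S^{n-1}$ by a radial cutoff that kills the singularity at the origin) is the standard one used in those references, and the geometric properties $\supp\phi\subset\Gamma$, $\phi\equiv 1$ on $\Gamma'$ for $|\xi|\geq 1$ come out exactly as you say.

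There is, however, a real gap in the verification. You only establish that $p_{K,\lambda}(\phi)<\infty$ for each compact $K$, i.e. $\phi\in\E_{(\omega)}(\R^n)$. But the statement (together with the Remark following it, which points to \cite[Def.~3]{FGJ}) requires $\phi$ to be a \emph{symbol} of a pseudodifferential operator of Beurling type, and in \cite{FGJ} that means a \emph{global} estimate of the form
$\sup_{\xi\in\R^n}|D^\alpha\phi(\xi)|(1+|\xi|)^{|\alpha|}\leq C_\lambda e^{\lambda\varphi^*(|\alpha|/\lambda)}$ for every $\lambda>0$.
Mere membership in $\E_{(\omega)}(\R^n)$ does not give this, since it only controls seminorms on fixed compacts. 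The homogeneity of $\tilde\chi(\xi/|\xi|)$ is precisely what yields the missing uniform decay: $D^\alpha\bigl[\tilde\chi(\xi/|\xi|)\bigr]$ is homogeneous of degree $-|\alpha|$, so for $|\xi|\geq 1$ one has $|D^\alpha\phi(\xi)|\leq|\xi|^{-|\alpha|}\sup_{|\eta|=1}|D^\alpha\phi(\eta)|$, and it is the supremum over the unit sphere that must be estimated by $C_\lambda e^{\lambda\varphi^*(|\alpha|/\lambda)}$. You obtained this factor $|\xi|^{-|\alpha|}$ in your Fa\`a di Bruno computation, but then explicitly discarded it as ``a harmless constant on the chosen compact set'' -- whereas it is the crucial ingredient for the global symbol estimate, and the estimate must be taken over all of $\R^n$, not just over a compact set. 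With that change (and writing out the Fa\`a di Bruno bookkeeping, or invoking closure of BMT classes under composition with real-analytic maps to control $\sup_{|\eta|=1}|D^\alpha\phi(\eta)|$), the proof is complete; without it, \eqref{phihat} defines an operator $\phi(D)$ whose symbol has not been shown to lie in the symbol class of \cite{FGJ}, so the lemma as stated is not fully justified.
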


\begin{Rem}
\begin{em}
Here, the definition of pseudodifferential operator is as in \cite[Def. 3]{FGJ}. Then, we must consider the symbol of the operator $\phi(D)$ as $(2\pi)^{-n} \phi(\xi)$ (compare with the beginning of the proof of \cite[Theorem 2]{FGJ}).
\end{em}
\end{Rem}

We can now prove the following result:
\begin{Th}
\label{thm1}
Let $P(D)$ be a hypoelliptic linear partial differential operator of
order $m$ with constant coefficients; let $\omega$ be a non-quasianalytic
weight function such that $\omega(t^\gamma)=o(\sigma(t))$, as $t\to+\infty$,
where $\gamma$ is the constant defined in \eqref{gamma} and $\sigma(t)=t^{1/s}$
for some $s>1$. Let $\Omega$ be an open subset of $\R^n$,
$u\in\E'(\Omega)$ and $(x_0,\xi_0)\in\Omega\times(\R^n\setminus\{0\})$.

Then $(x_0,\xi_0)\notin\WF_*^P(u)$ if and only if there exists
$\phi$ as in Lemma \ref{lemma4FGJ} such that $\left.\phi(D)u\right|_V
\in\E_*^P(V)$ for some neighborhood $V$ of $x_0$, where $*=(\omega)$
or $\{\omega\}$.
\end{Th}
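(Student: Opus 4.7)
The plan is to translate the condition $\phi(D)u\in\E_*^P(V)$ into a Fourier decay condition via Theorems \ref{thB}, \ref{thR} and Lemma \ref{lemmaBR}, match it against Definition \ref{defWF1}, and bridge the remaining gap with convolution estimates that exploit the off-diagonal regularity of $\phi(D)$ and the $\{\sigma\}$-regularity of suitable cutoffs. To set things up, I fix nested open conic neighborhoods $\Gamma'\subset\subset\Gamma\subset\subset\Gamma_0$ of $\xi_0$ (with $\Gamma_0$ the cone of Definition \ref{defWF1}), apply Lemma \ref{lemma4FGJ} to produce $\phi$ with $\supp\phi\subset\Gamma$ and $\phi\equiv 1$ on $\Gamma'$ for large $|\xi|$, and pick nested open neighborhoods $V\subset\subset V'\subset\subset U$ of $x_0$ together with cutoffs $\chi,\tilde\chi\in\D_{\{\sigma\}}(\Omega)$ satisfying $\chi\equiv 1$ on $V$, $\supp\chi\subset V'$, $\tilde\chi\equiv 1$ on $V'$, $\supp\tilde\chi\subset U$.

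For the implication $(\Leftarrow)$, assuming $\phi(D)u\in\E_*^P(V)$, Theorem \ref{thB}/\ref{thR} gives $|\widehat{\chi\phi(D)u}(\xi)|\leq C_k e^{-k\omega(|P(\xi)|^{1/m})}$ (for every $k$ in the Beurling case, for some $k$ in the Roumieu case). Writing
\[
\widehat{\chi u}(\xi)=\widehat{\chi\phi(D)u}(\xi)+\widehat{\chi(I-\phi(D))u}(\xi),
\]
I estimate the remainder as a convolution integral in $\eta$ whose integrand is supported on $\eta\notin\Gamma'$ for large $|\eta|$; for $\xi$ in a smaller cone $\Gamma''\subset\subset\Gamma'$ the cone separation $|\xi-\eta|\geq c(|\xi|+|\eta|)$, combined with the $\{\sigma\}$-decay of $\widehat{\chi}$, the polynomial growth of $\widehat{u}$ (from $u\in\E'(\Omega)$), and the hypothesis $\omega(t^\gamma)=o(\sigma(t))$, dominates any $e^{-k\omega(|P(\xi)|^{1/m})}$. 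Hence $\widehat{\chi u}$ inherits the same decay in $\Gamma''$. Setting $f_N:=P(D)^N(\chi u)\in\E'(\R^n)$ yields $f_N=P(D)^Nu$ on $V$ and $\widehat{f_N}(\xi)=P(\xi)^N\widehat{\chi u}(\xi)$; Lemma \ref{lemmaBR} converts the decay of $\widehat{\chi u}$ in $\Gamma''$ into estimate (b) of Definition \ref{defWF1}, while estimate (a) follows from $\chi u\in\E'(\R^n)$ and $|P(\xi)|\leq c(1+|\xi|)^m$. Therefore $(x_0,\xi_0)\notin\WF_*^P(u)$.

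For $(\Rightarrow)$, I use the $\{f_N\}$ provided by Definition \ref{defWF1} and show $\phi(D)u\in\E_*^P(V)$ by bounding $\|P(D)^N(\chi\phi(D)u)\|_{2,\R^n}$ and invoking Lemma \ref{lemmaPW}. A Leibniz-type expansion, together with the commutativity of $\phi(D)$ and $P(D)$, produces
\[
P(D)^N(\chi\phi(D)u)=\chi\phi(D)f_N-\chi\phi(D)g_N+R_N,
\]
where $g_N:=f_N-P(D)^Nu\in\E'(\Omega)$ is supported in $\Omega\setminus U$ and $R_N$ collects the commutator terms $[P(D)^N,\chi]\phi(D)u$ plus the $[\phi(D),\chi]$-type contributions. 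By Plancherel, $\|\chi\phi(D)f_N\|_2$ reduces to the convolution $\widehat{\chi}*(\phi\widehat{f_N})$, which I split according as $\eta\in\Gamma_0$ (using the in-cone decay (b)/(R5) of $\widehat{f_N}$, with the optimization over $N$ that converts $|P(\eta)|^{-N}e^{k\varphi^*(Nm/k)}$ into $e^{-k\omega(|P(\eta)|^{1/m})}$) or $\eta\notin\Gamma_0$ (using the polynomial bound (a)/(R4) of $\widehat{f_N}$ together with the cone separation between $\supp\phi\subset\Gamma\subset\subset\Gamma_0$ and $\Gamma_0^c$, absorbed by the $\{\sigma\}$-decay of $\widehat{\chi}$). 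The term $\chi\phi(D)g_N$ is controlled by the off-diagonal regularity of the Schwartz kernel of $\phi(D)$ in the $\omega$-pseudodifferential calculus of \cite{FGJ}, the positive distance between $V'$ and $\supp g_N$ absorbing the $N$-growth; $R_N$ is handled through the $\{\sigma\}$-regularity of $\chi$ together with tempered-distribution bounds on $\phi(D)u$. Aggregating yields $\|P(D)^N(\chi\phi(D)u)\|_{2,\R^n}\leq Ce^{\lambda\varphi^*(Nm/\lambda)}$ in the appropriate $\lambda$-range, so Lemma \ref{lemmaPW} gives \eqref{B1}/\eqref{R1} for $\chi\phi(D)u$, and Theorem \ref{thB}/\ref{thR} concludes $\phi(D)u\in\E_*^P(V)$. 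The principal obstacle is exactly the optimization step just described: the hypothesis $\omega(t^\gamma)=o(\sigma(t))$, combined with \eqref{gamma} and \cite[Lemma 16(i)]{BJJ}, is precisely what allows the $\sigma$-decay of $\widehat{\chi}$ to absorb the $\omega$-weighted losses arising both from this optimization and from the iterated commutator terms in $R_N$.
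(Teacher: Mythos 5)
Your implication ``$(\Leftarrow)$'' (assuming $\phi(D)u\in\E_*^P$ and deducing $(x_0,\xi_0)\notin\WF_*^P(u)$) follows essentially the same route as the paper's ``vice versa'' step: decompose with respect to $\phi(D)$ and $I-\phi(D)$, treat the first piece via Theorem~\ref{thB}/\ref{thR} (the paper equivalently quotes \cite[Prop.~17]{BJJ}), and estimate the second piece as a convolution with $\widehat\chi$ using the cone separation and $\omega(t^\gamma)=o(\sigma(t))$. Your version recombines the two Fourier estimates directly rather than appealing to subadditivity of the wave front set, which is a harmless repackaging. (Minor bookkeeping: to invoke Theorem~\ref{thB} for $\phi(D)u\in\E_*^P(V)$ the cutoff must be supported \emph{inside} $V$; your $\chi$ has $\supp\chi\subset V'\supsetneq V$, so your nesting should be adjusted.)

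The implication ``$(\Rightarrow)$'' is where the proposal genuinely diverges, and where there is a real gap. The paper's trick is to set $h_N:=\phi(D)f_N$ and observe that $\widehat{h}_N=\phi\,\widehat{f}_N$ already satisfies condition~\eqref{B0}/\eqref{R0} of Theorem~\ref{thB}/\ref{thR}: since $\supp\phi\subset\Gamma$ and $\phi$ is bounded, the cone estimate \eqref{B5}/\eqref{R5} on $\Gamma$ propagates to all of $\R^n$. No Leibniz expansion, no $L^2$ iterate norm, no commutators. You instead try to bound $\|P(D)^N(\chi\phi(D)u)\|_{2,\R^n}$ directly, which produces the remainder
\[
R_N=[P(D)^N,\chi]\,\phi(D)u
=\sum_{0<|\alpha|\le Nm}\frac{1}{\alpha!}\,D^\alpha\chi\cdot\bigl(\partial_\xi^\alpha P^N\bigr)(D)\,\phi(D)u.
\]
The operators $(\partial_\xi^\alpha P^N)(D)$ for $0<|\alpha|\le Nm$ are \emph{not} iterates of $P(D)$, so the hypothesis $(x_0,\xi_0)\notin\WF_*^P(u)$ gives no control over them applied to $\phi(D)u$. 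Your stated justification --- ``the $\{\sigma\}$-regularity of $\chi$ together with tempered-distribution bounds on $\phi(D)u$'' --- does not close this: the only a priori bound is $|\widehat{\phi(D)u}(\xi)|\lesssim(1+|\xi|)^M$, which yields $|(\partial_\xi^\alpha P^N)(\xi)\widehat{\phi(D)u}(\xi)|\lesssim C^N(1+|\xi|)^{Nm-|\alpha|+M}$. That is nowhere near $L^2$-integrable with a bound of the form $e^{\lambda\varphi^*(Nm/\lambda)}$; in fact $(\partial_\xi^\alpha P^N)(D)\phi(D)u$ need not even be in $L^2_{\loc}$ on $\supp\chi$ for a general $u\in\E'(\Omega)$. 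Controlling $R_N$ would require precisely the kind of estimate proved in \cite[Prop.~17]{BJJ} for the product of a $\D_{\{\sigma\}}$ function with an $\E^P_*$ function --- but in this direction you do not yet know that $\phi(D)u\in\E_*^P$, so that lemma is unavailable and you would be rederiving it from scratch. In addition, your bound for $\chi\phi(D)g_N$ (where $g_N:=f_N-P(D)^Nu$) via off-diagonal kernel regularity needs to beat an $N$-dependent polynomial growth order, and you have not indicated how the kernel decay of $\phi(D)$ is made quantitatively uniform in $N$. In short, the paper's forward direction sidesteps both issues entirely by working with $\{h_N\}$ and the $\{f_N\}$-characterization in Theorem~\ref{thB}, and your alternative introduces obstacles that the write-up does not resolve.
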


\begin{proof}
\underline{Beurling case}.
Let $(x_0,\xi_0)\notin\WF_{(\omega)}^P(u)$. There
exist a neighborhood
$U$ of $x_0$, an open conic neighborhood $\Gamma$ of $\xi_0$ and a sequence
$\{f_N\}_{N\in\N}\subset\E'(\Omega)$ with $f_N=P(D)^Nu$ in $U$ and
satisfying \eqref{B4} and \eqref{B5}.

Now, we consider a conic neigborhood $\Gamma'$ of $\xi_0$ with
$\Gamma'\subset\subset\Gamma$.
Take then $\phi$ as in Lemma \ref{lemma4FGJ} and define
$h_N:=\phi(D)f_N$.
Since $\phi(D)$ and $P(D)^N$ commute, we have that
$h_N=P(D)^N\phi(D)u$ in a neighborhood of $x_0$ and
$\widehat{h}_N(\xi)=\phi(\xi)\widehat{f}_N(\xi)$ satisfies
\eqref{B0} because of \eqref{B5} and $\phi$ bounded with $\supp\phi\subset
\Gamma$.
Therefore $\left.\phi(D)u\right|_V\in\E_{(\omega)}^P(V)$
for a neighborhood $V$ of $x_0$ by Theorem \ref{thB}.

Vice versa, let us now assume $\left.\phi(D)u\right|_V\in\E_{(\omega)}^P(V)$,
where $\phi$ is
as in Lemma \ref{lemma4FGJ}, for some neighborhood $V$ of $x_0$ and
for some neighborhoods $\Gamma'\subset\subset\Gamma$ of $\xi_0$.
Take $\varphi\in\D_{\{\sigma\}}(V)$ with $\varphi\equiv1$ in a
neighborhood of $x_0$ and write
\beqsn
\varphi u=\varphi\phi(D)u+\varphi(I-\phi(D))u=:u_1+u_2.
\eeqsn
Since $\left.\phi(D)u\right|_V\in\E_{(\omega)}^P(V)$ by assumption and
$\varphi\in\D_{\{\sigma\}}(V)$, we can apply \cite[Prop. 17]{BJJ} to get
$u_1\in\E_{(\omega)}^P(V)$. We thus have to prove that
$(x_0,\xi_0)\notin\WF_{(\omega)}^P(u_2)$. By Lemma \ref{prop39} it's
enough to prove that $(x_0,\xi_0)\notin\WF_{(\omega),P}(u_2)$. To do this, we have to find $\psi\in\D_{\{\sigma\}}(\Omega)$ with $\psi\equiv1$
in a neighborhood of $x_0$ and $\widehat{\psi u_2}(\xi)$
satisfying \eqref{B3} in a conic neighborhood of $\xi_0$.

Taking $\psi\in\D_{\{\sigma\}}(\Omega)$ with $\psi\equiv1$ on $\supp\varphi$
we have that $\psi u_2=u_2$ by the definition of $u_2$. Hence we will
prove \eqref{B3} for $\widehat{u}_2$. We have
\beqs
\nonumber
\widehat{u}_2=&&\F(\varphi(I-\phi(D))u)(\xi)
=(2\pi)^{-n}(\widehat{\varphi}\ast(1-\phi)\widehat{u})(\xi)\\
\label{u21}
=&&(2\pi)^{-n}\int\widehat{\varphi}(\xi-\eta)(1-\phi)(\eta)\widehat{u}(\eta)d\eta.
\eeqs
Let $\Lambda$ be a conic neigborhood of $\xi_0$ with $\bar{\Lambda}
\subset\Gamma'$, so that $(1-\phi)(\eta)=0$ in $\Lambda$.
For fixed $\xi\in\Lambda$ we set
\beqsn
&&A:=\{\eta\in\R^n:\ |\xi-\eta|\leq\delta(|\xi|+|\eta|)\}\\
&&B:=\{\eta\in\R^n:\ |\xi-\eta|>\delta(|\xi|+|\eta|)\}
\eeqsn
for $\delta>0$ small enough so that $A\subset\Gamma'$ and hence
$(1-\phi)(\eta)=0$ in $A$.

Splitting  the integral \eqref{u21} we have, for $\xi\in\Lambda$:
\beqs
\nonumber
\widehat{u}_2(\xi)=&&(2\pi)^{-n}\int_A\widehat{\varphi}(\xi-\eta)(1-\phi)(\eta)
\widehat{u}(\eta)d\eta+(2\pi)^{-n}
\int_B\widehat{\varphi}(\xi-\eta)(1-\phi)(\eta)\widehat{u}(\eta)d\eta\\
\label{161}
=&&(2\pi)^{-n}\int_B\widehat{\varphi}(\xi-\eta)(1-\phi)(\eta)\widehat{u}(\eta)
d\eta.
\eeqs
For $\xi\in\Lambda$ and $\eta\in B$, as
$\varphi\in\D_{\{\sigma\}}(\Omega)$ and
$\sigma(t)=t^{1/s}$, there exist
$C,C',\varepsilon,\varepsilon'>0$ such that
\beqsn
|\widehat{\varphi}(\xi-\eta)|\leq C e^{-\varepsilon\sigma(|\xi-\eta|)}
\leq C'e^{-\varepsilon'(\sigma(|\xi|)+\sigma(|\eta|))}.
\eeqsn
Moreover, $|\widehat{u}(\eta)|\leq C(1+|\eta|)^M$ for some $C,M>0$ since
$u\in\E'(\Omega)$, and therefore
\beqs
\nonumber
\left|\int_B\widehat{\varphi}(\xi-\eta)(1-\phi)(\eta)\widehat{u}(\eta)d\eta
\right|
\leq&& C'e^{-\varepsilon'(\sigma(\xi))}\int|1-\phi(\eta)|
|\widehat{u}(\eta)|e^{-\varepsilon'(\sigma(\eta))}d\eta\\
\nonumber
\leq&&De^{-\varepsilon'(\sigma(\xi))}\int(1+|\eta|)^Me^{-\varepsilon'(\sigma(\eta))}
d\eta\\
\label{162}
\leq&& D'e^{-\varepsilon'(\sigma(\xi))},
\eeqs
for some $D,D'>0$, because $1-\phi(\eta)$ is bounded.

Since $\omega(t^\gamma)=o(\sigma(t))$ with $\gamma\geq1$ (see Remark
\ref{rem22}), for every $k\in\N$ there exists $R_k>0$ such that
for all $|\xi|\geq R_k$:
\beqs
\label{163}
\omega(|P(\xi)|^{\frac1m})\leq\omega(c|\xi|)\leq L(\omega(\xi)+1)
\leq\frac1k\sigma(\xi)+L
\eeqs
for some $c,L>0$, and hence, from \eqref{162} and \eqref{161}, for
every $k\in\N$ there is $D_k>0$ such that:
\beqsn
|\widehat{u}_2(\xi)|\leq D_ke^{-k\omega(|P(\xi)|^{1/m})}\qquad\forall\xi\in\Lambda,
\eeqsn
proving that $(x_0,\xi_0)\notin\WF_{(\omega),P}(u_2)$ and hence
$(x_0,\xi_0)\notin\WF_{(\omega)}^P(u)$.

\underline{Roumieu case}.
The proof is analogous to that of the Beurling case.
If $(x_0,\xi_0)\notin\WF_{\{\omega\}}^P(u)$ we find a neighborhood
$U$ of $x_0$, an open conic neighborhood $\Gamma$ of $\xi_0$ and a sequence
$\{f_N\}_{N\in\N}\subset\E'(\Omega)$ satisfying $(i)$, $(iii)(a)$
and $(iii)(b)$ of Definition \ref{defWF1}.
As in the Beurling case $h_N:=\phi(D)f_N$ satisfies the desired estimate
\eqref{R0}, so that $\left.\phi(D)u\right|_V\in
\E_{\{\omega\}}^P(V)$ for a neighborhood $V$ of $x_0$ by Theorem \ref{thR}.

Conversely, if $\left.\phi(D)u\right|_V\in
\E_{\{\omega\}}^P(V)$ we proceed as in the Beurling case and obtain that
$u_1:=\varphi\phi(D)u\in\E_{\{\omega\}}^P(V)$ by Proposition 17 of \cite{BJJ}.
The proof that $(x_0,\xi_0)\notin\WF_{\{\omega\}}^P(u_2)$ is analogous to
the one in the Beurling case, applying \eqref{163} for $k=1$.
\end{proof}

\begin{Prop}
\label{lemma2'}
Let $P(D)$ be a hypoelliptic linear partial differential operator
with constant coefficients;
let $\omega$ be a non-quasianalytic weight function such that
$\omega(t^\gamma)=o(\sigma(t))$, as $t\to+\infty$, where $\gamma$ is the
constant defined in \eqref{gamma} and $\sigma(t)=t^{1/s}$ for some
$s>1$; let $\Omega$ be an open subset of $\R^n$ and $u\in\D'(\Omega)$.
If $\psi\in\D_{\{\sigma\}}(\Omega)$, then
\beqsn
\WF_*^P(\psi u)\subseteq\WF_*^P(u),
\eeqsn
for $*=(\omega)$ or $\{\omega\}$.
\end{Prop}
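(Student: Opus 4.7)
The plan is to mirror the proof of Theorem \ref{thm1} by splitting $\psi u$ via a pseudodifferential cutoff coming from the assumption $(x_0,\xi_0)\notin\WF_*^P(u)$, and then handling the two pieces independently. First I would reduce to the case $u\in\E'(\Omega)$: the statement is local at $x_0$, so if $x_0\notin\supp\psi$ there is nothing to prove; otherwise, choosing $\chi_0\in\D(\Omega)$ with $\chi_0\equiv 1$ on $\supp\psi$ gives $\psi u=\psi(\chi_0u)$, and $\chi_0u$ coincides with $u$ in a neighborhood of $x_0$, so by locality of $\WF_*^P$ (immediate from Definition \ref{defWF1}) I may replace $u$ by $\chi_0u\in\E'(\Omega)$. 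With this reduction in place, Theorem \ref{thm1} produces a symbol $\phi$ as in Lemma \ref{lemma4FGJ}, supported in a cone $\Gamma\ni\xi_0$ and equal to $1$ (for large $|\xi|$) on a smaller conic neighborhood $\Gamma'$ of $\xi_0$, together with a neighborhood $V$ of $x_0$ for which $\phi(D)u|_V\in\E_*^P(V)$. I then decompose
\[
\psi u=\psi\,\phi(D)u+\psi(I-\phi(D))u=:v_1+v_2,
\]
both belonging to $\E'(\Omega)$ since $\psi\in\D(\Omega)$.

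For $v_1$, Proposition 17 of \cite{BJJ} (whose hypothesis $\omega(t^\gamma)=o(\sigma(t))$ is our standing assumption) yields $v_1=\psi\cdot\phi(D)u\in\E_*^P(V)$; setting $f_N=P(D)^Nv_1\in\E'(\Omega)$ and invoking Theorem \ref{thB} (respectively Theorem \ref{thR}) shows that $\widehat{f_N}$ satisfies both families of estimates of Definition \ref{defWF1}, so $(x_0,\xi_0)\notin\WF_*^P(v_1)$. For $v_2$ I reproduce the computation carried out at the end of the proof of Theorem \ref{thm1} for its term $\widehat{u_2}$: writing
\[
\widehat{v_2}(\xi)=(2\pi)^{-n}\int\widehat\psi(\xi-\eta)(1-\phi(\eta))\widehat u(\eta)\,d\eta
\]
and choosing a conic neighborhood $\Lambda$ of $\xi_0$ with $\overline\Lambda\subset\Gamma'$, the splitting $\R^n=A\cup B$ with $A=\{|\xi-\eta|\leq\delta(|\xi|+|\eta|)\}$ and $\delta>0$ small makes $A\subset\Gamma'$ for $\xi\in\Lambda$, $|\xi|$ large, so the $A$-integral vanishes at high frequencies because $(1-\phi)|_{\Gamma'}\equiv 0$ there. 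On $B$ the angular separation $|\xi-\eta|\geq c(|\xi|+|\eta|)$ combined with the Paley--Wiener decay $|\widehat\psi(\zeta)|\leq Ce^{-\varepsilon\sigma(|\zeta|)}$ (because $\psi\in\D_{\{\sigma\}}$) and the polynomial bound $|\widehat u(\eta)|\leq C(1+|\eta|)^M$ (because $u\in\E'$) yields $|\widehat{v_2}(\xi)|\leq C e^{-\varepsilon'\sigma(|\xi|)}$ on $\Lambda$ for $|\xi|$ large. Since $\omega(|P(\xi)|^{1/m})\leq L(\omega(|\xi|)+1)\leq k^{-1}\sigma(|\xi|)+L_k$ for every $k\in\N$, this decay dominates $e^{-k\omega(|P(\xi)|^{1/m})}$, and taking $f_N=P(D)^Nv_2\in\E'(\Omega)$ verifies both the global polynomial-type and the conic exponential-type estimates in Definition \ref{defWF1}, giving $(x_0,\xi_0)\notin\WF_*^P(v_2)$.

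Combining the two pieces by sub-additivity of $\WF_*^P$ (summing the admissible sequences $f_N^{(1)},f_N^{(2)}$ for $v_1,v_2$ yields an admissible sequence for $v_1+v_2=\psi u$, with constants adjusted in an obvious way in \eqref{B4}--\eqref{B5} and \eqref{R4}--\eqref{R5}) gives $(x_0,\xi_0)\notin\WF_*^P(\psi u)$, which is the desired inclusion. The main technical obstacle is the decay estimate for $\widehat{v_2}$ on $\Lambda$; fortunately, this is essentially the same computation as the one already performed for $u_2=\varphi(I-\phi(D))u$ inside the proof of Theorem \ref{thm1}, so the core of the work amounts to a near-verbatim transcription of that argument with our $v_2=\psi(I-\phi(D))u$ in place of their $u_2$.
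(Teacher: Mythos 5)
Your proof follows essentially the same route as the paper's: reduce to $u\in\E'(\Omega)$, invoke Theorem~\ref{thm1} to produce $\phi$, decompose $\psi u$ into $\psi\phi(D)u+\psi(I-\phi(D))u$, handle the first piece via Proposition~17 of \cite{BJJ} and the second by the convolution estimate already used for $u_2$ in the proof of Theorem~\ref{thm1}. The only difference is that the paper inserts an additional cutoff $\varphi\in\D_{\{\sigma\}}(V')$ with $\varphi\equiv1$ near $x_0$, so that $\varphi\psi$ is compactly supported inside the neighborhood $V'$ where $\phi(D)u$ is known to be regular; you replace this with a localization argument via Theorem~\ref{thB}, which works, although the sequence giving the estimates for $v_1$ is then the one Theorem~\ref{thB} provides (a truncation of $v_1$ near $x_0$) rather than the untruncated $f_N=P(D)^Nv_1$ as you wrote.
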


\begin{proof}

\underline{Beurling case}.
Let $(x_0,\xi_0)\notin\WF_{(\omega)}^P(u)$. First, we observe that we can assume
$u\in\E'(\Omega)$, since the definition of the wave front set is local. Then, there exist
a neighborhood
$V$ of $x_0$ and a conic neighborhood $\Gamma$ of $\xi_0$ such that
$$(V\times\Gamma)\cap\WF_{(\omega)}^P(u)=\emptyset.$$

For $\Gamma'\subset\subset\Gamma$ there exists, by Theorem \ref{thm1},
a bounded $\phi\in\E_{(\omega)}(\R^n)$ with $\supp\phi\subset\Gamma$,
$\phi\equiv1$ in $\Gamma'$ such that $\left.\phi(D)u\right|_{V'}\in
\E_{(\omega)}^P(V')$ for some neighborhood $V'\subseteq V$ of $x_0$.

Let us then consider $\varphi\in\D_{\{\sigma\}}(V')$ with $\varphi\equiv1$ in
a neighborhood of $x_0$, and set
\beqsn
\varphi\psi u=\varphi\psi\phi(D)u+\varphi\psi(I-\phi(D))u=:u_1+u_2.
\eeqsn
Since $\varphi,\psi\in\D_{\{\sigma\}}(\Omega)$ we have, by
\cite[Prop. 4.4]{BMT}, that $\varphi\psi\in\D_{\{\sigma\}}(\Omega)$ and hence
$u_1\in\E_{(\omega)}^P(V')$ by \cite[Prop. 17]{BJJ} because
$\left.\phi(D)u\right|_{V'}\in\E_{(\omega)}^P(V')$.

Arguing as in the proof of Theorem \ref{thm1} with $\varphi\psi\in
\D_{\{\sigma\}}$ instead of $\varphi\in\D_{\{\sigma\}}$, we obtain that
$(x_0,\xi_0)\notin\WF_{(\omega),P}(u_2)$ and hence
$(x_0,\xi_0)\notin\WF_{(\omega)}^P(u_2)$ by Lemma \ref{prop39}.

Therefore $(x_0,\xi_0)\notin\WF_{(\omega)}^P(\psi u)$.

\underline{Roumieu case}.
It is similar to the Beurling case.
\end{proof}

\begin{Th}
\label{propWF}
Let $P(D)$ be a hypoelliptic linear partial differential operator of order $m$
with constant coefficients;
let $\omega$ be a non-quasianalytic weight function such that
$\omega(t^\gamma)=o(\sigma(t))$, as $t\to+\infty$, where $\gamma$ is the
constant defined in \eqref{gamma} and $\sigma(t)=t^{1/s}$ for some
$s>1$; let $\Omega$ be an open subset of $\R^n$ and $u\in\D'(\Omega)$.
Then
\beqsn
\WF_*^P(u)=\WF_{*,P}(u),
\eeqsn
for $*=(\omega)$ or $\{\omega\}$,
\end{Th}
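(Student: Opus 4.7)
The plan is to prove the reverse inclusion $\WF_{*,P}(u) \subseteq \WF_*^P(u)$, since Lemma \ref{prop39} already gives $\WF_*^P(u) \subseteq \WF_{*,P}(u)$. The argument essentially reorganizes the second half of the proof of Theorem \ref{thm1}, combined with Paley-Wiener.

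Suppose $(x_0,\xi_0) \notin \WF_*^P(u)$. Since $\WF_*^P$ is local, I may assume $u \in \E'(\Omega)$. By Theorem \ref{thm1} there exist conic neighborhoods $\Gamma' \subset\subset \Gamma$ of $\xi_0$, a neighborhood $V$ of $x_0$, and a symbol $\phi$ as in Lemma \ref{lemma4FGJ} (bounded, $\supp\phi \subset \Gamma$, $\phi \equiv 1$ on $\Gamma'$) such that $\phi(D)u|_V \in \E_*^P(V)$. I then pick $\psi \in \D_{\{\sigma\}}(V)$ with $\psi \equiv 1$ in a neighborhood of $x_0$, and decompose
$$
\psi u = \psi\,\phi(D)u + \psi\,(I - \phi(D))u =: u_1 + u_2.
$$
It remains to verify that $\widehat{\psi u}$ satisfies \eqref{B3} (resp. \eqref{R3}) on some open conic neighborhood $\Lambda$ of $\xi_0$, which will yield $(x_0,\xi_0) \notin \WF_{*,P}(u)$.

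For $u_1$: using $\psi \in \D_{\{\sigma\}}$, $\phi(D)u|_V \in \E_*^P(V)$ and the hypothesis $\omega(t^\gamma) = o(\sigma(t))$, \cite[Prop.~17]{BJJ} gives $u_1 \in \D_*^P(\R^n)$. Applying the Paley-Wiener estimate \eqref{PW3} of Lemma \ref{lemmaPW} with $\zeta = \xi \in \R^n$ (so $H_K(\Im\zeta) = 0$), I get $|\widehat{u_1}(\xi)| \leq C_\lambda e^{-\frac{\lambda}{2}\omega(|P(\xi)|^{1/m})}$ for every $\lambda > 0$ in the Beurling case, and for some $\lambda > 0$ in the Roumieu case, valid on all of $\R^n$.

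For $u_2$: I reuse the argument from the reverse direction of Theorem \ref{thm1}. Writing
$$
\widehat{u_2}(\xi) = (2\pi)^{-n}\int \widehat{\psi}(\xi - \eta)\,(1-\phi(\eta))\,\widehat{u}(\eta)\, d\eta,
$$
and splitting the integration into the sets $A = \{|\xi-\eta| \leq \delta(|\xi|+|\eta|)\} \subset \Gamma'$ (on which $1-\phi \equiv 0$) and $B$, the exponential $\sigma$-decay of $\widehat{\psi}(\xi-\eta)$ on $B$ together with $\omega(t^\gamma) = o(\sigma(t))$ yields, for $\xi$ in a conic neighborhood $\Lambda$ of $\xi_0$ with $\overline{\Lambda} \subset \Gamma'$, the estimate $|\widehat{u_2}(\xi)| \leq D_k e^{-k\omega(|P(\xi)|^{1/m})}$ for every $k \in \N$ (Beurling) or for some $k \in \N$ (Roumieu), exactly as in \eqref{163}.

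Adding the two bounds gives \eqref{B3} (resp. \eqref{R3}) for $\widehat{\psi u}$ on $\Lambda$, so $(x_0,\xi_0) \notin \WF_{*,P}(u)$. The main technical work has already been carried out in the proof of Theorem \ref{thm1}; the only new observation is that the Paley-Wiener bound for $\widehat{u_1}$ holds globally on $\R^n$, so it combines with the cone-localized bound for $\widehat{u_2}$ without loss. The Roumieu case is entirely analogous, applying \eqref{163} for a single $k$.
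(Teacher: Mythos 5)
Your proof is correct, and it takes a genuinely different route from the paper's. The paper's argument for the reverse inclusion $\WF_{*,P}(u)\subseteq\WF_*^P(u)$ proceeds by picking compact and conic neighborhoods $K\times F$ disjoint from $\WF_*^P(u)$, invoking Proposition~\ref{lemma2'} to localize to $\psi u$ with $\psi\in\D_{\{\sigma\}}$, then using H\"ormander's cutoff sequence $\chi_N$ from \cite{Huniq} together with the microlocal estimate of \cite[Cor.\ 10]{BJJ} applied to $g_N:=\chi_{Nmp}P(D)^N\psi u$, and finally passing back through Lemma~\ref{lemmaBR}. Your approach instead goes directly through the pseudodifferential characterization of Theorem~\ref{thm1}: from $\phi(D)u|_V\in\E_*^P(V)$ you split $\psi u=u_1+u_2$ and then estimate $\widehat{\psi u}$ directly, using \cite[Prop.\ 17]{BJJ} plus Lemma~\ref{lemmaPW} (Paley--Wiener) to bound $\widehat{u_1}$ globally, and the cone-splitting convolution bound from the proof of Theorem~\ref{thm1} to bound $\widehat{u_2}$ on a cone. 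What your route buys is directness: you obtain the estimate \eqref{B3}/\eqref{R3} for $\widehat{\psi u}$ in one stroke, bypassing the $\chi_N$-sequence, Corollary~10 of \cite{BJJ}, and Lemma~\ref{lemmaBR} entirely; the only new ingredient relative to the proof of Theorem~\ref{thm1} is that you apply \eqref{PW3} to the compactly supported $u_1\in\D_*^P(\R^n)$ to turn the iterate membership into a global Fourier decay, which is precisely what one needs to add to the cone-localized bound for $\widehat{u_2}$. The paper's route, in return, establishes Proposition~\ref{lemma2'} as a standalone statement that is reused later (in Proposition~\ref{prop2FGJ}), so the two presentations are organized around different priorities rather than one being strictly subsumed by the other. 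One small point worth making explicit in your write-up: after localizing to $u\in\E'(\Omega)$, shrink $V$ so that $V\subset\Omega$ and $\supp\psi\subset V$, so that $\psi\in\D_{\{\sigma\}}(\Omega)$ and the extension of $u_1=\psi\,\phi(D)u$ by zero belongs to $\D_*^P(\R^n)$ — that is what lets you invoke Lemma~\ref{lemmaPW} on all of $\R^n$.
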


\begin{proof}
The inclusion
\beqsn
\WF_*^P(u)\subseteq\WF_{*,P}(u)
\eeqsn
has been proved in Lemma \ref{prop39}.

Let us prove the other inclusion.

\underline{Beurling case}.
Let $(x_0,\xi_0)\notin\WF_{(\omega)}^P(u)$. There exist a compact neighborhood
$K$ of $x_0$ and a closed conic neighborhood $F$ of $\xi_0$ such that
\beqsn
\WF_{(\omega)}^P(u)\cap(K\times F)=\emptyset.
\eeqsn
Take, according to \cite[Lemma 2.2]{Huniq}, $\chi_N\subset
\D(K)$ with $\chi_N\equiv1$ in a neighborhood $K'\subset K$ of $x_0$, that
satisfies
\beqs
\label{B9}
\sup_K|D^{\alpha+\beta}\chi_N|\leq C_\alpha(C_\alpha N)^{|\beta|}
\qquad\forall\alpha,\beta\in\N_0,\,|\beta|\leq N.
\eeqs
Fix  $\psi\in\D_{\{\sigma\}}(K')$. By Proposition \ref{lemma2'}
\beqsn
\WF_{(\omega)}^P(\psi u)\subseteq\WF_{(\omega)}^P(u),
\eeqsn
and, hence,
\beqsn
\WF_{(\omega)}^P(\psi u)\cap(K\times F)=\emptyset.
\eeqsn
Now, consider $g_N:=\chi_{Nmp}P(D)^N\psi u$, for $p$ sufficiently large so that,
by \cite[Corollary 10]{BJJ}, for every $k,\ell\in\N$ there exists $C_{k,\ell}>0$
such that
\beqs
\label{B8}
|\widehat{g}_N(\xi)|\leq C_{k,\ell}e^{k\varphi^*\left(\frac{Nm}{k}\right)}
(1+|\xi|)^{-\ell}
\qquad\forall N\in\N,\,\xi\in F.
\eeqs
Moreover,
\beqsn
g_N=\chi_{Nmp}P(D)^N\psi u=P(D)^N\psi u\qquad\mbox{in}\  \R^n,
\eeqsn
since $\chi_{Nmp}\equiv1$ on $\supp\psi$.

Therefore
\beqsn
|P(\xi)^N\widehat{\psi u}(\xi)|=|\F(P(D)^N\psi u)(\xi)|
=|\widehat{g}_N(\xi)|
\leq C_{k,\ell}e^{k\varphi^*\left(\frac{Nm}{k}\right)}(1+|\xi|)^{-\ell}
\qquad\forall N\in\N,\ \xi\in F.
\eeqsn
By Lemma \ref{lemmaBR}, this implies that
\beqsn
|\widehat{\psi u}(\xi)|\leq C_ke^{-k\omega(|P(\xi)|^{1/m})}
\qquad\forall\xi\in F,
\eeqsn
i.e. $(x_0,\xi_0)\notin\WF_{(\omega),P}(u)$.

\underline{Roumieu case}. It is similar to the Beurling case.
\end{proof}

By Theorem \ref{propWF} and Lemma \ref{lemmaBR}
we can now prove the following characterizations of the wave front
set with respect to the iterates:
\begin{Cor}
\label{corWFB}
Let $P(D)$ be a hypoelliptic linear partial differential operator
with constant coefficients;
let $\omega$ be a non-quasianalytic weight function such that
$\omega(t^\gamma)=o(\sigma(t))$, as $t\to+\infty$, where $\gamma$ is the
constant defined in \eqref{gamma} and $\sigma(t)=t^{1/s}$ for some
$s>1$; let $\Omega$ be an open subset of $\R^n$, $u\in\D'(\Omega)$
and $(x_0,\xi_0)\in\Omega\times(\R^n\setminus\{0\})$.

The following conditions are equivalent:
\begin{itemize}
\item[(1)]
$(x_0,\xi_0)\notin\WF_{(\omega)}^P(u)$.
\item[(2)]
There exist a neighborhood $U$ of $x_0$, an open conic neighborhood
$\Gamma$ of $\xi_0$ and $\psi\in\D_{\{\sigma\}}(\Omega)$ with
$\psi\equiv1$ in $U$ such that:
\beqs
\nonumber
&&\forall k\in\N,\ \exists C_k>0:\\
\label{BB3}
&&|\widehat{\psi u}(\xi)|\leq C_ke^{-k\omega(|P(\xi)|^{1/m})}
\qquad\forall\xi\in\Gamma.
\eeqs
\item[(3)]
There exist a neighborhood $U$ of $x_0$, an open conic neighborhood
$\Gamma$ of $\xi_0$ and $\psi\in\D_{\{\sigma\}}(\Omega)$ with
$\psi\equiv1$ in $U$ such that:
\beqs
\nonumber
&&\forall k,\ell\in\N,\ \exists C_{k,\ell}>0:\\
\label{BBB3}
&&|P(\xi)|^N|\widehat{\psi u}(\xi)|\leq C_{k,\ell}
e^{k\varphi^*\left(\frac{Nm}{k}\right)}(1+|\xi|)^{-\ell}
\qquad\forall N\in\N,\,\xi\in\Gamma.
\eeqs
\item[(4)]
There exist a neighborhood $U$ of $x_0$, an open conic neighborhood
$\Gamma$ of $\xi_0$ and a bounded sequence $\{u_N\}_{N\in\N}
\subset\E'(\Omega)$ such that $u_N=u$ in $U$ and:
\beqs
\nonumber
&&\forall k\in\N,\ell\in\N_0,\ \exists C_{k,\ell}>0:\\
\label{C1}
&&|P(\xi)|^N|\widehat{u}_N(\xi)|\leq C_{k,\ell}
e^{k\varphi^*\left(\frac{Nm}{k}\right)}(1+|\xi|)^{-\ell}
\qquad\forall N\in\N,\,\xi\in\Gamma.
\eeqs
\end{itemize}
\end{Cor}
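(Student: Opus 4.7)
The plan is to run the equivalences cyclically, using Theorem \ref{propWF} to pass from the definition of $\WF_{(\omega)}^P(u)$ to the Fourier-decay formulation $\WF_{(\omega),P}(u)$, Lemma \ref{lemmaBR} to convert between the decay in $\omega(|P(\xi)|^{1/m})$ and the polynomial-weighted estimates in $|P(\xi)|^N$, and a short direct argument to add/remove the bounded sequence $\{u_N\}$.

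First I would observe that $(1) \Leftrightarrow (2)$ is immediate from Theorem \ref{propWF}: condition (2) is literally the condition $(x_0,\xi_0)\notin\WF_{(\omega),P}(u)$ (Beurling case) appearing in the definition preceding Lemma \ref{prop39}, and Theorem \ref{propWF} asserts $\WF_{(\omega)}^P(u)=\WF_{(\omega),P}(u)$. Next, $(2)\Leftrightarrow(3)$ is the content of Lemma \ref{lemmaBR}(1) applied to the cone $\Gamma$ and to the compactly supported distribution $\psi u \in \E'(\R^n)$, since (2) is exactly (1.a) and (3) is exactly (1.b) of that lemma.

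For $(3)\Rightarrow(4)$ I would simply take the constant sequence $u_N:=\psi u$ for every $N\in\N$. This is a bounded sequence in $\E'(\Omega)$, and because $\psi\equiv1$ on $U$ we have $u_N=u$ in $U$. The estimate \eqref{C1} in (4) is then identical to \eqref{BBB3} in (3).

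The converse $(4)\Rightarrow(1)$ is the only step with a small amount of bookkeeping. Given a bounded sequence $\{u_N\}\subset\E'(\Omega)$ as in (4), set $f_N:=P(D)^N u_N\in\E'(\Omega)$; since $u_N=u$ on $U$ the partial differential operator gives $f_N=P(D)^N u$ on $U$, verifying condition $(i)$ of Definition \ref{defWF1}. Because $\widehat{f}_N(\xi)=P(\xi)^N\widehat{u}_N(\xi)$, the cone estimate \eqref{C1} translates immediately into \eqref{B5}. It remains to check the global bound \eqref{B4}: boundedness of $\{u_N\}$ in $\E'(\Omega)$ yields, by the classical Paley–Wiener theorem, constants $C_0,M>0$ independent of $N$ with $|\widehat{u}_N(\xi)|\leq C_0(1+|\xi|)^M$; together with $|P(\xi)|\leq C_1(1+|\xi|)^m$ one gets $|\widehat{f}_N(\xi)|\leq C_0 C_1^N (1+|\xi|)^{Nm}(1+|\xi|)^M$, which absorbs into the form of \eqref{B4} because $\varphi^*\geq0$ forces $(1+|\xi|)^{Nm}\leq 2^{Nm}(e^{(k/Nm)\varphi^*(Nm/k)}+|\xi|)^{Nm}$. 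Thus $(x_0,\xi_0)\notin\WF_{(\omega)}^P(u)$.

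The only step with any real content is $(1)\Leftrightarrow(2)$, which is already delegated to Theorem \ref{propWF}; all other steps are either a direct invocation of Lemma \ref{lemmaBR} or routine manipulation of Fourier-transform estimates on compactly supported distributions. Consequently I do not anticipate any genuine obstacle beyond careful transcription of the estimates in \eqref{B4}.
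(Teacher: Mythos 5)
Your proposal follows exactly the same chain of implications and tools as the paper's proof: $(1)\Leftrightarrow(2)$ via Theorem \ref{propWF}, $(2)\Leftrightarrow(3)$ via Lemma \ref{lemmaBR}, $(3)\Rightarrow(4)$ by the constant sequence $u_N=\psi u$, and $(4)\Rightarrow(1)$ by setting $f_N=P(D)^N u_N$ and verifying \eqref{B5} and \eqref{B4} from \eqref{C1} and the uniform polynomial bound on $\widehat{u}_N$. The only cosmetic difference is the harmless extra factor $2^{Nm}$ in your verification of \eqref{B4} (since $\varphi^*\geq 0$ already gives $(1+|\xi|)^{Nm}\leq (e^{(k/Nm)\varphi^*(Nm/k)}+|\xi|)^{Nm}$ directly); otherwise it matches the paper.
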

\begin{Cor}
\label{corWFR}
Let $P(D)$ be a hypoelliptic linear partial differential operator
with constant coefficients;
let $\omega$ be a non-quasianalytic weight function such that
$\omega(t^\gamma)=o(\sigma(t))$, as $t\to+\infty$, where $\gamma$ is the
constant defined in \eqref{gamma} and $\sigma(t)=t^{1/s}$ for some
$s>1$; let $\Omega$ be an open subset of $\R^n$, $u\in\D'(\Omega)$
and $(x_0,\xi_0)\in\Omega\times(\R^n\setminus\{0\})$.

The following conditions are equivalent:
\begin{itemize}
\item[(1)]
$(x_0,\xi_0)\notin\WF_{\{\omega\}}^P(u)$.
\item[(2)]
There exist a neighborhood $U$ of $x_0$, an open conic neighborhood
$\Gamma$ of $\xi_0$ and $\psi\in\D_{\{\sigma\}}(\Omega)$ with
$\psi\equiv1$ in $U$ such that:
\beqs
\nonumber
&&\exists k\in\N, C>0:\\
\label{RR3}
&&|\widehat{\psi u}(\xi)|\leq Ce^{-\frac1k\omega(|P(\xi)|^{1/m})}
\qquad\forall\xi\in\Gamma.
\eeqs
\item[(3)]
There exist a neighborhood $U$ of $x_0$, an open conic neighborhood
$\Gamma$ of $\xi_0$ and $\psi\in\D_{\{\sigma\}}(\Omega)$ with
$\psi\equiv1$ in $U$ such that:
\beqs
\nonumber
&&\exists k\in\N,\forall\ell\in\N,\ \exists C_\ell>0:\\
\label{RRR3}
&&|P(\xi)|^N|\widehat{\psi u}(\xi)|\leq C_\ell
e^{\frac1k\varphi^*(Nmk)}(1+|\xi|)^{-\ell}
\qquad\forall N\in\N,\,\xi\in\Gamma.
\eeqs
\item[(4)]
There exist a neighborhood $U$ of $x_0$, an open conic neighborhood
$\Gamma$ of $\xi_0$ and a bounded sequence $\{u_N\}_{N\in\N}
\subset\E'(\Omega)$ such that $u_N=u$ in $U$ and:
\beqs
\nonumber
&&\exists k\in\N,\forall\ell\in\N_0,\ \exists C_\ell>0:\\
\label{C2}
&&|P(\xi)|^N|\widehat{u}_N(\xi)|\leq C_\ell
e^{\frac1k\varphi^*(Nmk)}(1+|\xi|)^{-\ell}
\qquad\forall N\in\N,\,\xi\in\Gamma.
\eeqs
\end{itemize}
\end{Cor}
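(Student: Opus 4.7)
The plan is to arrange the four conditions in the cycle $(1)\Leftrightarrow(2)\Leftrightarrow(3)\Rightarrow(4)\Rightarrow(1)$, recycling the machinery already assembled and avoiding any fresh microlocal work. The equivalence $(1)\Leftrightarrow(2)$ is a direct repackaging of Theorem~\ref{propWF}: that theorem identifies $\WF_{\{\omega\}}^P(u)$ with $\WF_{\{\omega\},P}(u)$, and condition~$(2)$ is literally the defining condition of $\WF_{\{\omega\},P}(u)$ in the Roumieu case. The equivalence $(2)\Leftrightarrow(3)$ is then obtained by applying Lemma~\ref{lemmaBR}, Roumieu case, to $\psi u\in\D(\Omega)$ on the cone $\Gamma$, keeping the same neighborhood $U$ and the same cut-off $\psi\in\D_{\{\sigma\}}(\Omega)$ in both statements; the hypoellipticity of $P$ (in particular \eqref{H1}) is used in Lemma~\ref{lemmaBR} exactly as it was there.

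For $(3)\Rightarrow(4)$, given $\psi$ satisfying \eqref{RRR3} on a cone $\Gamma$, I simply take the constant sequence $u_N:=\psi u\in\E'(\Omega)$, which is trivially bounded in $\E'(\Omega)$ and coincides with $u$ on the neighborhood $U$ where $\psi\equiv1$; the inequality in \eqref{C2} then reduces to \eqref{RRR3}. For $(4)\Rightarrow(1)$, I set $f_N:=P(D)^Nu_N\in\E'(\Omega)$, so that $f_N=P(D)^Nu$ in $U$ and $\widehat{f}_N(\xi)=P(\xi)^N\widehat{u}_N(\xi)$. The estimate $(iii)(b)$ of Definition~\ref{defWF1}, namely \eqref{R5}, is then exactly \eqref{C2}. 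For $(iii)(a)$, i.e.\ \eqref{R4}, I use that any bounded subset of $\E'(\Omega)$ is uniformly estimated by a single polynomial, so there exist $C,M>0$ with $|\widehat{u}_N(\xi)|\leq C(1+|\xi|)^M$ uniformly in $N$; combining this with $|P(\xi)|\leq c(1+|\xi|)^m$ and the trivial inequality $e^{\frac{1}{Nmk}\varphi^*(Nmk)}\geq 1$, which gives $(1+|\xi|)^{Nm}\leq 2^{Nm}(e^{\frac{1}{Nmk}\varphi^*(Nmk)}+|\xi|)^{Nm}$, produces \eqref{R4} with an adjusted constant $C$.

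I do not expect a serious obstacle. The one mild subtlety worth flagging is that $(iii)(a)$ must hold on all of $\R^n$ whereas \eqref{C2} controls only the cone $\Gamma$; however $(iii)(a)$ only demands polynomial-type growth globally, so the uniform polynomial bound on $\widehat{u}_N$ suffices and no information on $\Gamma$ is used there. The only ingredient that is genuinely not automatic is the identification of the wave front set in Theorem~\ref{propWF} (which in turn relies on Theorem~\ref{thm1} and Lemma~\ref{prop39}), but since that result is in hand the corollary is essentially a translation exercise. The Beurling analogue, Corollary~\ref{corWFB}, is handled by the same four-step chain with the Roumieu quantifier patterns $\exists k,\forall\ldots$ replaced by the Beurling patterns $\forall k,\exists\ldots$ throughout.
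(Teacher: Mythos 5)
Your proposal is correct and matches the paper's approach exactly: the paper proves Corollary~\ref{corWFB} via the same cycle $(1)\Leftrightarrow(2)\Leftrightarrow(3)\Rightarrow(4)\Rightarrow(1)$, using Theorem~\ref{propWF}, Lemma~\ref{lemmaBR}, the choice $u_N=\psi u$, and then $f_N=P(D)^Nu_N$ with the bounded-sequence polynomial estimate, and declares the Roumieu case ``similar.'' Your writeup is that Roumieu adaptation carried out verbatim, including the correct observation that the global estimate \eqref{R4} comes purely from the uniform polynomial bound on $\widehat{u}_N$ and needs no information from the cone $\Gamma$.
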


\begin{proof}[Proof of Corollary \ref{corWFB}]

$(1)\Leftrightarrow(2)$ follows from Theorem \ref{propWF}.

$(2)\Leftrightarrow(3)$ follows from Lemma \ref{lemmaBR}.

$(3)\Rightarrow(4)$: Taking $u_N=\psi u$ we have that
$\{u_N\}_{N\in\N}$ is a bounded sequence in $\E'(\Omega)$, $u_N=u$ in $U$
 and \eqref{BBB3} implies \eqref{C1} by the choice of $u_N$.

$(4)\Rightarrow(1)$: Taking $f_N=P(D)^Nu_N$ we have that $f_N=P(D)^Nu$
in $U$ and, by \eqref{C1},
\beqsn
|\widehat{f}_N(\xi)|=|P(\xi)|^N|\widehat{u}_N(\xi)|
\leq C_{k,\ell}e^{k\varphi^*\left(\frac{Nm}{k}\right)}(1+|\xi|)^{-\ell}
\qquad\forall\xi\in\Gamma,
\eeqsn
giving condition \eqref{B5}.

Finally, since $\{u_N\}_{N\in\N}$ is a bounded sequence in $\E'(\Omega)$,
there exist $c,M>0$ such that
\beqsn
|\widehat{u}_N(\xi)|\leq c(1+|\xi|)^M\qquad
\forall\xi\in\R^n
\eeqsn
and hence
\beqsn
|\widehat{f}_N(\xi)|\leq&&|P(\xi)|^N|\widehat{u}_N(\xi)|
\leq C^N(1+|\xi|)^{mN}c(1+|\xi|)^M\\
\leq&& \tilde{C}^N(e^{\frac{k}{Nm}\varphi^*\left(\frac{Nm}{k}\right)}+|\xi|)^{Nm}
(1+|\xi|)^M
\qquad\forall\xi\in\R^n,
\eeqsn
for some $C,\tilde{C}>0$,
proving also \eqref{B4}.

Therefore $(x_0,\xi_0)\notin\WF_{(\omega)}^P(u)$.
\end{proof}

\begin{proof}[Proof of Corollary \ref{corWFR}]
Its similar to the Beurling case, Corollary \ref{corWFB}.
\end{proof}

The new characterization of $\WF_*^P(u)$ given by
Corollaries \ref{corWFB} and \ref{corWFR} allows to complete
Theorem 18 of \cite{BJJ}, obtaining the existence of a distribution with
prescribed $\omega$-wave front set with respect to the iterates:
\begin{Th}
\label{newth43}
Let $P(D)$ be a linear partial differential operator of order $m$ with
constant coefficients which is hypoelliptic, but not elliptic.
Let $\omega$ be a non-quasi-analytic weight function such that
$\omega(t^b)=o(\bar{\sigma}(t))$, as $t\to+\infty$, where
$\bar{\sigma}(t)=t^{1/s}$ for some $s>1$ and $b=\max\{\gamma,3/2\}$, with
$\gamma$ defined in \eqref{gamma}.

Given an open subset $\Omega$ of $\R^n$ and a closed conic subset
$S$ of $\Omega\times(\R^n\setminus\{0\})$, there exists $u\in\D'(\Omega)$ with
\beqsn
\WF_*^P(u)=S,
\eeqsn
for $*=(\omega)$ or $\{\omega\}$.
\end{Th}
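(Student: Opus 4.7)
The plan is to reduce the theorem to the partial result of \cite[Theorem 18]{BJJ}, which I expect produces, for each single ray $(x_0,\xi_0)$, a distribution whose $\WF_*^P$ is essentially supported on that ray, and then to assemble a convergent sum of such building blocks, using the new characterizations in Corollaries \ref{corWFB} and \ref{corWFR} to verify that no extra wave front set is created. The reason the new characterization is needed is that, unlike the defining condition in Definition \ref{defWF1}, the equivalent condition \eqref{BB3} (resp.\ \eqref{RR3}) on $\widehat{\psi u}(\xi)$ behaves linearly in $u$ and is therefore stable under infinite superposition.

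First, I would exhaust $S$ by a countable collection of elementary closed conic pieces. Since $S$ is closed and conic with $x$-fiber $S_x = \{\xi:(x,\xi)\in S\}$, one can choose a dense countable family of points $\{(x_j,\xi_j)\}\subset S$ together with closed conic neighborhoods $\overline{\Gamma_j}$ of $\xi_j/|\xi_j|$ shrinking around the ray through $\xi_j$ such that $\{x_j\}\times\overline{\Gamma_j}\subseteq S$ and $\bigcup_j\{x_j\}\times\overline{\Gamma_j}$ is dense in $S$. For each $j$, the hypothesis that $P$ is hypoelliptic but not elliptic is exactly what allows \cite[Theorem 18]{BJJ} to supply a distribution $u_j\in\E'(\Omega)$, supported in a small neighborhood of $x_j$, having $(x_j,\xi_j)\in\WF_*^P(u_j)$ and $\WF_*^P(u_j)\subseteq\{x_j\}\times\overline{\Gamma_j}$.

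I would then set $u:=\sum_{j\in\N}c_j u_j$ with positive coefficients $c_j$ chosen to decrease sufficiently fast; concretely, $c_j$ should be small enough that $\sum c_j u_j$ converges in $\D'(\Omega)$ and, moreover, that each tail $\sum_{j\geq J}c_j u_j$ satisfies the Fourier decay bound of Corollary \ref{corWFB}(2) (resp.\ Corollary \ref{corWFR}(2)) on every closed cone disjoint from $S$. The inclusion $\WF_*^P(u)\subseteq S$ then follows: for $(x_0,\xi_0)\notin S$, the conic closedness of $S$ provides a cut-off $\psi\in\D_{\{\bar\sigma\}}(\Omega)$ and a cone $\Gamma$ avoiding $S$ on which only finitely many $u_j$ contribute singular behavior (the rest being smooth near $x_0$), and for each of these finitely many terms condition \eqref{BB3} (resp.\ \eqref{RR3}) holds by construction of $u_j$, so the characterization gives $(x_0,\xi_0)\notin\WF_*^P(u)$. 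For the opposite inclusion $S\subseteq\WF_*^P(u)$, density of $\{(x_j,\xi_j)\}$ in $S$ and closedness of $\WF_*^P(u)$ reduce matters to showing $(x_j,\xi_j)\in\WF_*^P(u)$ for every $j$, which in turn follows from $(x_j,\xi_j)\in\WF_*^P(u_j)$ provided one verifies that the remainder $\sum_{i\neq j}c_i u_i$ is $\omega$-regular with respect to $P$ at $(x_j,\xi_j)$; this is where Proposition \ref{lemma2'} and the support properties of the $u_i$ are used.

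The main obstacle is the quantitative choice of the $c_j$ needed to preserve the Fourier-side estimates of Corollaries \ref{corWFB} and \ref{corWFR} under the infinite sum. The extra constant $3/2$ in $b=\max\{\gamma,3/2\}$ is precisely the technical room required for this: a factor $\gamma$ accommodates the hypoelliptic gap between $|\xi|^m$ and $|P(\xi)|$ through \eqref{gamma} and the bounds in Lemma \ref{lemmaBR}, while the additional factor $3/2$ provides the slack necessary so that multiplying each $u_j$ by the Roumieu cut-off $\psi\in\D_{\{\bar\sigma\}}$ (via \cite[Proposition 17]{BJJ}) and then summing with polynomially--controlled coefficients $c_j$ still produces Fourier tails dominated by $e^{-k\omega(|P(\xi)|^{1/m})}$ for every $k$ in the Beurling case, or for some $k$ in the Roumieu case. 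Once this bookkeeping is in place, both inclusions are verified and $\WF_*^P(u)=S$.
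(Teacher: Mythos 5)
Your proposal rests on a misreading of what \cite[Theorem 18]{BJJ} actually supplies. That theorem does not produce, for each ray $(x_0,\xi_0)$, a building block whose $\WF_*^P$ is concentrated on that ray; it takes the \emph{given} closed conic set $S$ and produces one distribution $u$ with only the partial information $\emptyset\neq\WF_*^P(u)\subseteq S$. There is no statement there that a prescribed point of $S$ lies in $\WF_*^P(u)$, which is exactly the gap the present theorem closes. Consequently the objects $u_j$ you want to assemble are not available, and the superposition $u=\sum c_j u_j$ cannot be set in motion.

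Even granting such blocks, the plan has a second serious obstacle that you acknowledge but do not resolve: to conclude $(x_j,\xi_j)\in\WF_*^P(u)$ from $(x_j,\xi_j)\in\WF_*^P(u_j)$ you must rule out cancellation between $u_j$ and the tail $\sum_{i\neq j}c_i u_i$. The one-sided Fourier bound \eqref{BB3}/\eqref{RR3} is indeed stable under sums, but that only helps with the inclusion $\WF_*^P(u)\subseteq S$; the reverse inclusion needs a \emph{lower} bound on $\widehat{\psi u}$, and those do not pass to sums. The paper's proof resolves precisely this point by working with one explicit series $u(x)=\sum_k e^{-\sigma(k^{d/m})}\phi(k(x-x_k))e^{ik^3\langle x,\theta_k\rangle}$ (Beurling case; the Roumieu coefficient is modified), every term of which is individually smooth, so that the singularity emerges only from the infinite sum. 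The lower-bound step is then a direct computation: assuming $(x_0,\xi_0)\notin\WF_*^P(u)$ and invoking Corollary \ref{corWFB}/\ref{corWFR} to obtain \eqref{BBB3}/\eqref{RRR3}, one evaluates $\F(P(D)^N(\psi u))$ along the modulation frequencies $k^3\theta_k$, isolates the $j=k$ term, dominates the $j\neq k$ remainder using the separation $|k^3\theta_k-j^3\theta_j|\geq kj$, and reaches a contradiction.

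Finally, your explanation of $b=\max\{\gamma,3/2\}$ is off. The $3/2$ is not slack for choosing summable coefficients $c_j$: the auxiliary weight in the construction is $\sigma(t)=\omega(t^{3/2})$, and the exponent $3/2$ is dictated by the mismatch between the concentration scale $1/k$, the modulation frequency $k^3$, and the exponent $d/m<1$ coming from the non-ellipticity of $P$. The condition $\omega(t^b)=o(\bar\sigma(t))$ with $b=\max\{\gamma,3/2\}$ simultaneously ensures $\omega(t^\gamma)=o(\bar\sigma(t))$ (needed for Corollaries \ref{corWFB} and \ref{corWFR}) and $\sigma=o(\bar\sigma)$ (needed so that the cut-off $\psi\in\D_{\{\bar\sigma\}}$ enters $\D_{(\sigma)}$ and Proposition 17 of \cite{BJJ} applies). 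In short, the route you sketch is genuinely different from the paper's and, as written, does not close.
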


\begin{proof}
We assume, without loss of generality, that $\Omega=\R^n$; we
construct the same distribution $u\in\D'(\R^n)$ constructed
in \cite[Thm. 18]{BJJ} and follow the ideas therein and in
\cite[Thm. 8.1.4]{H1}.

We choose a sequence $(x_k,\theta_k)\in S$ with $|\theta_k|=1$ so that
every $(x,\theta)\in S$ with $|\theta|=1$ is the limit of a subsequence.

We set $\sigma(t):=\omega(t^{3/2})$ and separate the Beurling and
the Roumieu cases.

\underline{Beurling case}:
Take $\phi\in\D_{(\omega)}(\R^n)$ with $\widehat{\phi}(0)=1$ and define, as in
(158) of \cite{BJJ},
\beqs
\label{442}
u(x)=\sum_{k=1}^{+\infty}e^{-\sigma(k^{d/m})}\phi(k(x-x_k))
e^{ik^3\langle x,\theta_k\rangle},
\eeqs
where $d$ is the constant of \eqref{H1} with $0<d<m$ since $P$ is not elliptic
by assumption.
This is a continuous function in $\R^n$ and it was already proved in
\cite[Thm. 18]{BJJ} that
\beqsn
\emptyset\not=\WF_{(\omega)}^P(u)\subseteq S.
\eeqsn

Let us now prove the other inclusion.
Fix $(x_0,\xi_0)\in S$ and assume by contradiction that
$(x_0,\xi_0)\notin\WF_{(\omega)}^P(u)$.
Then, by Corollary \ref{corWFB}, there exist a neighborhood $U$ of $x_0$,
an open conic neighborhood $\Gamma$ of $\xi_0$ and
$\psi\in\D_{\{\bar{\sigma}\}}(\R^n)$ with $\psi\equiv1$ in $U$ such that
\eqref{BBB3} is satisfied.

Set then
\beqsn\phi_k(y)=\psi\left(\frac yk +x_k\right)\phi(y).
\eeqsn

Since $\sigma(t)=o(\bar{\sigma}(t))$ by assumption, we have that
$\D_{\{\bar{\sigma}\}}(\R^n)\subseteq\D_{(\sigma)}(\R^n)$ by
\cite[Prop. 4.7]{BMT} and therefore $\{\phi_k\}_{k\in\N}$ is a bounded sequence
in $\D_{(\sigma)}(\R^n)$, taking into account that
$\supp\phi_k\subseteq\supp\phi$ for all $k\in\N$.
By \cite[Prop. 3.4]{BMT}, for each $h\in\N$ there is  $C_h>0$ such that
\beqs
\label{phij}
|\widehat{\phi}_j(\xi)|\leq C_h e^{-h\sigma(\xi)}
\qquad\forall j\in\N,\,\xi\in\R^n.
\eeqs
Moreover, following \cite{BJJ}, we have
\beqsn
\F(P(D)^N(\psi u))(\xi)=&&
\sum_{j=1}^{+\infty}e^{-\sigma(j^{d/m})}P(\xi)^N
\F\left(\phi_j(j(x-x_j))e^{ij^3\langle x,\theta_j\rangle}\right)\\
=&&\sum_{j=1}^{+\infty}e^{-\sigma(j^{d/m})}P(\xi)^N
j^{-n}\widehat{\phi}_j\left(\frac{\xi-j^3\theta_j}{j}\right)
e^{i\langle x_j,j^3\theta_j-\xi\rangle}.
\eeqsn
If $x_k$ is close to $x_0$ and $k$ is large enough, then $\phi_k=\phi$ and
\beqsn
|\F(P(D)^N(\psi u))(k^3\theta_k)|=&&\bigg|e^{-\sigma(k^{d/m})}k^{-n}
P(k^3\theta_k)^N\\
&&+\sum_{j\not=k}e^{-\sigma(j^{d/m})}j^{-n}P(k^3\theta_k)^N
\widehat{\phi}_j\left(\frac{k^3\theta_k-j^3\theta_j}{j}\right)
e^{i\langle x_j,j^3\theta_j-k^3\theta_k\rangle}\bigg|\\
\geq&&|P(k^3\theta_k)|^N\left(e^{-\sigma(k^{d/m})}k^{-n}
-\sum_{j\not=k}e^{-\sigma(j^{d/m})}j^{-n}C_h
e^{-h\sigma\left(\frac{k^3\theta_k-j^3\theta_j}{j}\right)}\right)\\
\geq&&\delta^Nk^{3Nd}(e^{-\sigma(k^{d/m})}k^{-n}-C'_he^{-h\sigma(k)}),
\eeqsn
for some $C'_h>0$,
because of \eqref{phij}, \eqref{H1} and
\beqsn
|k^3\theta_k-j^3\theta_j|\geq|k^3-j^3|\geq k^2+kj+j^2\geq kj
\qquad\mbox{if} \ k\not=j.
\eeqsn
But for every fixed $h\geq2$ there exists $k_0\in\N$ such that
\beqsn
\frac{\log C'_h}{\sigma(k)}+\frac{\log2}{\sigma(k)}
+\frac{n\log k}{\sigma(k)}+\frac{\sigma(k^{d/m})}{\sigma(k)}\leq h
\qquad\forall k\geq k_0
\eeqsn
since $\log k=o(\sigma(k))$ and $0<d<m$.

Therefore
\beqs
\label{444}
|\F(P(D)^N(\psi u))(k^3\theta_k)|\geq\frac12\delta^N k^{3Nd}k^{-n}
e^{-\sigma(k^{d/m})}
\qquad\forall k\geq k_0.
\eeqs

On the other hand, by \eqref{BBB3}:
\beqs
\label{445}
|\F(P(D)^N(\psi u))(k^3\theta_k)|=|P(k^3\theta_k)|^N
|\widehat{\psi u}(k^3\theta_k)|
\leq C_{h,\ell}e^{h\varphi^*\left(\frac{Nm}{h}\right)}
(1+|k^3\theta_k|)^{-\ell}.
\eeqs

But \eqref{444} and \eqref{445} give a contradiction for $k$ large enough
(see \cite{BJJ} for more details).
Therefore $(x_0,\xi_0)\in\WF_{(\omega)}^P(u)$ and
$S\subseteq\WF_{(\omega)}^P(u)$.

\underline{Roumieu case}:
Take $\phi\in\D_{\{\sigma\}}(\R^n)$ with $\widehat{\phi}(0)=1$; choose, by
Lemma 1.7 of \cite{BMT}, a non-quasianalytic
weight function $\alpha(t)$ such that
$\log t=o(\alpha(t))$ and $\alpha(t)=o(\sigma(t))$ for $t\to+\infty$,
and define, as in (138) of \cite{BJJ},
\beqsn
u(x)=\sum_{k=1}^{+\infty}e^{-\frac{\sigma(k^{d/m})}{\alpha(k^{d/m})}\log k}
\phi(k(x-x_k))
e^{ik^3\langle x,\theta_k\rangle}.
\eeqsn
This is a continuous function in $\R^n$ and it was already proved in
\cite[Thm. 18]{BJJ} that
\beqsn
\emptyset\not=\WF_{\{\omega\}}^P(u)\subseteq S.
\eeqsn
Let us now prove the other inclusion.
Fix $(x_0,\xi_0)\in S$ and assume by contradiction that
$(x_0,\xi_0)\notin\WF_{\{\omega\}}^P(u)$.
Then, by Corollary \ref{corWFR}, there exist a neighborhood $U$ of $x_0$,
an open conic neighborhood $\Gamma$ of $\xi_0$ and
$\psi\in\D_{\{\bar{\sigma}\}}(\R^n)$ with $\psi\equiv1$ in $U$ such that
\eqref{RRR3} is satisfied.

Define, as in the Beurling case,
$\phi_k(y)=\psi\left(\frac yk +x_k\right)\phi(y)\in\D_{\{\sigma\}}(\R^n)$
such that, by \cite[Prop. 3.4]{BMT}, there exists $C,h>0$:
\beqsn
|\widehat{\phi}_j(\xi)|\leq C e^{-\frac1h\sigma(\xi)}
\qquad\forall j\in\N,\,\xi\in\R^n.
\eeqsn
Then, if $x_k$ is close to $x_0$ and $k$ is large enough,
\beqs
\nonumber
|\F(P(D)^N(\psi u))(k^3\theta_k)|=&&
\bigg|e^{-\frac{\sigma(k^{d/m})}{\alpha(k^{d/m})}\log k}k^{-n}
P(k^3\theta_k)^N\\
\nonumber
&&+\sum_{j\not=k}e^{-\frac{\sigma(j^{d/m})}{\alpha(j^{d/m})}\log j}
j^{-n}P(k^3\theta_k)^N
\widehat{\phi}_j\left(\frac{k^3\theta_k-j^3\theta_j}{j}\right)
e^{i\langle x_j,j^3\theta_j-k^3\theta_k\rangle}\bigg|\\
\nonumber
\geq&&|P(k^3\theta_k)^N|\bigg(e^{-\frac{\sigma(k^{d/m})}{\alpha(k^{d/m})}\log k}
k^{-n}\\
\nonumber
&&-\sum_{j\not=k}e^{-\frac{\sigma(j^{d/m})}{\alpha(j^{d/m})}\log j}
j^{-n}C
e^{-\frac1h\sigma\left(\frac{k^3\theta_k-j^3\theta_j}{j}\right)}\bigg)\\
\nonumber
\geq&&\delta^Nk^{3Nd}(e^{-\frac{\sigma(k^{d/m})}{\alpha(k^{d/m})}\log k}
k^{-n}-C'e^{-\frac1h\sigma(k)})\\
\label{437}
\geq&&\frac12\delta^N k^{3Nd}
e^{-\frac{\sigma(k^{d/m})}{\alpha(k^{d/m})}\log k}k^{-n}
\eeqs
if $k$ is large enough, because
\beqsn
\frac{\log(2C')}{\sigma(k)}
+\frac{\sigma(k^{d/m})}{\sigma(k)}\frac{\log k}{\alpha(k^{d/m})}
+\frac{n\log k}{\sigma(k)}\leq\frac1h
\eeqsn
if $k$ is sufficiently large, since $\log k=o(\alpha(k))$,
$\log k=o(\sigma(k))$ and $0<d<m$.

On the other hand, by \eqref{RRR3} there exists $h\in\N$ such that
\beqsn
|\F(P(D)^N(\psi u))(k^3\theta_k)|=|P(k^3\theta_k)|^N
|\widehat{\psi u}(k^3\theta_k)|
\leq C_\ell e^{\frac1h\varphi^*(Nmh)}(1+|k^3\theta_k|)^{-\ell}
\eeqsn
which contradicts \eqref{437} (see \cite{BJJ} for more details).

Therefore $(x_0,\xi_0)\in\WF_{\{\omega\}}^P(u)$ and
$\WF_{\{\omega\}}^P(u)=S$.
\end{proof}

\begin{Ex}
\begin{em}
Let $\Omega$ be an open subset of $\R^n$ and
$P(D)$ a linear partial differential operator with constant
coefficients and of order $m$. By Theorems 4.1 and 4.8 of \cite{AJO}, for $*=(\omega)$ or $\{\omega\}$, we have
\beqs
\label{remdavid1}
\WF_*(u)\subseteq\WF_*(Pu)\cup\Sigma,\qquad u\in\D'(\Omega),
\eeqs
where
\beqsn
\Sigma:=\{(x,\xi)\in\Omega\times\R^n\setminus\{0\}:\ P_m(\xi)=0\}
\eeqsn
is the characteristic set of $P$ (here $P_m$ is the principal
part of $P$).

This implies that if $u$ is a solution of $P(D)u=f$,
then
\beqs
\label{remdavid2}
\WF_*(u)\subseteq\Sigma, \qquad\mbox{for each}\ f\in\E_*(\Omega),
\eeqs
since $\WF_*(f)=\emptyset$ for $f\in\E_*(\Omega)$.

Now, by Theorem 13 of \cite{BJJ}
\beqsn
\WF_*(u)\subseteq\WF_*^P(u)\cup\Sigma,\qquad u\in\D'(\Omega),
\eeqsn
so that we can improve \eqref{remdavid2} saying that
\beqsn
\WF_*(u)\subseteq\Sigma \qquad\mbox{if}\ f\in\E_*^P(\Omega).
\eeqsn
Indeed, if $f\in\E_*^P(\Omega)\setminus\E_*(\Omega)$ then
$\WF_*^P(f)=\emptyset$ and hence
\beqsn
\WF_*(u)\subseteq\WF_*^P(u)\cup\Sigma
=\WF_*^P(Pu)\cup\Sigma=\WF_*^P(f)\cup\Sigma
=\Sigma.
\eeqsn
\end{em}
\end{Ex}

\vspace{3mm}
Let us now prove a result that establishes the
relationship between the wave front set in the Beurling class and in
the Roumieu class. See also \cite[Proposition 4.5]{AJO} and \cite[Proposition 2]{FGJ} for similar results in this setting for the usual wave front set, even for quasianalytic weight functions.
\begin{Prop}
\label{prop2FGJ}
Let $P(D)$ be a hypoelliptic linear partial differential operator of
order $m$ with constant coefficients, $\Omega$ an open subset of $\R^n$ and
$u\in\D'(\Omega)$.
Let $\sigma_0$ and $\omega$ be two
non-quasianalytic weight functions such that,
for $t\to+\infty$, $\sigma_0(t)=o(\omega(t))$ and $\omega(t^\gamma)=
o(t^{1/s})$ for $\gamma$ as in \eqref{gamma} and $s>1$.
Then
\beqsn
\WF_{\{\omega\}}^P(u)=\overline{\bigcup_{\sigma\in S}\WF_{(\sigma)}^P(u)},
\eeqsn
where $S:=\{\sigma$ {\em non-quasianalytic weight function: }
$\sigma_0\leq\sigma=o(\omega)\}$.
\end{Prop}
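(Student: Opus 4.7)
The proof reduces to showing the two set inclusions of the claimed equality.

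The inclusion $\overline{\bigcup_{\sigma \in S}\WF_{(\sigma)}^P(u)} \subseteq \WF_{\{\omega\}}^P(u)$ is the soft one. Since $\WF_{\{\omega\}}^P(u)$ is closed in $\Omega\times(\R^n\setminus\{0\})$ (its complement is open by the very definition), it suffices to prove $\WF_{(\sigma)}^P(u)\subseteq \WF_{\{\omega\}}^P(u)$ for each fixed $\sigma\in S$. Taking $(x_0,\xi_0)\notin \WF_{\{\omega\}}^P(u)$, I would apply Corollary~\ref{corWFR} to obtain a triple $(U,\Gamma,\psi)$ and constants $k\in\N$, $C>0$ with $|\widehat{\psi u}(\xi)|\leq C\,e^{-\omega(|P(\xi)|^{1/m})/k}$ on $\Gamma$. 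Since $\sigma=o(\omega)$, for each $k'\in\N$ there is $C_{k'}>0$ with $kk'\sigma(t)\leq \omega(t)+C_{k'}$ for all $t\geq 0$, so $|\widehat{\psi u}(\xi)|\leq (Ce^{C_{k'}/k})\,e^{-k'\sigma(|P(\xi)|^{1/m})}$ on $\Gamma$. As $\sigma$ inherits the hypothesis of Corollary~\ref{corWFB} (namely $\sigma(t^\gamma)=o(\omega(t^\gamma))=o(t^{1/s})$), that corollary delivers $(x_0,\xi_0)\notin\WF_{(\sigma)}^P(u)$.

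For the reverse inclusion I argue by contrapositive. Assume $(x_0,\xi_0)\notin \overline{\bigcup_{\sigma\in S}\WF_{(\sigma)}^P(u)}$, so there is an open conic product $W=U\times\Gamma$ around $(x_0,\xi_0)$ disjoint from every $\WF_{(\sigma)}^P(u)$, $\sigma\in S$. Writing $\tau(t):=t^{1/s}$ for the auxiliary weight of the corollaries, I fix a single cutoff $\psi\in\D_{\{\tau\}}(\Omega)$ with $\psi\equiv 1$ on a smaller neighborhood of $x_0$ contained in $U$, and a conic subneighborhood $\Gamma_0$ of $\xi_0$ with $\overline{\Gamma_0\cap S^{n-1}}\subset \Gamma$. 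A standard localization argument (using Proposition~\ref{lemma2'} to transfer wave front set information to $\psi u$, and shrinking $\Gamma_0$ if necessary) together with Corollary~\ref{corWFB} applied to each weight $\sigma\in S$ (legitimate since $\sigma(t^\gamma)=o(\omega(t^\gamma))=o(\tau(t))$) then gives, for every $\sigma\in S$ and every $k\in\N$, a constant $C_{\sigma,k}>0$ with
\[
|\widehat{\psi u}(\xi)| \leq C_{\sigma,k}\, e^{-k\sigma(|P(\xi)|^{1/m})},\qquad \xi\in\Gamma_0.
\]
The task is to upgrade this family of Beurling estimates to a single Roumieu-$\omega$ bound $|\widehat{\psi u}(\xi)|\leq C\,e^{-\omega(|P(\xi)|^{1/m})/k_0}$ on $\Gamma_0$ for some $k_0,C>0$, whereupon Corollary~\ref{corWFR} yields $(x_0,\xi_0)\notin\WF_{\{\omega\}}^P(u)$.

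The main obstacle is that each $\sigma\in S$ is strictly $o(\omega)$, so no fixed multiple of $\sigma$ majorizes $\omega$ and the individual Beurling estimates cannot be combined by a direct pointwise comparison. Following the blueprint of \cite[Prop.~4.5]{AJO} and \cite[Prop.~2]{FGJ}, I plan to argue by contradiction. Were the desired Roumieu-$\omega$ bound to fail for every $k_0\in\N$, a diagonal extraction would produce a sequence $\{\xi_n\}\subset\Gamma_0$ with $|\xi_n|\to\infty$ along which $a_n:=-\log|\widehat{\psi u}(\xi_n)|$ satisfies $a_n\leq \omega(t_n)/n$, where $t_n:=|P(\xi_n)|^{1/m}\to\infty$. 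Then, in the spirit of Lemma~1.7 of \cite{BMT} but enriched with the requirement of prescribed values at the sample points $\{t_n\}$, I would construct a single non-quasianalytic weight $\sigma^*$ with $\sigma_0\leq\sigma^*=o(\omega)$ (hence $\sigma^*\in S$) and $\sigma^*(t_n)\geq c\,a_n$ for some $c>0$ and all large $n$. This bounds $a_n/\sigma^*(t_n)$ uniformly, which for $k$ larger than that bound directly violates the Beurling-$\sigma^*$ estimate displayed above. Producing such a $\sigma^*$, simultaneously satisfying the weight-function axioms $(\alpha)$--$(\delta)$, the domination $\sigma_0\leq\sigma^*$, the growth condition $\sigma^*=o(\omega)$ and the prescribed lower bounds at the $t_n$, is the technical heart of the argument.
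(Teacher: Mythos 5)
Your treatment of the easy inclusion $\overline{\bigcup_{\sigma\in S}\WF_{(\sigma)}^P(u)}\subseteq\WF_{\{\omega\}}^P(u)$ coincides with the paper's: fix $(x_0,\xi_0)\notin\WF_{\{\omega\}}^P(u)$, invoke Corollary~\ref{corWFR}, trade the Roumieu-$\omega$ decay for Beurling-$\sigma$ decay using $\sigma=o(\omega)$ and the hypoellipticity of $P$ to absorb the threshold, then conclude by Corollary~\ref{corWFB} and closedness.

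For the hard inclusion your overall plan (derive, for a \emph{single} fixed cutoff $\psi$ on a \emph{single} cone, the Beurling estimate $|\widehat{\psi u}|\leq C_{k,\sigma}e^{-k\sigma(|P(\xi)|^{1/m})}$ for all $\sigma\in S$, then upgrade to a Roumieu-$\omega$ bound by contradiction) matches the paper's, but there are two substantive differences. First, to get the uniform-in-$\sigma$ Beurling estimate the paper does not simply apply Corollary~\ref{corWFB} $\sigma$-by-$\sigma$: since the cutoff $\psi$ and the cone $\Gamma$ appearing in that corollary are only produced existentially, they could a priori vary with $\sigma$. Instead, the paper fixes $\psi\in\D_{\{t^{1/s}\}}(K')$ once and for all, uses Prop.~\ref{lemma2'} to pass to $\psi u$, then applies the H\"ormander cutoffs $\chi_N$, the estimate of \cite[Cor.~10]{BJJ} for $g_N:=\chi_{Nmp}P(D)^N\psi u$ on the fixed closed cone $F$, and finally Lemma~\ref{lemmaBR}. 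Your phrase ``a standard localization argument'' is covering exactly this non-trivial uniformity step and as written is too vague to certify.

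Second, and more important, the final contradiction is handled differently: the paper argues with a fixed $\sigma\in S$ along a subsequence $\{\xi_{k_n}\}$ of the offending points, whereas you propose to manufacture a new weight $\sigma^*\in S$ interpolating the data $(t_n, a_n)$. Your route is plausible and is the one used in the references you cite (\cite[Prop.~2]{FGJ}, \cite[Prop.~4.5]{AJO}); but the construction of $\sigma^*$ --- a function satisfying axioms $(\alpha)$--$(\delta)$, non-quasianalyticity, $\sigma_0\leq\sigma^*$, $\sigma^*=o(\omega)$, and $\sigma^*(t_n)\gtrsim a_n$ --- is genuinely delicate (one must control behaviour between the $t_n$, not just at them), and you explicitly defer it. That deferral is the gap: you have correctly reduced the statement to the existence of $\sigma^*$, but you have not produced it, and without it the contradiction does not close. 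I also remark that the paper's own display in this last step appears to have its final inequality reversed (the chosen subsequence gives $\sigma(t_{k_n})<\tfrac1n\omega(t_{k_n})$, which yields $e^{-\frac1n\omega(t_{k_n})}\leq e^{-\sigma(t_{k_n})}$, not the claimed ``$>$''); a careful writeup of either route should make the $\sigma$-choice, or the $\sigma^*$-construction, fully explicit.
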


\begin{proof}

Let us first prove that
\beqs
\label{I1}
\overline{\bigcup_{\sigma\in S}\WF_{(\sigma)}^P(u)}\subseteq\WF_{\{\omega\}}^P(u).
\eeqs
To do this,  we fix $(x_0,\xi_0)\notin\WF_{\{\omega\}}^P(u)$.
By Corollary \ref{corWFR} there exist a neighborhood $U$ of $x_0$,
an open conic neighborhood $\Gamma$ of $\xi_0$ and
$\psi\in\D_{\{t^{1/s}\}}(\R^n)$ with $\psi\equiv1$ in $U$ such that \eqref{RR3}
is satisfied for some $k_0\in\N$ and $C>0$.

If $\sigma(t)=o(\omega(t))$, then for every $k\in\N$ there exists $t_k>0$
such that
\beqsn
k\sigma(t)\leq\frac{1}{k_0}\omega(t)
\qquad\forall t\geq t_k.
\eeqsn
Since $P$ is hypoelliptic there exists then $R_k>0$ such that
\beqsn
k\sigma(|P(\xi)|^{\frac1m})\leq\frac{1}{k_0}\omega(|P(\xi)|^{\frac1m})
\qquad\forall|\xi|\geq R_k,
\eeqsn
and therefore there exists $C_k>0$ such that, by \eqref{RR3},
\beqsn
|\widehat{\psi u}(\xi)|\leq C
e^{-\frac{1}{k_0}\omega(|P(\xi)|^{\frac1m})}
\leq C_k e^{-k\sigma(|P(\xi)|^{\frac1m})}
\qquad\forall\xi\in\R^n.
\eeqsn
This proves, by Corollary \ref{corWFB}, that $(x_0,\xi_0)\notin
\WF_{(\sigma)}^P(u)$ and hence
\beqsn
\bigcup_{\sigma\in S}\WF_{(\sigma)}^P(u)\subseteq
\WF_{\{\omega\}}^P(u).
\eeqsn

Since the wave front set $\WF_{\{\omega\}}^P(u)$ is always a closed set,
we have the inclusion \eqref{I1}.

Let us prove the other inclusion.
Take $(x_0,\xi_0)\notin\overline{\bigcup_{\sigma\in S}\WF_{(\sigma)}^P(u)}$.
Then there exist a compact neighborhood $K$ of $x_0$ and
a closed conic neighborhood $F$ of $\xi_0$ such that
\beqsn
(K\times F)\cap\overline{\bigcup_{\sigma\in S}\WF_{(\sigma)}^P(u)}
=\emptyset.
\eeqsn
Take $\chi_N\in\D(K)$ with $\chi_N\equiv1$ in a neighborhood $K'\subset K$
of $x_0$ which satisfies \eqref{B9}.
Take then $\psi\in\D_{\{t^{1/s}\}}(K')$. By Proposition \ref{lemma2'}
\beqsn
\WF_{(\sigma)}^P(\psi u)\subseteq\WF_{(\sigma)}^P(u)
\qquad\forall\sigma\in S
\eeqsn
and hence
\beqsn
(K\times F)\cap\WF_{(\sigma)}^P(\psi u)=\emptyset
\qquad\forall\sigma\in S.
\eeqsn
Consider then $g_N:=\chi_{Nmp}P(D)^N\psi u$, for $p$ sufficiently large
so that, by \cite[Cor. 10]{BJJ}:
\beqsn
&&\forall\sigma\in S,\forall k,\ell\in\N\ \exists C_{k,\ell,\sigma}>0\
\mbox{s.t.}\\
&&|\widehat{g}_N(\xi)|\leq  C_{k,\ell,\sigma}
e^{k\varphi^*_\sigma\left(\frac{Nm}{k}\right)}(1+|\xi|)^{-\ell}
\qquad\forall\xi\in F,
\eeqsn
where $\varphi^*_\sigma$ is the Young conjugate of
$\varphi_\sigma(t)=\sigma(e^t)$.

But
\beqsn
g_N=\chi_{Nmp}P(D)^N\psi u=P(D)^N\psi u
\qquad\mbox{in}\ \R^n
\eeqsn
since $\chi_{Nmp}\equiv1$ on $\supp\psi$.
Therefore
\beqsn
|P(\xi)^N\widehat{\psi u}(\xi)|=
|\widehat{g}_N(\xi)|\leq C_{k,\ell,\sigma}
e^{k\varphi^*_\sigma\left(\frac{Nm}{k}\right)}(1+|\xi|)^{-\ell}
\qquad\forall\xi\in F,
\eeqsn
which implies, by Lemma \ref{lemmaBR}:
\beqs
\label{I2}
|\widehat{\psi u}(\xi)|\leq C_{k,\sigma}e^{-k\sigma(|P(\xi)|^{1/m})}
\qquad\forall\xi\in F.
\eeqs

We want to prove that there exists $\bar{k}\in\N$ such that \eqref{RR3} is
satisfied on $F$ (this would imply, by Corollary \ref{corWFR}, that
$(x_0,\xi_0)\notin\WF_{\{\omega\}}^P(u)$).
We argue by contradiction and assume that for every $n\in\N$ there exists
$\xi_n\in F$ such that
\beqsn
|P(\xi_n)|\geq\delta|\xi_n|^d\to+\infty
\eeqsn
and
\beqs
\label{I3}
|\widehat{\psi u}(\xi_n)|\geq ne^{-\frac1n\omega(|P(\xi_n)|^{1/m})}
\qquad\forall n\in\N.
\eeqs

Since $\sigma(t)=o(\omega(t))$, for $\sigma\in S$,
for every $n\in\N$ there exists $k_n\geq n$ such that
\beqsn
\frac{\sigma(|P(\xi_k)|^{1/m})}{\omega(|P(\xi_k)|^{1/m})}<\frac 1n
\qquad\forall k\geq k_n.
\eeqsn
This would imply, together with \eqref{I3}, that
\beqsn
|\widehat{\psi u}(\xi_{k_n})|\geq k_n
e^{-\frac{1}{k_n}\omega(|P(\xi_{k_n})|^{1/m})}
\geq n
e^{-\frac{1}{n}\omega(|P(\xi_{k_n})|^{1/m})}
>n
e^{-\sigma(|P(\xi_{k_n})|^{1/m})},
\eeqsn
contradicting \eqref{I2} for $k=1$.

Therefore $(x_0,\xi_0)\notin\WF_{\{\omega\}}^P(u)$ and the
proposition is proved.
\end{proof}

\section{Operators with constant strength}
We give some applications to linear partial differential operators with variable coefficients with constant strength. We recall, from \cite{H}, the following:
\begin{Def}
\label{defES}
Let $P(D)$ and $Q(D)$ be two
linear partial differential operators with constant coefficients.
We say that $P$ is {\em weaker} than $Q$, and write $P\prec Q$, if
there exists a constant $C>0$ such that
\beqsn
\tilde{P}(\xi)\leq C\tilde{Q}(\xi)\qquad\forall\xi\in\R^n,
\eeqsn
where $\tilde{P}(\xi):=\sqrt{\sum_\alpha|D^\alpha P(\xi)|^2}$.
We say that $P$ and $Q$ are {\em equally strong} if there exists a
constant $C>0$ such that
\beqsn
C^{-1}\tilde{P}(\xi)\leq\tilde{Q}(\xi)\leq C\tilde{P}(\xi)
\qquad\forall \xi\in\R^n.
\eeqsn
\end{Def}

\begin{Rem}
\label{remES}
\begin{em}
If $P(D)$ and $Q(D)$ are equally strong and hypoelliptic, then by
Theorem \ref{thH} it follows that there are two constants $C,C'>0$
such that
\beqsn
&&|P(\xi)|^2\leq C(1+|Q(\xi)|^2)\qquad\forall\xi\in\R^n\\
&&|Q(\xi)|^2\leq C'(1+|P(\xi)|^2)\qquad\forall\xi\in\R^n.
\eeqsn
In particular, $\deg P=\deg Q$.
\end{em}
\end{Rem}

We recall from \cite[Thm. 11.1.9]{H} the following
\begin{Th}
\label{th1119horII}
If $P(D)$ and $Q(D)$ are equally strong and $P(D)$ is hypoelliptic, then
also $Q(D)$ is hypoelliptic. Moreover, if $d_P(\xi)$ and $d_Q(\xi)$
are the distance from $\xi\in\R^n$ to $V(P)$ and $V(Q)$ respectively,
 there exists then a constant $C>0$ such that
\beqs
\label{dp1}
C^{-1}\leq\frac{d_P(\xi)+1}{d_Q(\xi)+1}\leq C,\qquad
\forall\xi\in\R^n.
\eeqs
\end{Th}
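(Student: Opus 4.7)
The plan is to derive the distance comparison \eqref{dp1} and to read off the hypoellipticity of $Q$ from it via condition~(6) of Theorem \ref{thH}. The central tool is the universal algebraic equivalence \eqref{2210R} from Remark \ref{rem22}, which expresses $d_R(\xi)$ in terms of the ratios $|D^\alpha R(\xi)/R(\xi)|$ and holds for every polynomial of degree $\leq m$ without any hypoellipticity hypothesis.

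First, I would show that hypoellipticity of $P$ implies $\tilde{P}(\xi)\sim|P(\xi)|$ as $|\xi|\to+\infty$ in $\R^n$. Indeed, by \eqref{2210R}, $|D^\alpha P(\xi)|\leq(C/d_P(\xi))^{|\alpha|}|P(\xi)|$ for every $\alpha\neq 0$ whenever $P(\xi)\neq 0$; since $d_P(\xi)\to+\infty$ by Theorem \ref{thH}(6), the non-zero order derivatives are negligible in the identity $\tilde{P}(\xi)^2=|P(\xi)|^2+\sum_{\alpha\neq 0}|D^\alpha P(\xi)|^2$. Combined with the equivalence $\tilde{P}\sim\tilde{Q}$ from Definition~\ref{defES}, this yields $\tilde{Q}(\xi)\sim|P(\xi)|\to+\infty$, and in particular $|Q(\xi)|,|D^\alpha Q(\xi)|\leq\tilde{Q}(\xi)\leq C|P(\xi)|$ for $|\xi|$ large.

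The main obstacle is to convert this asymptotic size information on $\tilde{Q}$ into a genuine lower bound on $d_Q(\xi)$. I would proceed by contradiction: if $d_Q(\xi_k)\leq M$ along a sequence $\xi_k\in\R^n$ with $|\xi_k|\to+\infty$, choose $\zeta_k\in V(Q)$ with $|\zeta_k-\xi_k|\leq 2M$. A Taylor expansion of $P$ at $\xi_k$, together with $|D^\alpha P(\xi_k)|=o(|P(\xi_k)|)$ for $\alpha\neq 0$ from the previous step, gives $P(\zeta_k)=P(\xi_k)(1+o(1))$, hence $|P(\zeta_k)|\to+\infty$. On the other hand, the standard polynomial-perturbation inequality $\tilde{R}(\xi+\eta)\leq(1+|\eta|)^m\tilde{R}(\xi)$ (see \cite[Lemma 10.4.3]{H}) yields $\tilde{P}(\zeta_k)\leq C_M\tilde{P}(\xi_k)\sim|P(\xi_k)|$, while $Q(\zeta_k)=0$ reduces $\tilde{Q}(\zeta_k)^2$ to the sum of its higher derivatives; coupled with the equally-strong inequality $\tilde{P}(\zeta_k)\leq C\tilde{Q}(\zeta_k)$ and a second application of \eqref{2210R} to $Q$ at a nearby real point, this forces $|P(\zeta_k)|$ to be strictly smaller than the Taylor lower bound, the required contradiction. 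Therefore $d_Q(\xi)\to+\infty$, and $Q(D)$ is hypoelliptic by Theorem \ref{thH}(6).

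Once both $P$ and $Q$ are known hypoelliptic, the first step also gives $\tilde{Q}\sim|Q|$ asymptotically, so $|P|\sim|Q|$ for large $|\xi|$. The quantitative bound \eqref{dp1} then follows by applying \eqref{2210R} to both polynomials: the sums $\sum_{\alpha\neq 0}|D^\alpha P/P|^{1/|\alpha|}$ and $\sum_{\alpha\neq 0}|D^\alpha Q/Q|^{1/|\alpha|}$ are of the same order because numerators and denominators are comparable in pairs, so $d_P(\xi)\sim d_Q(\xi)$ in the asymptotic regime; a compactness argument on the bounded range then delivers the global estimate \eqref{dp1}.
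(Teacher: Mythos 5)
The paper does not actually prove Theorem~\ref{th1119horII}: it is quoted verbatim from H\"ormander (\cite[Thm.~11.1.9]{H}), whose proof uses a more delicate analysis of the localizations of $P$ and $Q$ than anything available in the present article. Your attempt to reconstruct it from \eqref{2210R} alone has a genuine gap at the crucial step, the contradiction argument. You correctly note that hypoellipticity of $P$ gives $\tilde{P}(\xi)/|P(\xi)|\to1$ and hence $\tilde{Q}(\xi_k)\sim|P(\xi_k)|\to\infty$, and that $|P(\zeta_k)|\sim|P(\xi_k)|$ by Taylor expansion. But from there you assert that $Q(\zeta_k)=0$, $\tilde{P}(\zeta_k)\lesssim\tilde{Q}(\zeta_k)$ and ``a second application of \eqref{2210R} to $Q$ at a nearby real point'' force $|P(\zeta_k)|$ to be small. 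None of these facts bound $|P(\zeta_k)|$ from above: $Q(\zeta_k)=0$ only removes the $\alpha=0$ term from $\tilde{Q}(\zeta_k)^2$, leaving the derivative terms free to be as large as $|P(\xi_k)|^2$; the inequality $\tilde{P}(\zeta_k)\lesssim\tilde{Q}(\zeta_k)$ is then perfectly compatible with $|P(\zeta_k)|\sim|P(\xi_k)|$; and \eqref{2210R} at $\Re\zeta_k$ yields only $\sum_{\alpha\neq0}|D^\alpha Q(\Re\zeta_k)/Q(\Re\zeta_k)|^{1/|\alpha|}\gtrsim1/M$, i.e.\ an \emph{upper} bound on $|Q(\Re\zeta_k)|$ in terms of its derivatives, which contradicts nothing. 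You have no mechanism that converts the vanishing of $Q$ at $\zeta_k$ into smallness of $P$ near $\zeta_k$; that implication \emph{is} essentially the content of the theorem, so the argument is circular in spirit.

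There is a second, independent gap in the final paragraph. From $\tilde P\sim\tilde Q$ and (assuming both hypoelliptic) $|P|\sim|Q|$ you claim that the two sums in \eqref{2210R} ``are of the same order because numerators and denominators are comparable in pairs.'' That pairwise comparison of $|D^\alpha P(\xi)|$ with $|D^\alpha Q(\xi)|$ for each fixed $\alpha\neq0$ does not follow from the comparability of the $\ell^2$-aggregates $\tilde P$ and $\tilde Q$; different multi-indices may carry the mass in $P$ and $Q$, and since the exponents $1/|\alpha|$ in \eqref{2210R} vary with $\alpha$, the two sums can then differ in order of magnitude. So neither the hypoellipticity transfer nor the distance comparison \eqref{dp1} is actually established; the proof would need the finer structural input that H\"ormander obtains from the theory of localizations at infinity.
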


\begin{Lemma}
\label{lemma3A}
Let $P(D)$ and $Q(D)$ be two equally strong linear partial differential operators. Assume that $P$ (and hence $Q$) is hypoelliptic and of order
$m$. Let $c_P$ and $c_Q$ be the constants defined in \eqref{defc}
for $P$ and $Q$ respectively. Then $c_P=c_Q$.
\end{Lemma}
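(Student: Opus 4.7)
The plan is to reduce everything to the comparison of the distance-to-variety functions $d_P$ and $d_Q$ provided by Theorem \ref{th1119horII}, and then read off the constants from the characterization in Theorem \ref{thH}(9).

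First I would observe that, by Theorem \ref{th1119horII} applied to the equally strong hypoelliptic operators $P$ and $Q$, there is a constant $C>0$ with
\beqsn
C^{-1}\leq \frac{d_P(\xi)+1}{d_Q(\xi)+1}\leq C,\qquad \xi\in\R^n.
\eeqsn
Since both $P$ and $Q$ are hypoelliptic, Theorem \ref{thH}(6) gives $d_P(\xi),d_Q(\xi)\to+\infty$ as $|\xi|\to+\infty$, so for $|\xi|$ large enough the $+1$ can be absorbed into a multiplicative constant. This produces constants $c_1,c_2>0$ and $R>0$ with
\beqsn
c_1\, d_P(\xi)\leq d_Q(\xi)\leq c_2\, d_P(\xi),\qquad \xi\in\R^n,\ |\xi|\geq R.
\eeqsn

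Next, I would use Theorem \ref{thH}(9) for $P$: there exists $C_P>0$ such that $d_P(\xi)\geq C_P|\xi|^{c_P}$ for $|\xi|\gg1$. Combining with the lower bound above gives
\beqsn
d_Q(\xi)\geq c_1 C_P|\xi|^{c_P},\qquad |\xi|\gg 1,
\eeqsn
so $c_P$ is an admissible exponent for $Q$ in the sense of \eqref{defc}. Since $c_Q$ is the \emph{largest} rational in $(0,1]$ for which such a bound holds, we conclude $c_Q\geq c_P$.

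Finally, the argument is completely symmetric in $P$ and $Q$: swapping their roles and using the upper bound $d_Q(\xi)\leq c_2 d_P(\xi)$ together with Theorem \ref{thH}(9) for $Q$ yields $c_P\geq c_Q$, hence $c_P=c_Q$. I do not foresee a real obstacle here; the only point that requires a word of care is the passage from the $+1$-shifted inequality \eqref{dp1} to a clean multiplicative comparison of $d_P$ and $d_Q$, which is handled by the hypoellipticity of both operators via Theorem \ref{thH}(6).
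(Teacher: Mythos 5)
Your argument matches the paper's proof almost line by line: both pass from the shifted inequality \eqref{dp1} to a clean two-sided multiplicative comparison of $d_P$ and $d_Q$ using Theorem~\ref{thH}(6), and then plug this into \eqref{defc} and invoke the maximality of $c_P$ and $c_Q$ to get both $c_Q\geq c_P$ and $c_P\geq c_Q$. The reasoning is correct and there is no meaningful difference in approach.
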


\begin{proof}
Let us first remark that condition \eqref{dp1} is equivalent to
\beqs
\label{dp2}
\tilde{C}^{-1}\leq \frac{d_P(\xi)}{d_Q(\xi)}\leq \tilde{C},\qquad
|\xi|\gg1,
\eeqs
for some $\tilde{C}>0$, because of condition $(6)$ of Theorem \ref{thH}.

Then \eqref{defc} implies, for some $C,\bar{C}>0$,
\beqsn
&&|\xi|^{c_P}\leq C^{-1}d_P(\xi)\leq C^{-1}\tilde{C}d_Q(\xi)\qquad\Rightarrow
\quad c_Q\geq c_P\\
&&|\xi|^{c_Q}\leq \bar{C}^{-1}d_Q(\xi)\leq \bar{C}^{-1}
\tilde{C}d_P(\xi)\qquad\Rightarrow
\quad c_P\geq c_Q
\eeqsn
and $c_P=c_Q$.
\end{proof}

\begin{Rem}
\label{remgammap}
\begin{em}
If $P(D)$ and $Q(D)$ are equally strong, hypoelliptic and of order $m$, and
if $\gamma_P$ and $\gamma_Q$ are the constants defined in \eqref{gamma}
for $P$ and $Q$ respectively, then (see Remark \ref{rem22}):
\beqs
\label{gammap}
m\leq\gamma_P,\gamma_Q\leq\frac{m}{c_P}=\frac{m}{c_Q}\,.
\eeqs
\end{em}
\end{Rem}

\begin{Th}
\label{th1119BR}
Let $P(D)$ and $Q(D)$ be two equally strong linear partial differential operators
 and let $\omega$ be a non-quasianalytic weight function.
If $P$ is $*$-hypoelliptic then also $Q$ is $*$-hypoelliptic, for
$*=(\omega)$ or $\{\omega\}$.
\end{Th}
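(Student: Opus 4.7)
The plan is to reduce the statement to the characterizations of $(\omega)$- and $\{\omega\}$-hypoellipticity in terms of the distance function $d(\xi)$ from $\xi$ to the variety $V$, established in Theorems \ref{thhypoB} and \ref{thhypoR}, and then transfer the required estimates from $P$ to $Q$ using the comparison of $d_P$ and $d_Q$ coming from Theorem \ref{th1119horII}.

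First I would observe that by Remark \ref{rem27}, the assumption that $P$ is $*$-hypoelliptic implies that $P$ is classically hypoelliptic. Hence Theorem \ref{th1119horII} applies: $Q(D)$ is classically hypoelliptic, and there is a constant $C>0$ such that $C^{-1}\leq (d_P(\xi)+1)/(d_Q(\xi)+1)\leq C$ for all $\xi\in\R^n$. Since both $d_P(\xi)$ and $d_Q(\xi)$ tend to $+\infty$ as $|\xi|\to+\infty$ by condition $(6)$ of Theorem \ref{thH}, this is equivalent to $\tilde C^{-1}\leq d_P(\xi)/d_Q(\xi)\leq\tilde C$ for $|\xi|\gg 1$, as already noted in the proof of Lemma \ref{lemma3A}.

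In the Beurling case, the hypothesis that $P$ is $(\omega)$-hypoelliptic gives, by $(1)\Rightarrow(3)$ of Theorem \ref{thhypoB}, that $\omega(\xi)/d_P(\xi)\to 0$. Multiplying by $d_P(\xi)/d_Q(\xi)\leq\tilde C$ we deduce $\omega(\xi)/d_Q(\xi)\to 0$, which by $(3)\Rightarrow(1)$ of the same theorem means that $Q$ is $(\omega)$-hypoelliptic. In the Roumieu case, the hypothesis that $P$ is $\{\omega\}$-hypoelliptic gives, by $(1)\Rightarrow(4)$ of Theorem \ref{thhypoR}, constants $c,C>0$ with $\omega(\xi)\leq C d_P(\xi)$ for $|\xi|>c$. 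The same comparison yields $\omega(\xi)\leq C\tilde C\, d_Q(\xi)$ for $|\xi|$ large, and $(4)\Rightarrow(1)$ of Theorem \ref{thhypoR} gives that $Q$ is $\{\omega\}$-hypoelliptic.

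There is no real obstacle: the entire content is already packaged in Theorem \ref{th1119horII} and the equivalent reformulation of \eqref{dp1} as \eqref{dp2}. The only mildly delicate point is to make sure one is using the asymptotic equivalence $d_P\asymp d_Q$ at infinity (where $\omega(\xi)$ matters) rather than the global bound with the additive $+1$, but this is exactly what Lemma \ref{lemma3A}'s argument provides via condition $(6)$ of Theorem \ref{thH}.
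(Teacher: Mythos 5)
Your proof is correct and follows exactly the same route as the paper: reduce to classical hypoellipticity via Remark \ref{rem27}, transfer $d_P\asymp d_Q$ at infinity from Theorem \ref{th1119horII} (as in \eqref{dp2}), and apply the distance-function characterizations of Theorems \ref{thhypoB} and \ref{thhypoR} in the two cases. Nothing is missing.
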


\begin{proof}
From Remark \ref{rem27} we have that $P$ is hypoelliptic.
Therefore also $Q$ is hypoelliptic, by Theorem \ref{th1119horII}, and
\eqref{dp2} is satisfied.

\underline{Beurling case}.
If $P$ is $(\omega)$-hypoelliptic, then by Theorem \ref{thhypoB}
\beqsn
\frac{\omega(\xi)}{d_Q(\xi)}\leq \tilde{C}
\frac{\omega(\xi)}{d_P(\xi)}\longrightarrow0\qquad\mbox{as}\
|\xi|\to+\infty
\eeqsn
and hence also $Q$ is $(\omega)$-hypoelliptic.

\underline{Roumieu case}.
If $P$ is $\{\omega\}$-hypoelliptic, then by Theorem \ref{thhypoR}
\beqsn
\omega(\xi)\leq C d_P(\xi)\leq C\tilde{C}d_Q(\xi),
\qquad|\xi|\gg1
\eeqsn
and hence also $Q$ is $\{\omega\}$-hypoelliptic.
\end{proof}

Now, from  Theorems \ref{thB} and \ref{thR}, and Corollaries \ref{corWFB} and
\ref{corWFR} it is easy to deduce the following:

\begin{Prop}
\label{propES}
Let $P(D)$ and $Q(D)$ be two equally strong  linear partial differential operators. Assume that $P$ (and hence $Q$) is hypoelliptic and of order
$m$.
Let $\omega$ be a non-quasianalytic weight function such that
$\omega(t^\gamma)=o(\sigma(t))$, as $t\to+\infty$, where
$\gamma:=\frac{m}{c_P}=\frac{m}{c_Q}$ as in
\eqref{gammap}
and $\sigma(t)=t^{1/s}$ for some
$s>1$. Let $\Omega$ be an open subset of $\R^n$ and $u\in\D'(\Omega)$.
Then
\beqs
\label{PES1}
\E_*^P(\Omega)=&&\E_*^Q(\Omega)\\
\label{PES2}
\WF_*^P(u)=&&\WF_*^Q(u)
\eeqs
for $*=(\omega)$ or $\{\omega\}$.
\end{Prop}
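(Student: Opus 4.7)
The plan is to reduce both equalities to the Fourier-side characterizations already available from Theorems \ref{thB}, \ref{thR} and Corollaries \ref{corWFB}, \ref{corWFR}. First I note that, by Lemma \ref{lemma3A}, $c_P=c_Q$, so the common value $\gamma:=m/c_P=m/c_Q$ is, by Remark \ref{remgammap}, an upper bound for both $\gamma_P$ and $\gamma_Q$. Consequently the hypothesis $\omega(t^\gamma)=o(\sigma(t))$ suffices to invoke those characterizations for both operators simultaneously, and the whole burden of the proof will be to compare the weights $\omega(|P(\xi)|^{1/m})$ and $\omega(|Q(\xi)|^{1/m})$ that appear on the right-hand sides.

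The analytic core of the argument will be this comparison. From Remark \ref{remES} one has $|P(\xi)|^2\leq C(1+|Q(\xi)|^2)$ and $|Q(\xi)|^2\leq C'(1+|P(\xi)|^2)$; combined with hypoellipticity (so that $|P(\xi)|,|Q(\xi)|\to+\infty$), this yields constants $\kappa_1,\kappa_2>0$ with
\beqsn
\kappa_1|Q(\xi)|^{1/m}\leq|P(\xi)|^{1/m}\leq\kappa_2|Q(\xi)|^{1/m},\qquad|\xi|\gg1.
\eeqsn
Finitely many iterations of property $(\alpha)$ of $\omega$ (applied to each of $\kappa_1,\kappa_2$) then produce constants $A,B>0$ such that
\beqsn
A^{-1}\omega(|Q(\xi)|^{1/m})-B\leq\omega(|P(\xi)|^{1/m})\leq A\omega(|Q(\xi)|^{1/m})+B
\eeqsn
for all $\xi\in\R^n$, the bounded regime being absorbed into $B$.

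With this two-sided estimate in hand, \eqref{PES1} follows at once. In the Beurling case, if $u\in\E_{(\omega)}^P(U)$, I would apply condition $(3)$ of Theorem \ref{thB} with $k$ replaced by $kA$ to get $\psi\in\D_{\{\sigma\}}(\Omega)$, $\psi\equiv1$ near $x_0$, such that $|\widehat{\psi u}(\xi)|\leq C_{kA}e^{-kA\,\omega(|P(\xi)|^{1/m})}$; substituting the lower bound for $\omega(|P(\xi)|^{1/m})$ and absorbing the factor $e^{kAB}$ into the constant one gets $|\widehat{\psi u}(\xi)|\leq C'_k e^{-k\omega(|Q(\xi)|^{1/m})}$, which by the same theorem means $u\in\E_{(\omega)}^Q(U)$. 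The Roumieu version is identical after fixing $k$ and replacing it by $kA$ in the denominator, and the reverse inclusions follow by symmetry.

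The equality \eqref{PES2} is proved by exactly the same mechanism, now applied to condition $(2)$ of Corollary \ref{corWFB} (respectively Corollary \ref{corWFR}): the neighborhood $U$, the conic neighborhood $\Gamma$ of $\xi_0$ and the cut-off $\psi$ produced for $P$ serve for $Q$ as well, once the weight $\omega(|P(\xi)|^{1/m})$ is replaced by $\omega(|Q(\xi)|^{1/m})$ via the comparison above. I expect the main technical obstacle to be purely bookkeeping: checking that only finitely many applications of $(\alpha)$ are needed regardless of $\xi$ (which is guaranteed by the multiplicative bounds $\kappa_1,\kappa_2$), and that this finite number is uniform, so that $A$ and $B$ do not depend on $\xi$. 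No genuinely new idea beyond the already proved Fourier characterizations and the structural facts from Remarks \ref{remES}, \ref{remgammap} appears to be required.
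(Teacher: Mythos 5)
Your proposal is correct and is exactly the argument the paper intends (the paper merely asserts the proposition is ``easy to deduce'' from Theorems~\ref{thB},~\ref{thR} and Corollaries~\ref{corWFB},~\ref{corWFR}): you derive a two-sided multiplicative comparison $\kappa_1|Q(\xi)|^{1/m}\leq|P(\xi)|^{1/m}\leq\kappa_2|Q(\xi)|^{1/m}$ from Remark~\ref{remES} and hypoellipticity, pass it through finitely many applications of property $(\alpha)$ of $\omega$ to compare $\omega(|P(\xi)|^{1/m})$ and $\omega(|Q(\xi)|^{1/m})$, and then transfer the Fourier-decay conditions, noting via Lemma~\ref{lemma3A} and Remark~\ref{remgammap} that the single hypothesis on $\gamma=m/c_P=m/c_Q$ dominates both $\gamma_P$ and $\gamma_Q$. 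The only bookkeeping caveat is that $kA$ need not be an integer, so in the Beurling case one should invoke condition (3) with $\lceil kA\rceil$ and in the Roumieu case replace $k$ by $\lceil kA\rceil$; this is cosmetic and does not affect the argument.
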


Let us now consider linear partial differential operators $P(x,D)$ with variable
coefficients on an open subset $\Omega$ of $\R^n$.
We recall from \cite{H} the following:
\begin{Def}
\label{defCS}
A linear partial differential operator $P(x,D)$, with $C^\infty$
coefficients on an open subset $\Omega$ of $\R^n$,
 is said to have {\em constant strength} in
$\Omega$ if, for every $x_0,y_0\in\Omega$, the differential operators
with constant coefficients $P(x_0,D)$ and $P(y_0,D)$ are equally strong.
\end{Def}

The following result is due to H\"ormander and Taylor (cf.
Theorems 13.4.1, 13.4.2 and 13.4.4 of \cite{H}):
\begin{Th}
\label{thHT}
Let $\Omega$ be an open subset of $\R^n$ and $P(x,D)$ a linear partial
differential operator with coefficients in $C^\infty(\Omega)$.
Assume that $P(x,D)$ has constant strength.
Then the following conditions are equivalent:
\begin{itemize}
\item[(1)]
$P(x,D)$ is hypoelliptic in $\Omega$, i.e.
\beqsn
\singsupp u=\singsupp P(\cdot,D)u, \qquad u\in\D'(\Omega);
\eeqsn
\item[(2)]
$P(x_0,D)$ is hypoelliptic for some $x_0\in\Omega$;
\item[(3)]
$P(x_0,D)$ is hypoelliptic for all $x_0\in\Omega$.
\end{itemize}
Moreover, if one of the above equivalent conditions is satisfied, then
$P(x,D)$ is micro-hypoelliptic in $\Omega$, i.e.
\beqsn
\WF(u)=\WF(P(\cdot,D)u),\qquad u\in\D'(\Omega).
\eeqsn
\end{Th}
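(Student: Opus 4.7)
The plan is to adapt H\"ormander's constant-strength parametrix construction from Chapter 13 of \cite{H}. The equivalences $(2)\Leftrightarrow(3)$ are immediate from the very definition of constant strength: the operators $P(x_0,D)$ for $x_0\in\Omega$ are pairwise equally strong, so by Theorem \ref{th1119horII} hypoellipticity propagates automatically from one point of $\Omega$ to all the others. Hence only $(3)\Rightarrow(1)$ and $(1)\Rightarrow(2)$, plus the micro-hypoellipticity conclusion, require real work.

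For $(3)\Rightarrow(1)$ I would fix an arbitrary $x_0\in\Omega$ and decompose $P(x,D)=P(x_0,D)+Q(x,D)$, where $Q(x,D)=\sum_{|\alpha|\leq m}(a_\alpha(x)-a_\alpha(x_0))D^\alpha$ has coefficients vanishing at $x_0$. The constant-strength hypothesis translates into a uniform bound $\tilde Q(x,\xi)\leq C(x)\tilde P(x_0,\xi)$ with $C(x)\to0$ as $x\to x_0$. Since $P(x_0,D)$ is hypoelliptic, it admits by Theorem \ref{thH}(5) a fundamental solution $E$ with $\singsupp E=\{0\}$. On a sufficiently small neighborhood of $x_0$ I would then build a right parametrix as a Neumann series $\sum_{j\geq0}(-EQ(x,D))^jE$, converging in the appropriate operator topology thanks to $C(x)\to 0$, so that $P(x,D)$ composed with it equals $I$ modulo an operator with progressively smoother kernel. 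Pseudolocality of $E$ yields $\singsupp u\subseteq\singsupp P(x,D)u$, and the reverse inclusion is trivial.

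For $(1)\Rightarrow(2)$ I would argue by contrapositive. If $P(x_0,D)$ is not hypoelliptic at some $x_0$ (equivalently, at all $x_0\in\Omega$, again by constant strength and Theorem \ref{th1119horII}), then characterization $(7)$ of Theorem \ref{thH} produces a sequence $\zeta_k\in V(P(x_0,\cdot))$ with $|\zeta_k|\to\infty$ and $|\Im\zeta_k|$ bounded. The plane waves $e^{i\langle\cdot,\zeta_k\rangle}$ are then annihilated by $P(x_0,D)$ but become arbitrarily oscillatory. Forming a weighted superposition $u=\sum_k \lambda_k\chi_k(x)e^{i\langle x,\zeta_k\rangle}$ with cutoffs $\chi_k$ concentrating near $x_0$, the bound on $\tilde Q(x,\cdot)$ forces $P(x,D)u\in C^\infty$ while $u$ itself fails to be smooth at $x_0$, contradicting $(1)$. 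The subtle step I expect to be the main obstacle is calibrating the weights $\lambda_k$ and the size of the supports of $\chi_k$ so that the commutator terms $Q(x,D)\bigl(\chi_k\,e^{i\langle\cdot,\zeta_k\rangle}\bigr)$ are genuinely absorbed into the smooth part; this is exactly where the vanishing $C(x)\to0$ provides the decisive gain.

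For the final micro-hypoellipticity assertion I would microlocalize the parametrix argument of $(3)\Rightarrow(1)$. Multiplying the fundamental solution $E$ by a conic symbol $\phi(\xi)$ supported around an arbitrary direction $\xi_0$, in the spirit of Lemma \ref{lemma4FGJ}, and localizing spatially by $\psi(x)$ near $x_0$, the hypoellipticity of $P(x_0,D)$ expressed via characterization $(4)$ of Theorem \ref{thH} controls the wave-front content of the parametrix. The Neumann series then inverts $P(x,D)$ microlocally modulo $C^\infty$, yielding $\WF(u)\subseteq\WF(P(\cdot,D)u)$; the reverse inclusion is automatic since $P(x,D)$ is a differential operator. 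Combining gives $\WF(u)=\WF(P(\cdot,D)u)$.
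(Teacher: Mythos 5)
The paper recalls Theorem~\ref{thHT} verbatim from H\"ormander \cite{H} (Theorems 13.4.1, 13.4.2 and 13.4.4) and supplies no proof, so there is no in-paper argument to compare against; I assess your sketch on its own terms. Your treatment of $(2)\Leftrightarrow(3)$ via Theorem~\ref{th1119horII}, and of $(3)\Rightarrow(1)$ together with the micro-hypoellipticity statement via the frozen-coefficient decomposition $P(x,D)=P(x_0,D)+Q(x,D)$, the fundamental solution $E$ of $P(x_0,D)$, and a Neumann-series parametrix that is then localized spatially and conically, is indeed the line H\"ormander follows in Chapter~13, and it is fine.

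The sketch of $(1)\Rightarrow(2)$, however, has a genuine gap. You build $u=\sum_k\lambda_k\chi_k\,e^{i\langle\cdot,\zeta_k\rangle}$ out of null exponentials of $P(x_0,D)$, with $|\Im\zeta_k|$ bounded and $|\zeta_k|\to\infty$, and you identify the only delicate point as absorbing the terms $Q(x,D)\bigl(\chi_k\,e^{i\langle\cdot,\zeta_k\rangle}\bigr)$ into the smooth part, with the constant-strength bound $\tilde Q(x,\xi)\leq C(x)\tilde P(x_0,\xi)$ doing the work. This ignores the Leibniz contribution of the frozen operator itself: since $\chi_k$ is not constant,
\[
P(x_0,D)\bigl(\chi_k\,e^{i\langle\cdot,\zeta_k\rangle}\bigr)
=e^{i\langle\cdot,\zeta_k\rangle}\sum_{\beta\neq0}\frac{1}{\beta!}\,P^{(\beta)}(x_0,\zeta_k)\,D^\beta\chi_k ,
\]
and this does not vanish on the annuli where the cutoffs fall from $1$ to $0$. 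The coefficients $P^{(\beta)}(x_0,\zeta_k)$ grow polynomially in $|\zeta_k|$, the derivatives $D^\beta\chi_k$ blow up as the supports shrink, and the constant-strength hypothesis is of no help here since these terms come from $P(x_0,D)$, not from $Q$. Balancing this growth against the competing demand that $u$ itself not be $C^\infty$ at $x_0$ (which forbids rapid decay of $\lambda_k$) is the real obstruction, and it is left untouched. As written, the argument establishes neither that $P(x,D)u$ is smooth nor that $u$ is not; the direction $(1)\Rightarrow(2)$ still needs a proof.
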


In the case of $\omega$-hypoellipticity, from Theorem \ref{th1119BR} we
immediately obtain that for a linear partial differential operator with
$C^\infty$
coefficients $P(x,D)$ with constant strength, $P(x_0,D)$ is $*$-hypoelliptic
for some $x_0\in\Omega$ if and only if $P(x_0,D)$ is $*$-hypoelliptic for all
$x_0\in\Omega$, where $*=(\omega)$ or $\{\omega\}$.

As a consequence of Theorem \ref{thHT} and Lemma \ref{lemma3A} we have that
if $P(x,D)$ is a hypoelliptic linear partial differential operator
of order $m$ with $C^\infty$ coefficients and of constant strength
in an open subset $\Omega$ of $\R^n$,
there is then a unique constant $c_P\in(0,1]\cap\Q$ satisfying
\eqref{defc}, in the sense that
$c_{P(x_0,D)}=c_{P(x_1,D)}=:c_P$ for all $x_0,x_1\in\Omega$. We can then
uniquely define
\beqs
\label{gammap2}
\gamma_P:=\frac{m}{c_P}\,.
\eeqs

\begin{Cor}
\label{cor1}
Let $\Omega$ be an open subset of $\R^n$ and $u\in\D'(\Omega)$.
Let $P(x,D)$ be a linear partial differential operator with
coefficients in $C^\infty(\Omega)$.
Assume that $P(x,D)$ has constant strength in $\Omega$ and that
$P(x_0,D)$ is hypoelliptic for some $x_0\in\Omega$.
Let $\omega$ be a non-quasianalytic weight function such that
$\omega(t^\gamma)=o(\sigma(t))$, as $t\to+\infty$, where $\gamma=
\gamma_P$ is the
constant defined in \eqref{gammap2} and $\sigma(t)=t^{1/s}$ for some
$s>1$.
Then
\beqsn
\E_*^{P(x,D)}(\Omega)=&&\E_*^{P(x',D)}(\Omega)
\qquad\forall x,x'\in\Omega\\
\WF_*^{P(x,D)}(u)=&&\WF_*^{P(x',D)}(u)\qquad\forall x,x'\in\Omega
\eeqsn
for $*=(\omega)$ or $\{\omega\}$.
\end{Cor}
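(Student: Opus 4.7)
The plan is to reduce everything to Proposition \ref{propES} applied pointwise to the constant-coefficient operators obtained by freezing $x$ and $x'$ in $\Omega$. Fix arbitrary $x, x' \in \Omega$ and regard $P(x,D)$ and $P(x',D)$ as two linear partial differential operators with constant coefficients; by Definition \ref{defCS} (constant strength) they are equally strong, and by Theorem \ref{thHT} applied to the hypothesis that $P(x_0,D)$ is hypoelliptic for some $x_0 \in \Omega$, both $P(x,D)$ and $P(x',D)$ are hypoelliptic. Remark \ref{remES} then ensures they have the same order, which coincides with the order $m$ of $P$.

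The next step is to check that the constant $\gamma$ appearing in the hypothesis $\omega(t^\gamma) = o(\sigma(t))$ is the one required by Proposition \ref{propES}. By Lemma \ref{lemma3A} the constants $c_{P(x,D)}$ and $c_{P(x',D)}$ defined via \eqref{defc} coincide, so by the discussion around \eqref{gammap2} the value $\gamma_P = m/c_P$ is unambiguously defined and equals both $m/c_{P(x,D)}$ and $m/c_{P(x',D)}$. Hence the weight condition stated in the corollary is precisely the weight condition required to invoke Proposition \ref{propES} for the pair $(P(x,D), P(x',D))$.

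With the hypotheses of Proposition \ref{propES} verified, \eqref{PES1} and \eqref{PES2} immediately give
\[
\E_*^{P(x,D)}(\Omega) = \E_*^{P(x',D)}(\Omega), \qquad
\WF_*^{P(x,D)}(u) = \WF_*^{P(x',D)}(u),
\]
for $* = (\omega)$ or $\{\omega\}$, and since $x, x' \in \Omega$ were arbitrary, the corollary follows.

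The proof is essentially a bookkeeping argument; the only conceptual point (and thus the only mild obstacle) is verifying that the constant $\gamma_P$ is well-defined independently of the point at which we freeze the coefficients, so that the weight hypothesis $\omega(t^\gamma) = o(\sigma(t))$ in the corollary really matches the hypothesis required in Proposition \ref{propES} for every freezing. This is handled by Lemma \ref{lemma3A} together with Theorem \ref{thHT}, which together guarantee that both hypoellipticity and the exponent $c_P$ are inherited uniformly on $\Omega$ from any single point.
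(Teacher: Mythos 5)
Your proof is correct and follows exactly the route the paper intends: the paper's own proof of this corollary is the one-liner ``It follows from Theorem \ref{thHT} and Proposition \ref{propES},'' and your write-up simply fills in the bookkeeping (constant strength $\Rightarrow$ equally strong frozen operators, Theorem \ref{thHT} for hypoellipticity at every point, Lemma \ref{lemma3A} and \eqref{gammap2} to pin down a single $\gamma_P$, then Proposition \ref{propES}).
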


\begin{proof}
It follows from Theorem \ref{thHT} and Proposition \ref{propES}.
\end{proof}

\begin{Cor}
\label{cor2}
Let $\Omega$ be an open subset of $\R^n$ and $u\in\D'(\Omega)$.
Let $P(x,D)$ be a linear partial differential operator with
coefficients in $C^\infty(\Omega)$.
Assume that $P(x,D)$ has constant strength in $\Omega$ and that
$P(x_0,D)$ is hypoelliptic for some $x_0\in\Omega$.
Let $\omega$ be a non-quasianalytic weight function such that
$\omega(t^\gamma)=o(\sigma(t))$, as $t\to+\infty$, where $\gamma=
\gamma_P$ is the
constant defined in \eqref{gammap2} and $\sigma(t)=t^{1/s}$ for some
$s>1$.
Then
\beqsn
\WF_*(u)\subseteq\WF_*^{P(x_1,D)}(u)\cup\bigg(\bigcap_{x'\in\Omega}
\Sigma_{x'}\bigg)\qquad\forall x_1\in\Omega,
\eeqsn
where $*=(\omega)$ or $\{\omega\}$ and
\beqsn
\Sigma_{x'}:=\{(x,\xi)\in\Omega\times(\R^n\setminus\{0\}):\
P_{m(x')}(x',\xi)=0\}
\eeqsn
with $P_{m(x')}(x',\xi)$ the principal part of $P(x',\xi)$.
\end{Cor}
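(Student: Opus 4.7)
The plan is to reduce the variable-coefficient statement to the constant-coefficient inclusion
\[
\WF_*(u)\subseteq\WF_*^{P}(u)\cup\Sigma(P)
\]
proved as Theorem~13 of \cite{BJJ} and recalled in the Example just before Proposition~\ref{prop2FGJ}, applied pointwise to each frozen operator $P(x',D)$, and then to identify all the wave front sets $\WF_*^{P(x',D)}(u)$ via Corollary~\ref{cor1}. Concretely, I would fix an arbitrary $x_1\in\Omega$ and, for each $x'\in\Omega$, observe that $P(x',D)$ is a constant-coefficient hypoelliptic operator: constant strength of $P(x,D)$ together with hypoellipticity of $P(x_0,D)$ at one point forces $P(x',D)$ to be hypoelliptic at every $x'\in\Omega$ by Theorem~\ref{thHT} (or directly by Theorem~\ref{th1119horII}). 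Hence the Example applies to $P(x',D)$ and yields
\[
\WF_*(u)\subseteq\WF_*^{P(x',D)}(u)\cup\Sigma_{x'}\qquad\forall x'\in\Omega.
\]

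Next I would invoke Corollary~\ref{cor1} to rewrite each $\WF_*^{P(x',D)}(u)$ as $\WF_*^{P(x_1,D)}(u)$, so that
\[
\WF_*(u)\subseteq\WF_*^{P(x_1,D)}(u)\cup\Sigma_{x'}\qquad\forall x'\in\Omega.
\]
The conclusion then follows from an elementary set-theoretic remark: if $A\subseteq B\cup C_{x'}$ holds for every $x'$, then for any $a\in A\setminus B$ one has $a\in C_{x'}$ for each $x'$, so $a\in\bigcap_{x'\in\Omega}C_{x'}$; hence $A\subseteq B\cup\bigcap_{x'\in\Omega}C_{x'}$. Taking $A=\WF_*(u)$, $B=\WF_*^{P(x_1,D)}(u)$ and $C_{x'}=\Sigma_{x'}$ gives the desired inclusion.

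I do not expect a genuine obstacle here, since all the difficult work has already been absorbed into Corollary~\ref{cor1}. The only point that deserves a careful check is that the assumption $\omega(t^\gamma)=o(\sigma(t))$, with the single exponent $\gamma=\gamma_P$ of \eqref{gammap2}, really does suffice to apply Corollary~\ref{cor1} uniformly in $x'\in\Omega$; this is precisely what Lemma~\ref{lemma3A} and Remark~\ref{remgammap} guarantee, since under constant strength the constants $c_{P(x',D)}$ (and thus $\gamma_{P(x',D)}\leq m/c_{P(x',D)}$) are independent of $x'$.
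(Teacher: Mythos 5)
Your argument is correct and follows the paper's own proof essentially verbatim: both invoke Theorem~13 of \cite{BJJ} for the frozen constant-coefficient operators $P(x',D)$, identify all the iterate wave front sets $\WF_*^{P(x',D)}(u)$ with $\WF_*^{P(x_1,D)}(u)$ via Corollary~\ref{cor1}, and then intersect over $x'$. The only difference is that you spell out the elementary set-theoretic step and the uniformity of $\gamma$ (via Lemma~\ref{lemma3A} and Remark~\ref{remgammap}), which the paper leaves implicit.
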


\begin{proof}
By Theorem 13 of \cite{BJJ} we have that
\beqsn
\WF_*(u)\subseteq&&\WF_*^{P(x',D)}(u)\cup\Sigma_{x'}
\qquad\forall x'\in\Omega\\
=&&\WF_*^{P(x_1,D)}(u)\cup\Sigma_{x'}\qquad\forall x_1,x'\in\Omega
\eeqsn
by Corollary \ref{cor1}. This proves the thesis.
\end{proof}

\begin{Cor}
\label{cor3}
Let $\Omega$ be an open subset of $\R^n$ and $u\in\D'(\Omega)$.
Let $P(x,D)$ be a linear partial differential operator with
coefficients in $C^\infty(\Omega)$.
Assume that $P(x,D)$ has constant strength in $\Omega$ and that
$P(x_0,D)$ is elliptic for some $x_0\in\Omega$.
Let $\omega$ be a non-quasianalytic weight function such that
$\omega(t^\gamma)=o(\sigma(t))$, as $t\to+\infty$, where $\gamma=
\gamma_P$ is the
constant defined in \eqref{gammap2} and $\sigma(t)=t^{1/s}$ for some
$s>1$.

Then $P(x_1,D)$ is $*$-micro-hypoelliptic for all $x_1\in\Omega$, i.e.
\beqsn
\WF_*(u)=\WF_*(P(x_1,D)u)\qquad\forall x_1\in\Omega,
\eeqsn
for $*=(\omega)$ or $\{\omega\}$.
\end{Cor}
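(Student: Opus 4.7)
The plan is to reduce the problem to showing that $P(x_1,D)$ is elliptic as a constant-coefficient operator for every $x_1\in\Omega$, and then apply the classical inclusion $\WF_{*}(u)\subseteq\WF_{*}(Pu)\cup\Sigma$ (recalled in the Example preceding Proposition \ref{prop2FGJ}): an empty characteristic set will force $\WF_{*}(u)\subseteq\WF_{*}(P(x_1,D)u)$, and the trivial opposite inclusion will close the argument.

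First, I would propagate ellipticity from $x_0$ to every $x_1\in\Omega$. By Definition \ref{defCS}, the constant-coefficient operators $P(x_0,D)$ and $P(x_1,D)$ are equally strong; in particular they share the same order $m$ (Remark \ref{remES}). Classical ellipticity of a polynomial of degree $m$ is equivalent to the estimate $|P(x_0,\xi)|\geq\delta|\xi|^m$ for $|\xi|\gg 1$, which by \eqref{H1} and Remark \ref{rem22} means exactly that the constant $c_{P(x_0,D)}$ in \eqref{defc} equals $1$. Since $P(x_0,D)$ is hypoelliptic (ellipticity trivially implies it), Theorem \ref{th1119horII} gives hypoellipticity of $P(x_1,D)$, and Lemma \ref{lemma3A} then yields $c_{P(x_1,D)}=c_{P(x_0,D)}=1$, so that $P(x_1,D)$ is elliptic for every $x_1\in\Omega$.

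Second, for an elliptic constant-coefficient $P(x_1,D)$ the principal part $P_{m(x_1)}(x_1,\xi)$ does not vanish for $\xi\neq 0$, hence the characteristic set
\[
\Sigma_{x_1}=\{(x,\xi)\in\Omega\times(\R^n\setminus\{0\}):P_{m(x_1)}(x_1,\xi)=0\}
\]
is empty. Applying \eqref{remdavid1} to the constant-coefficient operator $P=P(x_1,D)$ gives
\[
\WF_{*}(u)\subseteq\WF_{*}(P(x_1,D)u)\cup\Sigma_{x_1}=\WF_{*}(P(x_1,D)u).
\]
Combined with the trivial inclusion $\WF_{*}(P(x_1,D)u)\subseteq\WF_{*}(u)$, valid for every linear partial differential operator, this yields the desired equality for all $x_1\in\Omega$. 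The only delicate step is the transfer of ellipticity through constant strength, handled entirely by Lemma \ref{lemma3A}; once this is in place, the conclusion is an immediate consequence of \eqref{remdavid1}, and the weight condition $\omega(t^\gamma)=o(\sigma(t))$ intervenes only implicitly through the framework in which \eqref{remdavid1} is formulated.
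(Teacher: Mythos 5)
Your proof is correct, but it takes a genuinely different route from the one in the paper. The paper's argument works entirely through the iterate wave front set $\WF_*^P$: it invokes Remark~14 of \cite{BJJ} to get $\WF_*(u)=\WF_*^{P(x_0,D)}(u)$ from ellipticity, then Corollary~\ref{cor1} to replace $x_0$ by $x_1$ (this is where the hypothesis $\omega(t^\gamma)=o(\sigma(t))$ actually enters), and finally Remark~12 and Proposition~9 of \cite{BJJ} to pass from $\WF_*^{P(x_1,D)}$ back to the classical $\WF_*(P(x_1,D)u)$. You bypass the iterate machinery entirely: you first propagate ellipticity from $x_0$ to every $x_1\in\Omega$, using the identity $c_{P(x_0,D)}=c_{P(x_1,D)}=1$ supplied by Lemma~\ref{lemma3A} (one could equally well argue from the two-sided comparison of $|P(x_0,\xi)|$ and $|P(x_1,\xi)|$ in Remark~\ref{remES}), so that each characteristic set $\Sigma_{x_1}$ is empty; then \eqref{remdavid1} from \cite{AJO} plus the trivial reverse inclusion $\WF_*(P(x_1,D)u)\subseteq\WF_*(u)$ finish the job. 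A notable by-product of your route is that the weight condition $\omega(t^\gamma)=o(\sigma(t))$ plays no role, since \eqref{remdavid1} in \cite{AJO} does not require it — so you prove the statement under weaker assumptions on $\omega$. What the paper's proof buys in exchange is that it exercises exactly the iterate-wave-front-set framework developed in the preceding sections, which is the raison d'\^etre of this corollary as an application; your argument, while shorter and self-contained, does not touch that machinery.
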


\begin{proof}
By \cite[Rem. 14]{BJJ} we have that
\beqsn
\WF_*(u)=
\WF_*^{P(x_0,D)}(u)
\eeqsn
by the ellipticity of $P(x_0,D)$.

Therefore, by Corollary \ref{cor1} and \cite[Rem. 12 and Prop. 9]{BJJ},
for all $x_1\in\Omega$:
\beqsn
\WF_*(u)=&&\WF_*^{P(x_0,D)}(u)=\WF_*^{P(x_1,D)}(u)\\
=&&\WF_*^{P(x_1,D)}(P(x_1,D)u)\subseteq\WF_*(P(x_1,D)u)
\subseteq\WF_*(u),
\eeqsn
and hence the thesis.
\end{proof}

The following corollary can be proved directly with the results obtained in \cite{FGJ}, but we present it here as a consequence of the previous theorems.
\begin{Cor}
\label{cor4}
Let $\Omega$ be an open subset of $\R^n$ and $u\in\D'(\Omega)$.
Let $P(x,D)$ be a linear partial differential operator with
coefficients in $\E_{\{\sigma\}}(\Omega)$.
Assume that $P(x,D)$ has constant strength in $\Omega$ and that
$P(x_0,D)$ is elliptic for some $x_0\in\Omega$.
Let $\omega$ be a non-quasianalytic weight function such that
$\omega(t^\gamma)=o(t^{1/s})$, for $\gamma=\gamma_P$ as in
\eqref{gammap2} and $s>1$, and such that $\omega(t)=o(\sigma(t))$ for a
non-quasianalytic weight function $\sigma$ satisfying, for some $c>0$,
\beqs
\label{strongweight}
\int_1^{+\infty}\frac{\sigma(ty)}{t^2}dt\leq c\sigma(t)+c
\qquad y>0.
\eeqs

Then
\beqs
\label{star410}
\WF_*(u)=\WF_*(P(\cdot,D)u)=\WF_*(P(x_1,D)u)\qquad\forall x_1\in\Omega,
\eeqs
for $*=(\omega)$ or $\{\omega\}$.
\end{Cor}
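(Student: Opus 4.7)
The plan is to split the statement into two equalities and deduce each from the machinery already developed.

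The second equality $\WF_*(u)=\WF_*(P(x_1,D)u)$ for every $x_1\in\Omega$ is immediate from Corollary \ref{cor3}: the hypothesis that $P(x_0,D)$ is elliptic at some $x_0\in\Omega$, together with the constant strength of $P(x,D)$, is exactly what is required there.

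For the first equality $\WF_*(u)=\WF_*(P(\cdot,D)u)$ I would proceed by two inclusions. The easier direction, $\WF_*(P(\cdot,D)u)\subseteq\WF_*(u)$, rests on the fact that multiplication by coefficients in $\E_{\{\sigma\}}(\Omega)$ does not enlarge the $*$-wave front set (under $\omega(t)=o(\sigma(t))$), and that differentiation only contributes polynomial growth which is absorbed by the exponential Fourier decay characterizing $\WF_*$. Concretely, pick $(x_0,\xi_0)\notin\WF_*(u)$, take a cutoff $\varphi\in\D_{\{\sigma\}}(\Omega)$ with $\varphi\equiv1$ near $x_0$ satisfying the Fourier estimate in a conic neighborhood $\Gamma$ of $\xi_0$, and check using the symbolic calculus of \cite{FGJ} (available thanks to the strong non-quasianalyticity \eqref{strongweight} of $\sigma$) that $\widehat{\varphi P(\cdot,D)u}$ inherits the same decay on a slightly smaller conic neighborhood.

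For the reverse inclusion $\WF_*(u)\subseteq\WF_*(P(\cdot,D)u)$, I would argue by contraposition and use the previously established second equality to reduce to constant coefficients. Assume $(x_1,\xi_0)\notin\WF_*(P(\cdot,D)u)$; it suffices to show $(x_1,\xi_0)\notin\WF_*(P(x_1,D)u)$, and then invoke Corollary \ref{cor3}. From the identity
\begin{equation*}
P(x_1,D)u = P(\cdot,D)u - \sum_{|\alpha|\leq m}\bigl[a_\alpha(x)-a_\alpha(x_1)\bigr] D^\alpha u,
\end{equation*}
the first term on the right-hand side is already $*$-regular at $(x_1,\xi_0)$. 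For the residual sum, the coefficients $a_\alpha(x)-a_\alpha(x_1)$ lie in $\E_{\{\sigma\}}(\Omega)$ and vanish at $x_1$; a Taylor expansion of order one writes them as $\sum_i (x-x_1)_i b_{\alpha,i}(x)$ with $b_{\alpha,i}\in\E_{\{\sigma\}}(\Omega)$. Localizing with $\psi\in\D_{\{\sigma\}}(\Omega)$ supported on a sufficiently small neighborhood of $x_1$, the residual can then be controlled by a Neumann-type iteration based on the pseudodifferential parametrix for the \emph{constant-coefficient} elliptic operator $P(x_1,D)$; each iteration gains a small factor thanks to the vanishing of the coefficients at $x_1$, and the series converges inside the $\E_{\{\sigma\}}$-calculus of \cite{FGJ}.

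The main obstacle is this reverse inclusion: the perturbation $P(\cdot,D)-P(x_1,D)$ has the \emph{same} order $m$ as $P$, so it cannot be dismissed as a lower-order term. The proof must exploit simultaneously (i) the ellipticity of the frozen operator $P(x_1,D)$, which provides a local parametrix; (ii) the vanishing of the perturbation's coefficients at $x_1$, which shrinks the operator norm after localization; and (iii) the strong non-quasianalyticity condition \eqref{strongweight} on $\sigma$, which is indispensable to make the $\omega$-pseudodifferential calculus, and hence the Neumann iteration, converge in the ultradifferentiable setting. Once these three ingredients are combined, Corollary \ref{cor3} closes the argument by transferring the conclusion from $P(x_1,D)u$ back to $u$.
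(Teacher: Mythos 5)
Your treatment of the second equality $\WF_*(u)=\WF_*(P(x_1,D)u)$ matches the paper exactly: it is an immediate application of Corollary~\ref{cor3}. For the first equality $\WF_*(u)=\WF_*(P(\cdot,D)u)$, however, the paper does not reprove anything; it simply cites Theorem~3 of \cite{FGJ}, which is precisely the $\omega$-micro-hypoellipticity theorem for variable-coefficient operators of constant strength with an elliptic frozen operator, established under the hypotheses $\omega(t)=o(\sigma(t))$ and the strong non-quasianalyticity condition~\eqref{strongweight}. The entire remaining step in the paper is then the trivial observation that two things each equal to $\WF_*(u)$ are equal to each other.

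What you propose for the first equality is, in effect, a re-derivation of that cited theorem: pseudolocality of the $\E_{\{\sigma\}}$-calculus for one inclusion, and a Neumann iteration against the constant-coefficient parametrix for the other, using the vanishing of $P(\cdot,D)-P(x_1,D)$ at $x_1$ and the strong non-quasianalyticity of $\sigma$ to make the iteration converge. Your identification of the three key ingredients is accurate and this is indeed the standard strategy behind Theorem~3 of \cite{FGJ} (and its classical $C^\infty$ ancestor in H\"ormander's Chapter~13). But the convergence of the Neumann series inside the ultradifferentiable symbol class is exactly the technical core of the cited theorem, and your sketch leaves it as an assertion rather than an argument; the relevant symbol estimates and the control of the remainders in the $\E_{\{\sigma\}}$-calculus are substantial and are not reproduced. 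So the logical content is right, but the proof as written is incomplete unless you either carry out those estimates or, as the paper does, invoke \cite[Theorem~3]{FGJ} directly --- which is both simpler and complete.
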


\begin{proof}
By Theorem 3 of \cite{FGJ} we have that
\beqsn
\WF_*(u)=\WF_*(P(\cdot,D)u).
\eeqsn
By Corollary \ref{cor3} we have
\beqsn
\WF_*(u)=\WF_*(P(x_1,D)u)\qquad\forall x_1\in\Omega.
\eeqsn
Therefore also
\beqsn
\WF_*(P(\cdot,D)u)=\WF_*(P(x_1,D)u)\qquad\forall x_1\in\Omega
\eeqsn
is valid.
\end{proof}

\begin{Ex}
\label{exdavid}
\begin{em}
Let $q(\xi)\geq0$ be a hypoelliptic polynomial, $h,m,m'\in\N_0$,
$m>m'$, and consider the operator $P(x,D)$ with symbol
\beqsn
p(x,\xi):=|x|^{2h}q(\xi)^m+q(\xi)^{m'}.
\eeqsn

It was proved in \cite[Ex. 3.6]{FGJ1} that
there exist $0\leq\delta<\rho\leq1$
such that, for $a<d:=\rho-\delta<1$, $a/d<r<1$, $\sigma(t)=t^r$ and
$\omega(t)=o(t^a)$:
\beqs
\label{ith34}
|p(x,\xi)|\geq c e^{-\omega(\xi)}
\qquad\forall x\in\R^n,\ |\xi|\gg1
\eeqs
for some $c>0$, and
\beqs
\label{iith34}
\ |D_x^\alpha D_\xi^\beta p(x,\xi)|\leq C^{|\alpha|+|\beta|}\beta!
e^{\frac1k\varphi^*_\sigma(|\alpha|k)}
|p(x,\xi)|(1+|\xi|)^{-\rho|\beta|+\delta|\alpha|}\qquad
\forall x\in\R^n, |\xi|\gg1
\eeqs
for some $C>0$ and $k\in\N$.

By \cite[Thm. 3.4]{FGJ1} and \cite[Thm. 2]{FGJ} we have
\beqsn
\WF_{(\omega)}(u)=\WF_{(\omega)}(P(\cdot,D)u),
\qquad u\in\E'_{(\omega)}(\Omega).
\eeqsn

Analogously, since \eqref{ith34} and \eqref{iith34} hold also for every fixed
$x_1\in\Omega$, we have that
\beqsn
\WF_{(\omega)}(u)=\WF_{(\omega)}(P(x_1,D)u),
\qquad
\forall x_1\in\Omega,
\eeqsn and hence
\eqref{star410} holds for all
$u\in\E'_{(\omega)}(\Omega)$, and hence for all $u\in\E'(\Omega)$.

Note that $P$ is not of constant strength, and that $P$ is elliptic in
some $x_0\in\Omega$ if and only if $q(\xi)$ is elliptic.
\end{em}
\end{Ex}

\begin{Ex}
\label{exrodino}
\begin{em}
Let us consider, for $m\geq1$, the operator
\beqsn
P(x,y,D_x,D_y)=D_x^{2m}+x^{2h}D_y^{2m+2}.
\eeqsn
It was proved in \cite[Thm. 3.1]{R2} that if $h>m$
and $\omega(t)=t^{1/s}$, with $s\geq1+\frac1m$, then
\beqs
\label{E1}
\WF_{\{\omega\}}(u)=\WF_{\{\omega\}}(P(\cdot,D)u),
\qquad\forall u\in\D'(\R^2).
\eeqs

However, for fixed $(x_1,y_1)=(0,y_1)$ the operator
$P(0,y_1,D_x,D_y)=D_x^{2m}$ is not $\{\omega\}$-hypo\-el\-lip\-tic, since
every $f(x,y)=f(y)\in\D'(\R)\setminus\E_{\{\omega\}}(\R)$ solves
$P(0,y_1,D_x,D_y)f=0$, and hence it is not $\{\omega\}$-microhypoelliptic
by \cite[Cor. 11]{BJJ}:
\beqsn
\WF_{\{\omega\}}(P(0,y_1,D_x,D_y)f)\not\subseteq\WF_{\{\omega\}}(f).
\eeqsn
In particular, by \eqref{E1}:
\beqsn
\WF_{\{\omega\}}(P(0,y_1,D_x,D_y)f)\not\subseteq\WF_{\{\omega\}}(P(\cdot,D)f).
\eeqsn

Note that $P$ does not have constant strength and $P(x_0,y_0,D_x,D_y)$ is
not elliptic for any $(x_0,y_0)\in\R^2$.
\end{em}
\end{Ex}

\vspace{3mm}
{\bf Acknowledgements:} 
The authors were partially supported by FAR2011 (Universit\`a di Ferrara) and "Fondi per le necessit\`a di base della ricerca" 2012 and 2013 (Universit\`a di Ferrara). 
The first author is member of the
Gruppo Nazionale per l'Analisi Matematica, la Probabilit\`a e le loro
Applicazioni (GNAMPA) of the Instituto Nazionale di Alta Matematica (INdAM).
The research of the second author was partially supported by MINECO of Spain, Project MTM2013-43540-P and by Programa de Apoyo a la Investigaci\'on y Desarrollo de la UPV, PAID-06-12.


\begin{thebibliography}{999}

\bibitem{AJO}
A.A.~Albanese, D.~Jornet, A.~Oliaro, {\em Quasianalytic wave front sets
for solutions of linear partial differential operators}, Integr. Equ.
Oper. Theory { 66}  (2010), 153-181.

\bibitem{BJ}
C.~Boiti, D.~Jornet, {\em The problem of iterates in some classes of
ultradifferentiable functions}, to appear in ``Operator Theory:
Advances and Applications'', Birkhauser, Basel.

\bibitem{BJJ}
C.~Boiti, D.~Jornet, J.~Juan-Huguet, {\em Wave front set with
respect to the iterates of an operator with constant coefficients},
Abstr. Appl. Anal. { 2014}, Article ID 438716 (2014), pp. 1-17,
http://dx.doi.org/10.1155/2014/438716

\bibitem{BCM} P.~Bolley, J.~Camus,
C.~Mattera, {\em Analyticit\'e microlocale et it\'er\'es d'operateurs
hypoelliptiques}, S\'eminaire Goulaouic-Schwartz, 1978-79, Exp N.13,
\'Ecole Polytech., Palaiseau.

\bibitem{BFM} J.~Bonet, C.~Fern\'andez, R.~Meise, {\em Characterization
of the $\omega$-hypoelliptic convolution operators on
ultradistributions}, Ann. Acad. Sci. Fenn. Math. { 25} (2000), 261-284


\bibitem{bonet_meise_melikhov2007a} J. Bonet, R. Meise, S.N. Melikhov,
{\em A comparison of two different ways of define classes of
ultradifferentiable functions}, Bull. Belg. Math. Soc. Simon Stevin 14 (2007),
425-444.

\bibitem{BMT} R.W.~Braun, R.~Meise, B.A.~Taylor,
{\em Ultradifferentiable functions and Fourier analysis}, Result. Math.
{ 17} (1990), 206--237.

\bibitem{FGJ1} C. Fern\'andez, A. Galbis, D. Jornet,
{\em $\omega$-hypoelliptic differential operators of
constant strength}, J. Math. Anal. Appl. { 297} (2004), 561-576.

 \bibitem{FGJ} C. Fern\'andez, A. Galbis, D. Jornet,
{\em Pseudodifferential operators of Beurling type and the wave
front set}, J. Math. Anal. Appl. {340} (2008), 1153-1170.

\bibitem{Hint} L.~H\"ormander,
{\em On Interior Regularity of the Solutions of Partial
Differential Equations}, Comm. Pure Appl. Math. Vol. XI (1958), 197-218.

\bibitem{Huniq} L.~H\"ormander, {\em Uniqueness theorems
and wave front sets for solutions of linear partial differential equations
with analytic coefficients}, Comm. Pure Appl. Math. {24} (1971), 671--704.

\bibitem{H1} L. H\"ormander, {\em The Analysis of Linear Partial
Differential Operators I}, Springer-Verlag, Berlin (1990).

\bibitem{H} L. H\"ormander, {\em The Analysis of Linear Partial
Differential Operators II}, Springer-Verlag, Berlin (1983).

\bibitem{juan-huguet2010iterates} J.~Juan-Huguet,~{\em Iterates and
Hypoellipticity of Partial Differential Operators on Non-Quasianalytic
Classes}, Integr. Equ. Oper. Theory {68} (2010), 263-286.


\bibitem{J} J.~Juan-Huguet, {\em A Paley-Wiener type theorem for
generalized non-quasianalytic classes}, Studia Math. { 208}, n.1 (2012),
31-46.


\bibitem{K1} H. Komatsu, {\em A characterization of
real analytic functions}, Proc. Japan Acad. {36} (1960), 90-93.


\bibitem{KN} T. Kotake, M.S. Narasimhan,
{\em Regularity theorems for fractional powers of
a linear elliptic operator}, Bull. Soc. Math. France {90} (1962), 449-471.

\bibitem{langenbruch1979P} M. Langenbruch,
{\em P-Funktionale und Randwerte zu hypoelliptischen Differentialoperatoren},
Math. Ann. 239(1) (1979), 55-74.


\bibitem{langenbruch1979Fortsetzung} M. Langenbruch,
{\em Fortsetzung von Randwerten zu hypoelliptischen Differentialoperatoren
und partielle Differentialgleichungen}, J. Reine Angew.
Math. 311/312 (1979), 57-79.


\bibitem{langenbruch1985on} M. Langenbruch,
{\em On the functional dimension of solution spaces of hypoelliptic
partial differential operators}, Math. Ann. 272 (1985), 217-229.


\bibitem{langenbruch1987bases} M. Langenbruch,
{\em Bases in solution sheaves of systems of partial differential
equations}, J. Reine Angew. Math. 373  (1987), 1-36.

\bibitem{metivier1978propiete} G. M\'etivier, {\em Propri\'et\'e des
it\'er\'es et ellipticit\'e}, Comm. Partial Differential Equations 3 {(9)}
(1978), 827-876.

\bibitem{NZ} E. Newberger, Z. Zielezny, {\em The growth
of hypoelliptic polynomials and Gevrey classes}, Proc. Amer. Math. Soc.
{39}, n. 3 (1973), 547-552.

\bibitem{R2} L.~Rodino, {\em On the problem of the hypoellipticity
of the linear partial differential equations}, Developments in Partial
Differential Equations and Applications to Mathematical
Physics, Edited by G.~Buttazzo, Plenum Press, New York, 1992.

\bibitem{R} L.~Rodino, {\em Linear partial differential operators in
Gevrey spaces}, World Scientific, 1993.

\bibitem{Z1} L. Zanghirati, {\em Iterates of a class of hypoelliptic
operators and generalized Gevrey classes}, Boll. U.M.I. Suppl. 1 (1980),
177-195.


\end{thebibliography}
\end{document}